\numberwithin{equation}{section}
\numberwithin{figure}{section}
\newtheorem {theorem}{Theorem}[section]
\newtheorem {proposition}[theorem]{Proposition}
\newtheorem {lemma}[theorem]{Lemma}
\newtheorem {corollary}[theorem]{Corollary}
\theoremstyle{definition}
\theoremstyle{remark}
\newtheorem {remark}[theorem]{Remark}
\newcommand{\Beta}{\text{\rm Beta}}
\newcommand{\eqdistr}{\stackrel{d}{=}}
\newcommand{\Vol}{\operatorname{Vol}}
\DeclareMathOperator*{\argmin}{arg\,min}
\renewcommand{\Re}{\operatorname{Re}}  
\newcommand{\ii}{{\rm{i}}}
\def\ba{\begin{array}}
\def\ea{\end{array}}
\def\bea{\begin{eqnarray} \label}
\def\eea{\end{eqnarray}}
\def\be{\begin{equation} \label}
\def\ee{\end{equation}}
\def\bit{\begin{itemize}}
\def\eit{\end{itemize}}
\def\ben{\begin{enumerate}}
\def\een{\end{enumerate}}
\def\lan{\langle}
\def\ran{\rangle}
\def\BB{\mathbb{B}}
\def\CC{\mathbb{C}}
\def\E{\mathbb{E}}
\def\N{\mathbb{N}}
\def\P{\mathbb{P}}
\def\R{\mathbb{R}}
\def\RRd1{\mathbb{R}^{d+1}}
\def\SS{\mathbb{S}}
\def\bS{\mathbb{S}}
\def\b{\beta}
\def\G{\Gamma}
\def\cF{\mathcal{F}}
\def\cH{\mathcal{H}}
\def\cI{\mathcal{I}}
\def\dint{\textup{d}}
\newcommand{\eee}{{\rm e}}
\newcommand{\ind}{\mathbbm{1}}
\newcommand{\eps}{\varepsilon}
\newcommand{\pos}{\mathop{\mathrm{pos}}\nolimits}
\newcommand{\aff}{\mathop{\mathrm{aff}}\nolimits}
\newcommand{\lin}{\mathop{\mathrm{lin}}\nolimits}
\newcommand{\conv}{\mathop{\mathrm{conv}}\nolimits}
\newcommand{\dd}{{\rm d}}
\newcommand{\bsl}{\backslash}
\DeclareMathOperator{\relint}{relint}
\newcommand{\todistr}{\overset{d}{\underset{n\to\infty}\longrightarrow}}
\newcommand{\Mod}[1]{\ (\mathrm{mod}\ #1)}
\begin{document}

\title{\bfseries Beta polytopes and Poisson polyhedra: \\$f$-vectors and angles}

\author{Zakhar Kabluchko, Christoph Th\"ale and Dmitry Zaporozhets}

\date{}

\maketitle

\begin{abstract}
We study random polytopes of the form $[X_1,\ldots,X_n]$ defined as convex hulls of independent and identically distributed random points $X_1,\ldots,X_n$ in $\mathbb{R}^d$ with one of the following densities:
$$
f_{d,\beta} (x) = c_{d,\beta} (1-\|x\|^2)^{\beta}, \qquad \|x\| < 1, \quad \text{(beta distribution, $\beta>-1$)}
$$
or
$$
\tilde f_{d,\beta} (x) = \tilde{c}_{d,\beta} (1+\|x\|^2)^{-\beta}, \qquad x\in\mathbb{R}^d, \quad \text{(beta' distribution, $\beta>d/2$)}.
$$
This setting also includes the uniform distribution on the unit sphere and the standard normal distribution as limiting cases.
We derive exact and asymptotic formulae for the expected number of $k$-faces of $[X_1,\ldots,X_n]$ for arbitrary $k\in\{0,1,\ldots,d-1\}$.  We prove that for any such $k$ this expected number is strictly monotonically increasing with $n$. Also, we compute the expected internal and external angles of these polytopes at faces of every dimension and, more generally, the expected conic intrinsic volumes of their tangent cones. By passing to the large $n$ limit in the beta' case, we compute the expected $f$-vector of the convex hull of Poisson point processes with power-law intensity function.  Using convex duality, we derive exact formulae for the expected number of $k$-faces of the zero cell for a class of isotropic Poisson hyperplane tessellations in $\R^d$. This family includes the zero cell of a classical stationary and isotropic Poisson hyperplane tessellation and the typical cell of a stationary Poisson--Voronoi tessellation as special cases. In addition, we prove precise limit theorems for this $f$-vector in the high-dimensional regime, as $d\to\infty$. Finally, we relate the $d$-dimensional beta and beta' distributions to the generalized Pareto distributions known in  extreme-value theory.

\noindent
\bigskip
\\
{\bf Keywords}. Beta distribution, beta' distribution, Blaschke--Petkantschin formula, conic intrinsic volume, convex hull, $f$-vector, random polytope,  Poisson hyperplane tessellation, Poisson point process, spherical integral geometry, solid angle, zero cell.\\
{\bf MSC 2010}. Primary: 52A22, 60D05; Secondary: 52A55, 52B11, 60F05.
\end{abstract}

\tableofcontents

\section{Introduction and main results}

\subsection{Introduction}
Let $X_1,\ldots,X_n$ be random points chosen independently and uniformly from the unit sphere $\bS^{d-1}$ or the unit ball $\BB^{d}$. Their convex hull $[X_1,\ldots,X_n]$ is a random polytope; see Figure~\ref{fig:beta_polytope}. What is the expected number of vertices, edges, or, more generally, $k$-dimensional faces of this random polytope? What are the expected internal and external angles of this polytope? Does the expected number of $k$-dimensional faces increase if we add one more point to the sample?

In order to address  these questions, it is useful (and probably even necessary) to consider a more general family of distributions including the aforementioned examples as special or limit cases. We say that a random vector in $\R^d$ has a $d$-dimensional \textit{beta distribution} with parameter $\b>-1$ if its Lebesgue density is
\begin{equation}\label{eq:def_f_beta}
f_{d,\b}(x)=c_{d,\beta} \left( 1-\left\| x \right\|^2 \right)^\b\ind_{\{\|x\| <  1\}},\qquad x\in\R^d,\qquad
c_{d,\b}= \frac{ \G\left( \frac{d}{2} + \b + 1 \right) }{ \pi^{ \frac{d}{2} } \G\left( \b+1 \right) }.
\end{equation}
Here, $\|x\| = (x_1^2+\ldots+x_d^2)^{1/2}$ denotes the Euclidean norm of the vector $x= (x_1,\ldots,x_d)\in\R^d$. The uniform distribution on the unit ball $\BB^d$ is recovered by taking $\b=0$, whereas the uniform distribution on the unit sphere $\bS^{d-1}$ is the weak limit of the beta distribution, as $\b\downarrow -1$. Very similar to the beta distributions are the \textit{beta' distributions} with Lebesgue density
\begin{equation}\label{eq:def_f_beta_prime}
\tilde{f}_{d,\b}(x)=\tilde{c}_{d,\b} \left( 1+\left\| x \right\|^2 \right)^{-\b},\qquad
x\in\R^d,\qquad
\tilde{c}_{d,\b}= \frac{ \G\left( \b \right) }{\pi^{ \frac{d}{2} } \G\left( \b - \frac{d}{2} \right) },
\end{equation}
where the parameter $\beta$ should satisfy $\beta>d/2$ to ensure integrability. The standard normal distribution on $\R^d$ can be viewed as a limiting case of both the beta  and beta' family, as $\beta\to +\infty$. In fact, the four $d$-dimensional  distributions mentioned above (i.e.\ the beta distribution, the beta' distribution, the normal distribution and the uniform distribution on the sphere) are characterized by a common underlying property discovered by Ruben and Miles~\cite{ruben_miles}. This characterizing property is also crucial in the present context and will be discussed in more detail below.

\begin{figure}[t]
\begin{center}
\includegraphics[width=0.49\textwidth ]{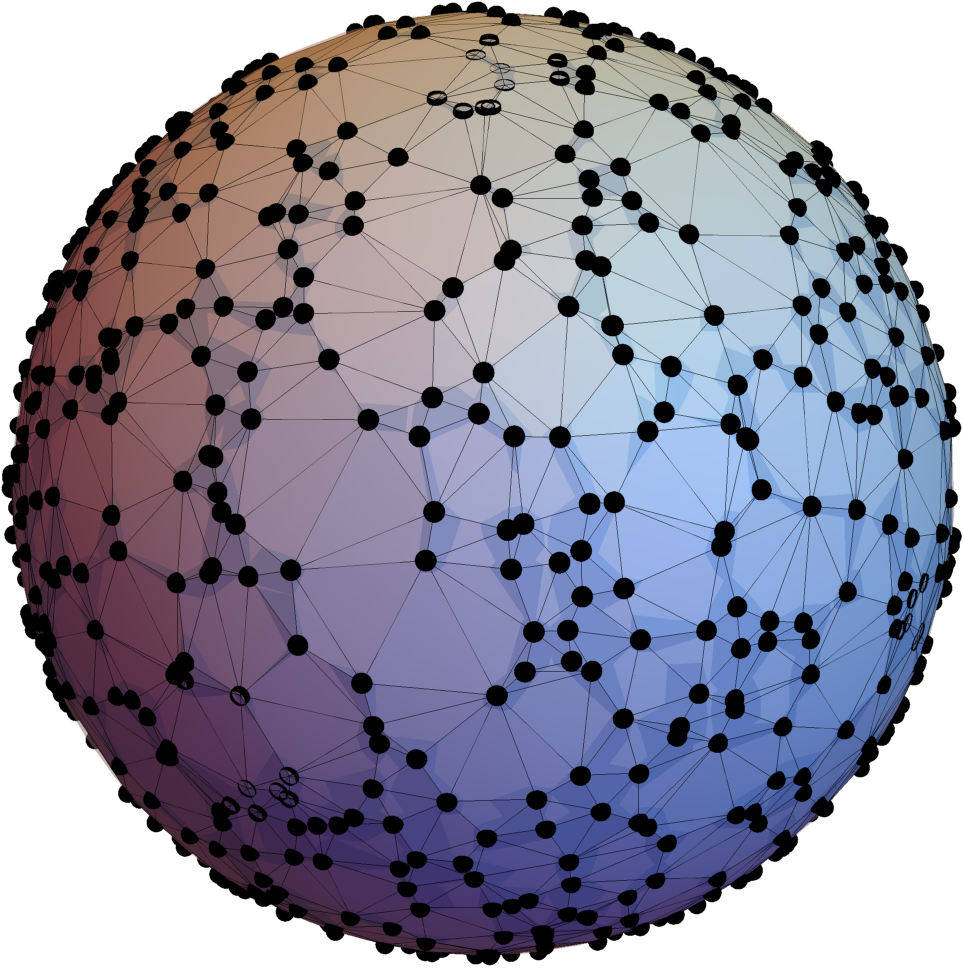}
\includegraphics[width=0.49\textwidth]{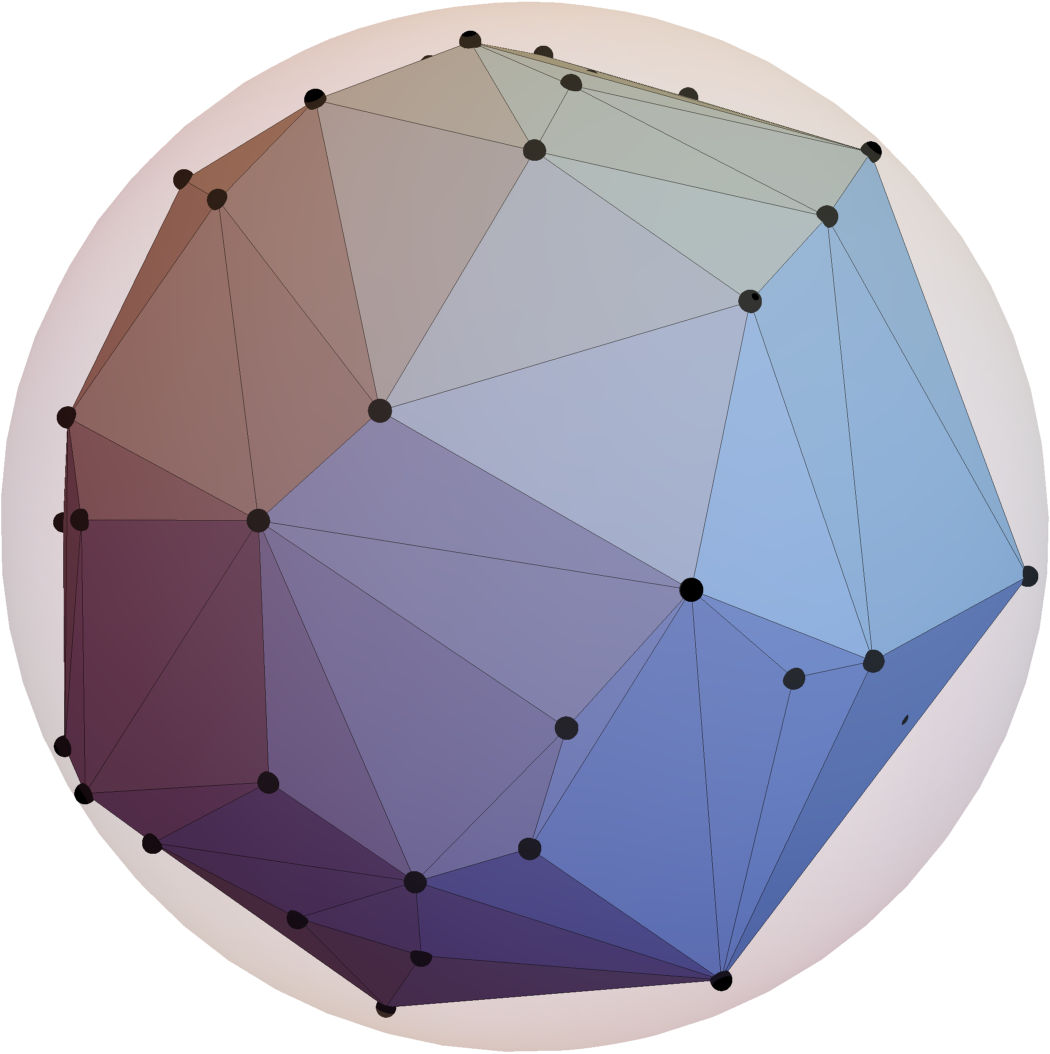}
\end{center}
\caption{
Convex hull of $n=1000$ uniformly distributed points on the sphere (left figure, beta polytope with $\beta=-1$) and the ball (right figure, beta polytope with $\beta=0)$.
}
\label{fig:beta_polytope}
\end{figure}

Convex hulls of $n\geq d+1$ independent random points sampled according to these distributions in $\R^d$ are referred to as \textit{beta} and \textit{beta' polytopes}. Beta and beta' polytopes for the particular case $n=d+1$ (where these polytopes are simplices with probability one) were considered in the works of Miles \cite{miles}, Ruben and Miles \cite{ruben_miles} and, more recently, by Grote, Kabluchko and Th\"ale \cite{beta_simplices}. Asymptotic properties of the beta and beta' polytopes in the general case $n\geq d+1$ were studied by Affentranger \cite{affentranger}, while explicit formulae for some characteristics of these polytopes like the expected intrinsic volumes and the expected number of hyperfaces were derived by Kabluchko, Temesvari and Th\"ale \cite{beta_polytopes}. Let us also point out that the class of beta' polytopes also plays a crucial role in the recent study of spherical convex hulls of random points on half spheres. This connection has been exploited in the works of Bonnet, Grote, Temesvari, Th\"ale, Turchi and Wespi \cite{bonnet_etal} and Kabluchko, Marynych, Temesvari and Th\"ale \cite{convex_hull_sphere}. In this light, the present paper can be regarded as the continuation of our previous works on beta and beta' polytopes. Its main results, which will be presented in Sections~\ref{subsec:MainForBeta} and~\ref{subsec:MainForBetaprime}, can roughly be summarized as follows.

\begin{itemize}
\item[(a)] We provide an explicit formula for the expected number of $k$-dimensional faces of beta and beta' polytopes, for every $k\in\{0,1,\ldots,d-1\}$.
\item[(b)] We prove that the expected number of $k$-dimensional faces strictly increases if new points are added to the sample, again for every $k\in\{0,1,\ldots,d-1\}$.
\item[(c)] We compute the expected external and internal angles of beta and beta' polytopes.
\end{itemize}

In addition, these results have a number of corollaries which are presented in Sections~\ref{subsec:PPP}, \ref{subsec:convex_hulls_half_sphere}, \ref{subsec:PHT}, \ref{subsec:PHT_asympt} and~\ref{subsec:extreme_pareto}. They can be summarized as follows.

\begin{itemize}
\item[(d)] We provide a formula for the expected number of $k$-dimensional faces of the convex hull of a Poisson point process with power-law intensity, for every $k\in\{0,1,\ldots,d-1\}$.
\item[(e)] From (d) we deduce a formula for the expected number of $k$-faces of the zero cell of a Poisson hyperplane tessellation and the typical cell of the Poisson--Voronoi tessellation.
\item[(f)] We provide asymptotic formulae for the expected $f$-vector of these cells in high dimensions, i.e., as $d$ goes to $\infty$.
\item[(g)] We relate the characterizing property of the beta and beta' distributions  which is crucial for obtaining the above results to the properties of the generalized Pareto distributions known in extreme-value theory.
\end{itemize}

As already mentioned above, the standard Gaussian distribution appears as the large $\beta$ limit of both beta and beta' distributions. More concretely, we have the following
\begin{lemma}\label{lem:gauss_limit}
If $X(\beta)$ is a random point in $\R^d$ with density either $f_{d,\beta}$ or $\tilde f_{d, \beta}$, then $\sqrt{2\beta} X(\beta)$ converges weakly to the standard normal distribution on $\R^d$, as $\beta\to +\infty$.
\end{lemma}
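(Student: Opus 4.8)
The plan is to establish the convergence directly at the level of Lebesgue densities and then invoke Scheffé's lemma. Set $Y(\b) := \sqrt{2\b}\, X(\b)$. A linear change of variables (if $X$ has density $f$, then $cX$ has density $y \mapsto c^{-d} f(y/c)$) shows that in the beta case $Y(\b)$ has density
\[
g_\b(y) \;=\; (2\b)^{-d/2}\, c_{d,\b} \left( 1 - \frac{\|y\|^2}{2\b} \right)^{\b} \ind_{\{\|y\| < \sqrt{2\b}\}}, \qquad y \in \R^d ,
\]
and in the beta' case $Y(\b)$ has density
\[
\tilde g_\b(y) \;=\; (2\b)^{-d/2}\, \tilde c_{d,\b} \left( 1 + \frac{\|y\|^2}{2\b} \right)^{-\b}, \qquad y \in \R^d .
\]
I would then fix $y \in \R^d$ and let $\b \to +\infty$: the elementary limits $\bigl(1 - \|y\|^2/(2\b)\bigr)^{\b} \to \eee^{-\|y\|^2/2}$ and $\bigl(1 + \|y\|^2/(2\b)\bigr)^{-\b} \to \eee^{-\|y\|^2/2}$ hold, and the indicator in the first display equals $1$ as soon as $\b > \|y\|^2/2$.

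It remains to identify the limits of the normalizing prefactors, and for this the key tool is Stirling's asymptotics for ratios of Gamma functions. Recalling $c_{d,\b} = \G(\tfrac d2 + \b + 1)/(\pi^{d/2}\,\G(\b+1))$, we have $\G(\tfrac d2 + \b + 1)/\G(\b+1) \sim \b^{d/2}$ as $\b \to +\infty$, so $c_{d,\b} \sim \b^{d/2}/\pi^{d/2}$ and therefore
\[
(2\b)^{-d/2}\, c_{d,\b} \;\longrightarrow\; \frac{1}{(2\pi)^{d/2}}, \qquad \b \to +\infty .
\]
The same computation with $\tilde c_{d,\b} = \G(\b)/(\pi^{d/2}\,\G(\b - \tfrac d2))$ and $\G(\b)/\G(\b - \tfrac d2) \sim \b^{d/2}$ yields $(2\b)^{-d/2}\,\tilde c_{d,\b} \to (2\pi)^{-d/2}$ as well. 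Combining these with the pointwise limits above, both $g_\b(y)$ and $\tilde g_\b(y)$ converge to $(2\pi)^{-d/2}\eee^{-\|y\|^2/2}$ for every $y \in \R^d$.

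Since $g_\b$, $\tilde g_\b$ and the limiting function are all probability densities, Scheffé's lemma upgrades this pointwise convergence to convergence in total variation, which a fortiori implies the asserted weak convergence to the standard Gaussian law on $\R^d$. I do not expect a genuine obstacle in this argument; the only step requiring a modicum of care is the control of the normalizing constants, which is entirely dispatched by Stirling's formula. For completeness one could also give a second, slightly more probabilistic proof avoiding Scheffé: decompose $X(\b)$ into its independent radial part $R(\b) = \|X(\b)\|$ and angular part $X(\b)/R(\b)$, note that the latter is uniformly distributed on $\Sd$ for every $\b$, and check that $2\b\, R(\b)^2$ converges in distribution to a $\chi^2_d$ random variable; together with the analogous polar decomposition of a standard Gaussian vector this gives the claim. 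The density computation above is, however, the shortest route.
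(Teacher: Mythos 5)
Your proof is correct and follows exactly the route the paper indicates: write down the density of $\sqrt{2\beta}\,X(\beta)$, verify its pointwise convergence to the standard Gaussian density (with the normalizing constants handled by the Gamma-ratio asymptotics), and conclude via Scheff\'e's lemma. The details you supply, including the Stirling-type estimates for $c_{d,\beta}$ and $\tilde c_{d,\beta}$, are accurate, so nothing further is needed.
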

\begin{proof}
Write down the density of $\sqrt{2\beta} X(\beta)$, verify that it converges pointwise to the standard normal density and apply Scheff\'e's lemma.
\end{proof}

Most of the results of the present paper can be translated to Gaussian polytopes by taking the limit $\beta\to+\infty$. Since in the Gaussian setting most results are not new and admit simpler and more elegant proofs, see, e.g., \cite{kabluchko_thaele} for the proof of monotonicity, we refrain from considering the Gaussian case here.

\subsection{Main results for beta polytopes}\label{subsec:MainForBeta}
Let $X_1,\ldots,X_n$ be independent and identically distributed (i.i.d.)\ random points in $\R^d$ with beta distribution $f_{d,\beta}$. Their convex hull  is called the beta polytope and will be denoted by
$$
P_{n,d}^\beta := [X_1,\ldots,X_n].
$$
Unless otherwise stated, in all results on beta polytopes the parameters $d\in \N$ and $n\in\N$ satisfy $n\geq d+1$ (so that $P_{n,d}^\beta$ has full dimension $d$ a.s.), while the parameter $\beta$ satisfies $\beta\geq -1$. The value $\beta=-1$ corresponds to the uniform distribution on the unit sphere $\bS^{d-1}$. We are interested in computing  expected values of various characteristics of the beta polytopes $P_{n,d}^\beta$.

Given a polytope $P\subset\R^d$, we denote by $\cF_k(P)$,  where $k\in\{0,\ldots,d\}$, the set of $k$-dimensional faces of $P$ and by $f_k(P)=|\cF_k(P)|$ their total number. Note that the random beta and beta' polytopes considered in the present paper are \textit{simplicial}, that is, all of their faces are simplices, with probability $1$.  If $F$ is a face of $P$, let $\beta(F,P)$ (respectively, $\gamma(F,P)$) be the internal (respectively, external) solid angle at $F$. The normalization is chosen so that the solid angle of the full space is equal to $1$.  For convenience of the reader, we collect the necessary background information from convex and stochastic geometry in Section~\ref{sec:facts}.

\begin{theorem}[Expected $f$-vector]\label{theo:f_vect}
For every $k\in \{0,1,\ldots, d-1\}$, the expected number of $k$-dimensional faces of $P_{n,d}^\beta$ is given by
\begin{equation}\label{eq:f_k_P_main}
\E f_k(P_{n,d}^{\beta})
=
2 \sum_{s=0}^\infty \binom n {d-2s} \binom {d-2s}{k+1} I_{n,d-2s}(2\beta+d) J_{d-2s,k+1}\left(\beta + s + \frac 12\right).
\end{equation}
Here, the quantities $I_{n,k}(\alpha)$  are defined for $n\in\N$, $k\in \{1,\ldots,n\}$ and $\alpha>-1/k$ by the formula
\begin{equation}\label{eq:I_definition}
I_{n,k}(\alpha) := \int_{-1}^{+1}  c_{1, \frac {\alpha k -1}{2}}
(1-t^2)^{\frac {\alpha k - 1}{2}}
\left(\int_{-1}^t c_{1, \frac{\alpha - 1}{2}} (1-s^2)^{\frac{\alpha - 1}{2}}\,\dd s\right)^{n-k} \,\dd t,
\end{equation}
while $J_{m,\ell}(\alpha)$ denotes the expected  internal angle at some $(\ell-1)$-dimensional face of the simplex $[Z_1,\ldots,Z_{m}] \subseteq \R^{m-1}$, where $Z_1,\ldots,Z_m$ are i.i.d.\ points in $\R^{m-1}$ with distribution $f_{m-1,\alpha}$. That is,
\begin{equation}\label{eq:J_definition}
J_{m,\ell}(\alpha) :=  \E\beta([Z_1,\ldots,Z_{\ell}], [Z_1,\ldots,Z_m])
\end{equation}
for $m\in\N$, $\ell\in\{1,\ldots,m\}$ and $\alpha \geq -1$.
\end{theorem}
\begin{remark}
In this paper we shall use the convention that $\binom{a}{b}=0$ whenever $b>a$ or $b<0$. In particular, this implies that the sum in~\eqref{eq:f_k_P_main} contains only finitely many non-zero terms. More concretely, only the terms corresponding to $s\in \{0,1,\ldots\}$ satisfying $d - 2s > k$ appear in the sum. In the trivial case $d=1$ (where the only admissible value of $k$ is $k=0$ and we have $\E f_0(P_{n,1}^\beta) = 2$), it is easy to check that $I_{n,1} (\alpha) = 1/n$ for all $\alpha>-1$, but the value $I_{n,1} (-1)$ is not well-defined. We use the natural convention that $I_{n,1}(-1):= -1/n$.
\end{remark}
\begin{remark}\label{rem:SpecialCasesMainResultBeta}
For faces of dimension $k=d-1$ and $k=d-2$ the expression in~\eqref{eq:f_k_P_main} simplifies considerably, since the only non-vanishing term is the one with $s=0$, and we get
\begin{align*}
\E f_{d-1}(P_{n,d}^{\beta})
=
2  \binom n {d} I_{n,d}(2\beta+d)
\qquad\text{and}\qquad
\E f_{d-2}(P_{n,d}^{\beta})
=
d\binom n {d}  I_{n,d}(2\beta+d),
\end{align*}
where we used that $J_{m,m}(\alpha)=1$ and $J_{m,m-1}(\alpha) = 1/2$. The first formula recovers a result obtained in~\cite[Theorem~2.11, Remark 2.14]{beta_polytopes}, whereas the second one follows from the Dehn--Sommerville relation $2 f_{d-2}(P) = d f_{d-1}(P)$ valid for any $d$-dimensional simplicial polytope $P$.
\end{remark}

In the deterministic setting, it is easy to construct examples which show that adding one more point to the convex hull may increase or decrease the number of $k$-dimensional faces. However, in the setting of random polytopes, it is natural to conjecture that adding one more point should \textit{increase} the \textit{expected} number of $k$-faces, for every $k\in\{0,1,\ldots,d-1\}$. This conjecture is known to hold in several special cases. The work of Devillers, Glisse, Goaoc, Moroz and Reitzner~\cite{devillers} covers the case of the expected vertex number for convex hulls of uniformly distributed points in a planar convex body. For faces of maximal dimension it was established in the work of Beermann and Reitzner~\cite{beermann_diss,beermann_reitzner} for Gaussian polytopes  and in~\cite{bonnet_etal} by Bonnet, Grote, Temesvari, Th\"ale, Turchi and Wespi for beta and beta' polytopes. So far the only model where monotonicity of the expected number of $k$-faces is known for arbitrary $k\in\{0,1,\ldots,d-1\}$ are the Gaussian polytopes \cite{kabluchko_thaele}.
The explicit formula stated in Theorem~\ref{theo:f_vect} allows us to add another positive answer to the conjecture for the $k$-faces of beta polytopes, where $k\in\{0,1,\ldots,d-1\}$.

\begin{theorem}[Monotonicity of the expected $f$-vector]\label{theo:monoton}
For all $d\geq 2$,  $n\geq d+1$ and $k\in \{0,1,\ldots, d-1\}$ we have
$$
\E f_k(P_{n,d}^{\beta}) < \E f_k(P_{n+1,d}^{\beta}).
$$
\end{theorem}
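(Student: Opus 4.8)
The plan is to read off the dependence on $n$ directly from the exact formula \eqref{eq:f_k_P_main}. There, the factors $J_{d-2s,k+1}(\beta+s+\tfrac12)$ and the binomial coefficients $\binom{d-2s}{k+1}$ are non-negative and do not involve $n$, so Theorem~\ref{theo:monoton} reduces to the claim that for every $m\in\{1,\dots,d\}$ and $\alpha=2\beta+d\ge d-2\ge 0$ the sequence $n\mapsto\binom{n}{m}I_{n,m}(\alpha)$ is non-decreasing, and strictly increasing when $m\ge 2$. Indeed, the summand of \eqref{eq:f_k_P_main} with $s=0$ has $m=d\ge 2$ and the strictly positive coefficient $\binom{d}{k+1}J_{d,k+1}(\beta+\tfrac12)$ (an internal angle of a simplex is always positive), so this term alone forces strict monotonicity once the reduced claim holds. (The value $m=1$ occurs only for $k=0$ with $d$ odd, and there $\binom{n}{1}I_{n,1}(\alpha)=n\int_0^1 u^{n-1}\,\dd u=1$ is a harmless constant.) Note that by Remark~\ref{rem:SpecialCasesMainResultBeta} applied in dimension $m$, $2\binom{n}{m}I_{n,m}(\alpha)$ is the expected number of facets of an $m$-dimensional beta polytope on $n$ points with parameter $(\alpha-m)/2\ge-1$, so the reduced claim is exactly the monotonicity of the expected facet number of beta polytopes; this is known \cite{bonnet_etal}, but I would reprove it directly from \eqref{eq:I_definition}.

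Write $I_{n,m}(\alpha)=\int_{-1}^{1}h(t)\,F(t)^{\,n-m}\,\dd t$, where $h=f_{1,(\alpha m-1)/2}$, $\varphi=f_{1,(\alpha-1)/2}$ is the one-dimensional beta density, and $F(t)=\int_{-1}^{t}\varphi(s)\,\dd s$ is its distribution function. The substitution $u=F(t)$ turns this into $I_{n,m}(\alpha)=\int_0^1\psi(u)\,u^{\,n-m}\,\dd u$, where $\psi(u)=h(F^{-1}(u))/\varphi(F^{-1}(u))=\mathrm{const}\cdot(1-F^{-1}(u)^2)^{\alpha(m-1)/2}$ is again a probability density on $[0,1]$. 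Let $V$ have density $\psi$ and set $p=n-m\ge 1$. Dividing the inequality $\binom{n+1}{m}I_{n+1,m}(\alpha)>\binom{n}{m}I_{n,m}(\alpha)$ by $\binom{n}{m}$ rewrites it as
\[
(p+m+1)\,\E\!\left[V^{p+1}\right]>(p+1)\,\E\!\left[V^{p}\right],\qquad\text{i.e.}\qquad \int_0^1\psi(u)\,u^{p}\,q(u)\,\dd u>0,
\]
with $q(u)=(p+m+1)u-(p+1)$.

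Now $q$ changes sign from negative to positive at $u_\ast=\tfrac{p+1}{p+m+1}$, and a short moment computation shows that $\int_0^1\psi_W(u)\,u^{p}\,q(u)\,\dd u=0$ for the $\mathrm{Beta}(1,m)$ density $\psi_W(u)=m(1-u)^{m-1}$ (equivalently $\E[W^{p+1}]/\E[W^{p}]=u_\ast$). Hence, once one knows that the likelihood ratio $\psi(u)/\psi_W(u)\propto(1-F^{-1}(u)^2)^{\alpha(m-1)/2}(1-u)^{-(m-1)}$ is non-decreasing on $(0,1)$ and non-constant, the difference $\psi(u)-\rho\,\psi_W(u)$ with $\rho=\psi(u_\ast)/\psi_W(u_\ast)$ has the same sign as $q(u)$ for every $u$, so $\int_0^1(\psi-\rho\psi_W)\,u^{p}q\,\dd u>0$ and the claim follows because $\int_0^1\psi_W u^{p}q\,\dd u=0$. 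Monotonicity of the likelihood ratio is verified by a logarithmic derivative; after the substitution $t=F^{-1}(u)$ it reduces to the inequality
\[
(1-t^2)\,\varphi(t)\ \ge\ \alpha\,t\,(1-F(t)),\qquad t\in(-1,1),
\]
which is trivial for $t\le 0$ and, for $t\in(0,1)$, follows from the identity $(1-t^2)\varphi(t)=c_{1,(\alpha-1)/2}\int_t^1(\alpha+1)s\,(1-s^2)^{(\alpha-1)/2}\,\dd s$ (obtained by differentiating $-(1-s^2)^{(\alpha+1)/2}$), the analogous formula $1-F(t)=c_{1,(\alpha-1)/2}\int_t^1(1-s^2)^{(\alpha-1)/2}\,\dd s$, and the pointwise bound $(\alpha+1)s\ge\alpha t$ valid for $s\in[t,1]$ (since $\alpha\ge 0$ and $0\le t\le s$), which is strict on $(t,1)$. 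This gives strict monotonicity of the likelihood ratio for $m\ge 2$ (for $m=1$ the ratio is constant), completing the reduced claim and hence the proof.

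The crux is this last step: the one-dimensional inequality $(1-t^2)\varphi(t)\ge\alpha t(1-F(t))$ and its use in the sign-crossing comparison against the $\mathrm{Beta}(1,m)$ distribution. Everything else is bookkeeping with \eqref{eq:f_k_P_main}. What makes the crux manageable is precisely the antiderivative identity for the beta density displayed above, which trades the non-elementary tail $1-F(t)$ for an integral whose integrand can be compared, under a single integral sign, to that of $(1-t^2)\varphi(t)$.
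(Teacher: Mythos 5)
Your proposal is correct, and it follows the paper's skeleton up to the key analytic step but then diverges in how that step is carried out. Like the paper, you start from the exact formula of Theorem~\ref{theo:f_vect}, observe that the $J$-factors and binomial coefficients are nonnegative and $n$-free, and reduce everything to showing that $n\mapsto\binom{n}{m}I_{n,m}(\alpha)$ is non-decreasing for $\alpha\ge 0$, strictly for $m\ge 2$, with the $m=1$ term identically equal to $1$; this is exactly the paper's reduction \eqref{eq:MonoBetaToShow}. After your substitution $u=F(t)$ (the paper uses $s=\bar F(t)=1-u$), both arguments must prove positivity of the very same integral: your $\int_0^1\psi(u)\,u^p q(u)\,\dd u$ is, up to positive constants, the paper's $\int_0^1 L^{m-1}(s)g(s)h(s)\,\dd s$, since $L^{m-1}(s)\propto\psi(u)$, $g(s)\propto q(u)$ and $h(s)=u^p$. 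The paper establishes positivity via Lemma~\ref{lem:monotone_g}, i.e.\ strict concavity of $L(s)=f^\gamma(\bar F^{-1}(s))$ with $\gamma=\alpha/(\alpha-1)$ fed into the chord-comparison Lemma~\ref{lem:FromBGTTTW} from~\cite{bonnet_etal}; because this parametrization degenerates at $\alpha\in\{0,1\}$, those two cases are handled by separate explicit computations. You instead compare $\psi$ against the $\Beta(1,m)$ density $\psi_W$, whose moments make $\int_0^1\psi_W u^p q\,\dd u=0$, and conclude by a monotone-likelihood-ratio single-crossing argument; the ratio monotonicity reduces to the elementary tail inequality $(1-t^2)\varphi(t)\ge\alpha t\,(1-F(t))$, which your antiderivative identity proves (strictly for $t\in(0,1)$), uniformly in $\alpha\ge 0$, so no exceptional cases arise. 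The two hypotheses are in fact nested: concavity of the nonnegative function $L$ makes $L(s)/s$ non-increasing, which after $s=1-u$ and raising to the power $m-1$ is precisely your likelihood-ratio monotonicity; so you work with a weaker condition, verified by a different elementary computation. What each route buys: the paper's concavity check is a one-line derivative calculation at the cost of treating $\alpha=0$ and $\alpha=1$ separately, while your tail estimate is slightly longer but yields one uniform argument covering all $\alpha\ge 0$ (in particular the spherical case $\beta=-1$ in dimensions $2$ and $3$) without case distinctions.
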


The quantities $I_{n,k}(\alpha)$ and $J_{m,\ell}(\alpha)$ that appeared in \eqref{eq:I_definition} and \eqref{eq:J_definition}, respectively, will play a central role in the sequel.
The next theorem shows that the quantities $I_{n,k}(\alpha)$ can be interpreted as the \textit{expected external angles} of beta simplices (and, more generally, of beta polytopes).

\begin{theorem}[Expected external angles]\label{theo:external}
Fix some $k\in\{1,\ldots,d\}$ and consider the simplex $G := [X_1,\ldots,X_k]$. The expected external angle at $G$ is given by
$$
\E \gamma(G, P_{n,d}^\beta)
=
I_{n,k}(2\beta + d)
$$
with the convention that $\gamma(G, P_{n,d}^\beta)=0$ if $G$ is not a face of $P_{n,d}^\beta$.
Furthermore, the random variable $\gamma(G, P_{n,d}^\beta)$ is stochastically independent of the isometry type of the simplex  $G/\sqrt{1 - h^2}$, where $h:=d(0,\aff G)$ is the distance from the origin to the affine hull of $G$.
\end{theorem}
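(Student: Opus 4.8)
Proof proposal.

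The plan is to rewrite the expected external angle at $G$ as the probability that a uniformly random direction falls into the normal cone of $P_{n,d}^\beta$ at $G$, and then to evaluate that probability by peeling off the points $X_{k+1},\dots,X_n$ one at a time and recognising every one-dimensional marginal that arises as a one-dimensional beta density. Throughout I take $\beta>-1$; the boundary value $\beta=-1$ is recovered afterwards by a routine weak-convergence argument, both sides of the claimed identity being continuous in $\beta$.

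I would begin by setting $L:=\lin\{X_i-X_1:\,2\le i\le k\}$, the almost surely $(k-1)$-dimensional direction space of $\aff G$, so that $L^\perp$ has dimension $m:=d-k+1$. A unit vector $u$ belongs to the normal cone of $P_{n,d}^\beta$ at $G$ exactly when $\langle u,X_i\rangle=\max_{1\le j\le n}\langle u,X_j\rangle$ for every $i\le k$; the equalities for $i\le k$ force $u\in L^\perp$, so this cone equals $N:=\{u\in L^\perp:\langle u,X_j\rangle\le\langle u,X_1\rangle\text{ for }j=k+1,\dots,n\}$, and $\gamma(G,P_{n,d}^\beta)$ is, by definition (see Section~\ref{sec:facts}), the normalised spherical measure of the trace of $N$ on the unit sphere of $L^\perp$; when $G$ is not a face of $P_{n,d}^\beta$, the cone $N$ is not full-dimensional in $L^\perp$ and this measure vanishes, in agreement with the stated convention. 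Expressing the spherical measure as $\P(U\in N\mid X_1,\dots,X_n)$ for an auxiliary $U$ that, given $X_1,\dots,X_k$, is uniform on the unit sphere of $L^\perp$ (and hence independent of $X_{k+1},\dots,X_n$), and averaging, I obtain
$$
\E\gamma(G,P_{n,d}^\beta)=\P\big(\langle U,X_j\rangle\le\langle U,X_1\rangle\ \text{ for all }\ j=k+1,\dots,n\big).
$$

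Next I would condition on $(X_1,\dots,X_k,U)$. Then $U$ is a fixed unit vector, $X_{k+1},\dots,X_n$ remain i.i.d.\ with density $f_{d,\beta}$, and by the projection property of the beta distributions (a one-dimensional orthogonal projection of an $f_{d,\beta}$-vector has density $f_{1,\beta+(d-1)/2}$) the numbers $\langle U,X_j\rangle$, $j>k$, are i.i.d.\ with density $c_{1,(\alpha-1)/2}(1-s^2)^{(\alpha-1)/2}$ on $(-1,1)$, where $\alpha:=2\beta+d$. Hence the probability above equals $\E\big[F_\alpha(\langle U,X_1\rangle)^{\,n-k}\big]$, with $F_\alpha(t):=\int_{-1}^{t}c_{1,(\alpha-1)/2}(1-s^2)^{(\alpha-1)/2}\,\dd s$, and it only remains to identify the law of the threshold $\langle U,X_1\rangle$. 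Since $U\in L^\perp$ we have $\langle U,X_1\rangle=\langle U,z\rangle$, where $z$ is the orthogonal projection of $X_1$ onto $L^\perp$, that is, the foot of the perpendicular from the origin to $\aff G$, so that $\|z\|=h=d(0,\aff G)$. At this point I would invoke the affine Blaschke--Petkantschin formula for the joint density of $X_1,\dots,X_k$, decomposed along $\aff G$: writing $x_i=z+\sqrt{1-h^2}\,y_i$ with $y_i$ ranging over the unit ball of $\R^{k-1}$ and accounting for all the powers of $1-h^2$ produced by the Jacobians shows that $L$ is uniform on the Grassmannian, that conditionally on $L$ the point $z$ has density $f_{m,a}$ on the unit ball of $L^\perp$ with $a:=k\beta+\tfrac{(k-1)(d+1)}{2}$, and that $(L,z)$ is independent of $(y_1,\dots,y_k)$, whose density is proportional to $\prod_i(1-\|y_i\|^2)^\beta$ times the $(d-k+1)$-st power of the $(k-1)$-dimensional volume of $\conv(y_1,\dots,y_k)$. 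Given $L$, the pair $(z,U)$ then has product law $f_{m,a}\otimes\mathrm{Unif}$, so $\langle U,z\rangle$ is a one-dimensional orthogonal projection of an $f_{m,a}$-vector and therefore has density $f_{1,a+(m-1)/2}$; the identity $a+\tfrac{m-1}{2}=\tfrac{\alpha k-1}{2}$ shows that $\langle U,X_1\rangle$ has density $c_{1,(\alpha k-1)/2}(1-t^2)^{(\alpha k-1)/2}$. Substituting this into $\E\big[F_\alpha(\langle U,X_1\rangle)^{\,n-k}\big]$ reproduces exactly $I_{n,k}(2\beta+d)$.

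The independence assertion comes out of the same decomposition. On one hand, $\gamma(G,P_{n,d}^\beta)$ is the normalised spherical measure of $\{u\in L^\perp:\langle u,X_j\rangle\le\langle u,z\rangle,\ j=k+1,\dots,n\}$, hence a measurable function of $L^\perp$, $z$ and $X_{k+1},\dots,X_n$. On the other hand, $G/\sqrt{1-h^2}=z/\sqrt{1-h^2}+\conv(y_1,\dots,y_k)$ is a rigid motion of $\conv(y_1,\dots,y_k)$, so its isometry type is a function of $(y_1,\dots,y_k)$ alone. Since $(y_1,\dots,y_k)$ is, by the Blaschke--Petkantschin decomposition, independent of $(L,z)$, and is a function of $X_1,\dots,X_k$ and therefore independent of $X_{k+1},\dots,X_n$, it is independent of $\gamma(G,P_{n,d}^\beta)$, which is the claim. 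The routine ingredients here are the ``projection of a beta is a beta'' fact and the identity between a solid angle and the hitting probability of a uniform random direction; the real work is the Blaschke--Petkantschin bookkeeping --- pinning down the exponent $a=k\beta+\tfrac{(k-1)(d+1)}{2}$ and establishing the product structure of the decomposition (the independence of $(L,z)$ from the in-flat configuration) rigorously --- which I expect to be the main obstacle.
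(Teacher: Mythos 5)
Your proposal is correct and follows essentially the same route as the paper: you reduce $\E\gamma(G,P_{n,d}^\beta)$ to the probability that a random direction in $(\aff G)^\perp$ separates $X_1$ from $X_{k+1},\ldots,X_n$, identify the law of the threshold $\langle U,X_1\rangle=\langle U,p(\aff G)\rangle$ via the Blaschke--Petkantschin/Ruben--Miles canonical decomposition (your exponent $a=k\beta+\tfrac{(k-1)(d+1)}{2}$ and the check $a+\tfrac{m-1}{2}=\tfrac{\alpha k-1}{2}$ match the paper), and use the one-dimensional projection property of beta laws plus the product structure of the decomposition for the independence claim, exactly as in the paper's argument. The only differences are cosmetic (a uniform direction on the sphere of $L^\perp$ in place of the paper's Gaussian vector, and conditioning on $U$ rather than first projecting $X_{k+1},\ldots,X_n$ onto $A^\perp$), so there is nothing substantive to add.
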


\begin{remark}\label{rem:I_n_k_alternative}
Let us mention two alternative expressions for $I_{n,k}(\alpha)$:
\begin{align*}
I_{n,k}(\alpha)
&=\int_{-\pi/2}^{+\pi/2} c_{1,\frac{\alpha k - 1}{2}} (\cos \varphi)^{\alpha k} \left(\int_{-\pi/2}^\varphi c_{1,\frac{\alpha-1}{2}}(\cos \theta)^{\alpha} \,\dd \theta \right)^{n-k} \, \dd \varphi\\
&=\int_{-\infty}^{+\infty} c_{1,\frac{\alpha k - 1}{2}} (\cosh \varphi)^{-(\alpha k+1)} \left(\int_{-\infty}^\varphi c_{1,\frac{\alpha-1}{2}}(\cosh \theta)^{-(\alpha+1)} \,\dd \theta \right)^{n-k} \, \dd \varphi.
\end{align*}
The first formula can be obtained from~\eqref{eq:I_definition} by the change of variables $t=\sin\varphi$, $s= \sin \theta$ with $\varphi, \theta\in (-\frac \pi 2, +\frac \pi 2)$, whereas for the second we put $t= \tanh \varphi$, $s= \tanh \theta$ with $\varphi,\theta\in \R$.
\end{remark}

Finding an explicit formula for the \textit{expected internal angles} $J_{m,\ell}(\alpha)$ of beta simplices is a much more difficult question (except for the two trivial cases mentioned in Remark \ref{rem:SpecialCasesMainResultBeta} above and the identity $J_{3,1}(\alpha) = 1/6$ which is valid because the sum of the angles of a triangle is $\pi$). This is not surprising, since even in the limiting case, as $\alpha\to\infty$, where it is possible to show that $J_{m,\ell}(\alpha)$ tends to  the internal angle of an $(m-1)$-dimensional regular simplex at an $(\ell-1)$-dimensional face~\cite{kabluchko_zaporozhets_gauss_simplex,goetze_kabluchko_zaporozhets}, an explicit formula  is not well known to specialists. Explicit and asymptotic (as the dimension goes to $\infty$) formulae for the internal angles of regular simplices can be found in~\cite{boroczky_henk,kabluchko_zaporozhets_absorption,rogers_packing,rogers,vershik_sporyshev}.
The methods used in these papers do not seem to generalize to the finite $\alpha$ case. 
The problem of computing the quantities $J_{m,\ell}(\alpha)$ requires methods going beyond stochastic geometry and will be addressed in the subsequent publications~\cite{kabluchko_algorithm,kabluchko_angles,kabluchko_formula,kabluchko_poisson_zero}.
In particular, it will be shown in~\cite{kabluchko_formula} that
\begin{equation*}
J_{m,\ell}\left(\frac{\alpha - m + 1}{2}\right)
=
\int_{-\pi/2}^{+\pi/2} c_{1,\frac{\alpha m}2} (\cos \varphi)^{\alpha m + 1}
\left(\int_{-\infty}^{\ii\varphi}  c_{1,\frac{\alpha-1}{2}} (\cosh \theta)^{-(\alpha+1)}\dd \theta \right)^{m-\ell} \dd \varphi,
\end{equation*}
for all integer $m\geq 3$, $\ell\in \{1,\ldots,m\}$ and  $\alpha \geq m-3$. This will turn all formulae involving the quantities $J_{m,\ell}(\alpha)$ into fully explicit results.



In the next theorem we analyze the asymptotic behavior of the expected number of $k$-faces of $P_{n,d}^\beta$ when $n\to\infty$ and all other parameters stay fixed.

\begin{theorem}[Asymptotics of the $f$-vector]\label{theo:f_vector_asympt_beta}
For any fixed $d\in\N$ and $k\in \{0,1,\ldots, d-1\}$ we have
\begin{align*}
\lim_{n\to\infty} n^{-\frac{d-1}{2\beta + d + 1}}  \E f_k(P_{n,d}^\beta)
&=
\frac{2}{d!} \binom {d}{k+1}  J_{d,k+1}\left(\beta + \frac 12\right)
\frac{c_{1, \frac {(2\beta+d) d -1}{2}}}{2\beta+d+1}\\
&\qquad\qquad\qquad\times\left(\frac{2\beta+d+1}{c_{1,\frac{2\beta+d -1}{2}}}\right)^{\frac{(2\beta+d)  d + 1}{2\beta+d +1}}
\Gamma\left(\frac{(2\beta+d)  d+1}{2\beta+d+1}\right).
\end{align*}
\end{theorem}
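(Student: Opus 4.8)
The plan is to start from the exact formula in Theorem~\ref{theo:f_vect} and show that, as $n\to\infty$, only the $s=0$ term contributes to the leading order, then extract the asymptotics of that single term. For the $s=0$ term we need the asymptotics of $I_{n,d}(2\beta+d)$, so the first step is to analyze $I_{n,k}(\alpha)$ as $n\to\infty$ with $k$ and $\alpha$ fixed. Writing $\Phi(t) = \int_{-1}^t c_{1,(\alpha-1)/2}(1-s^2)^{(\alpha-1)/2}\,\dd s$ for the relevant beta cumulative distribution function, we have $I_{n,k}(\alpha)=\int_{-1}^{+1} c_{1,(\alpha k-1)/2}(1-t^2)^{(\alpha k-1)/2}\,\Phi(t)^{n-k}\,\dd t$. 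The factor $\Phi(t)^{n-k}$ concentrates near $t=1$, so this is a Laplace-type integral and the behavior is governed by how $1-\Phi(t)$ and $(1-t^2)^{(\alpha k-1)/2}$ behave as $t\uparrow 1$. Near $t=1$ one has $1-\Phi(t)\sim \text{const}\cdot(1-t)^{(\alpha+1)/2}$ and $(1-t^2)^{(\alpha k-1)/2}\sim \text{const}\cdot(1-t)^{(\alpha k-1)/2}$; substituting $1-t = u/n^{2/(\alpha+1)}$ (so that $\Phi(t)^{n-k}\to e^{-\text{const}\cdot u^{(\alpha+1)/2}}$) and a further substitution to turn the exponential into $e^{-v}$ produces a Gamma integral. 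This gives $I_{n,k}(\alpha)\sim C_{k,\alpha}\, n^{-(\alpha k+1)/(\alpha+1)}$ with an explicit constant $C_{k,\alpha}$ expressed via $\Gamma\big((\alpha k+1)/(\alpha+1)\big)$ and the normalizing constants $c_{1,\cdot}$.

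Next I would specialize to $k=d$, $\alpha = 2\beta+d$, which yields the exponent $-\frac{(2\beta+d)d+1}{2\beta+d+1}$ for $I_{n,d}(2\beta+d)$; then the $s=0$ term of~\eqref{eq:f_k_P_main}, namely $2\binom{n}{d}\binom{d}{k+1}I_{n,d}(2\beta+d)J_{d,k+1}(\beta+\tfrac12)$, behaves like $\frac{2}{d!}n^d\binom{d}{k+1}J_{d,k+1}(\beta+\tfrac12)\cdot C_{d,2\beta+d}\,n^{-\frac{(2\beta+d)d+1}{2\beta+d+1}}$. A short computation shows $d - \frac{(2\beta+d)d+1}{2\beta+d+1} = \frac{d-1}{2\beta+d+1}$, which matches the claimed power of $n$, and after unwinding $C_{d,2\beta+d}$ one recovers precisely the constant in the statement. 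To justify discarding the terms with $s\ge 1$, I would bound each such term: it is $2\binom{n}{d-2s}\binom{d-2s}{k+1}I_{n,d-2s}(2\beta+d)J_{d-2s,k+1}(\beta+s+\tfrac12)$, the binomial contributes $n^{d-2s}/(d-2s)!$, and by the same Laplace analysis $I_{n,d-2s}(2\beta+d)\asymp n^{-\frac{(2\beta+d)(d-2s)+1}{2\beta+d+1}}$, so the $s$-th term is of order $n^{(d-2s) - \frac{(2\beta+d)(d-2s)+1}{2\beta+d+1}} = n^{\frac{(d-2s)-1}{2\beta+d+1}}$, which is strictly smaller than $n^{(d-1)/(2\beta+d+1)}$ for every $s\ge1$ (since $2\beta+d+1>0$). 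Hence after multiplying by $n^{-(d-1)/(2\beta+d+1)}$ all those terms vanish in the limit, while the $s=0$ term converges to the stated value. (The $J$-factors are bounded by $1$ and cause no trouble.)

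The main obstacle is the Laplace-type asymptotic analysis of $I_{n,k}(\alpha)$: one must carefully identify the exact local expansions of $\Phi(t)$ and of the weight $(1-t^2)^{(\alpha k-1)/2}$ at $t=1$, keep track of all constants through two changes of variables, and invoke dominated convergence (or a quantitative tail estimate) to pass the limit under the integral sign uniformly. The change of variables is slightly delicate because the correct scaling exponent $2/(\alpha+1)$ depends on $\alpha$ and not on $k$, and because the constant from $1-\Phi(t)\sim \text{const}\cdot(1-t)^{(\alpha+1)/2}$ must be computed in terms of $c_{1,(\alpha-1)/2}$; this is the bookkeeping that turns the abstract $\Gamma$-integral into the precise formula in the statement. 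Everything else --- the binomial asymptotics, summing over $s$, comparing exponents --- is routine once this lemma is in place. Alternatively, one could phrase the $I_{n,k}$ asymptotics using the second (cosine) representation from the Remark and standard results on maxima of i.i.d.\ samples with densities vanishing like a power at the endpoint of their support, which makes the Gamma function appear naturally as the limit of a rescaled maximum; I would mention this as a sanity check but carry out the direct Laplace computation for self-containedness.
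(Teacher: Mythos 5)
Your proposal is correct and follows essentially the same route as the paper: the paper's Lemma~\ref{lem:asympt_beta} is exactly your Laplace-type analysis of $I_{n,k}(\alpha)$ via the substitution $1-t=un^{-2/(\alpha+1)}$ (with the dominated-convergence justification you flag as the main technical point), and the remainder of the paper's proof is your observation that the $s$-th term of~\eqref{eq:f_k_P_main} is of order $n^{(d-2s-1)/(2\beta+d+1)}$, so only $s=0$ survives after rescaling.
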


\begin{remark}
In the case $\beta=-1$, which corresponds to the uniform distribution on the sphere $\bS^{d-1}$, the above simplifies to
$$
\lim_{n\to\infty} \frac 1n  \E f_k(P_{n,d}^{-1})
={2^d\pi^{{d\over 2}-1}\over d(d-1)^2}{d\choose k+1}J_{d,k+1}\left(-{1\over 2}\right){\Gamma(1+{d(d-2)\over 2})\over\Gamma({(d-1)^2\over 2})}\left({\Gamma({d+1\over 2})\over\Gamma({d\over 2})}\right)^{d-1}.
$$
Except for the case $k=d-1$, where $J_{d,d}(-1/2)=1$ and which is mentioned in Buchta, M\"uller and Tichy \cite{buchta_mueller_tichy}, such an explicit result seems to be new, although the order of $\E f_k(P_{n,d}^{-1})$ in $n$ was determined in the thesis \cite{stemeseder_phd} using entirely different tools. Similarly, in the case $\beta=0$ corresponding to the uniform distribution on the ball $\BB^d$, we  obtain
\begin{equation}\label{eq:Limitf_kBall}
\begin{split}
\lim_{n\to\infty} n^{-\frac{d-1}{d+1}}  \E f_k(P_{n,d}^0) &= {2\pi^{d(d-1)\over 2(d+1)}\over (d+1)!}{d\choose k+1}J_{d,k+1}\left({1\over 2}\right)\\
&\qquad\qquad\times{\Gamma(1+{d^2\over 2})\Gamma({d^2+1\over d+1})\over\Gamma({d^2+1\over 2})}\left({(d+1)\Gamma({d+1\over 2})\over \Gamma(1+{d\over 2})}\right)^{d^2+1\over d+1}.
\end{split}
\end{equation}
Again, except for the case $k=d-1$, which is treated in \cite{affentranger}, such an explicit result seems new.
\end{remark}

Let us point out the following connection to a question of Reitzner. In \cite{ReitznerCombinatorialStructure} he has shown that if $K_n$ is the convex hull of $n\geq d+1$ uniformly distributed random points in a convex body $K\subseteq\R^d$ with twice differentiable boundary $\partial K$ and everywhere positive Gaussian curvature $\kappa(\,\cdot\,)$ then, for every $k\in\{0,1,\ldots,d-1\}$,
\begin{equation}\label{eq:ReitznerExpectation}
\lim_{n\to\infty}n^{-{d-1\over d+1}}\E f_k(K_n)
=
c_{d,k}\Omega(K)/\Vol_d(K)^{\frac{d-1}{d+1}}
\end{equation}
with
\begin{equation*}
\Omega(K):=\int_{\partial K}\kappa(x)^{1\over d+1}\,\cH^{d-1}(\dint x)
\end{equation*}
being the so-called \textit{affine surface area} of $K$ ($\cH^{d-1}$ is the $(d-1)$-dimensional Hausdorff measure) and where $c_{d,k}$ is a constant only depending on $d$ and on $k$. In~\cite{ReitznerCombinatorialStructure}, Equation~\eqref{eq:ReitznerExpectation} is stated under the assumption that $K$ has unit volume. The general case of~\eqref{eq:ReitznerExpectation} is a consequence of the scaling property of the affine surface area
$$
\Omega(r K) = r^{\frac{d(d-1)}{d+1}}\Omega (K), \qquad r>0,
$$
which can be found, e.g., in Theorem~3.6 of~\cite{hug_affine}.
Unfortunately and as pointed out in \cite[p.\ 181]{ReitznerCombinatorialStructure} it was not possible so far to determine the constant $c_{d,k}$ explicitly and in an accessible form. But since \eqref{eq:ReitznerExpectation} is true in particular for $K=\BB^d$ and since the affine surface area of $\BB^d$ is $\Omega(\BB^d)=2\pi^{d/2}/\Gamma({d\over 2})$, we can identify $c_{d,k}$ with  $\Vol_d(\BB^d)^{\frac{d-1}{d+1}}/\Omega(\BB^d)$ times the right hand side in \eqref{eq:Limitf_kBall}. We summarize these findings in the next proposition.

\begin{proposition}
For all $k\in \{0,1,\ldots, d-1\}$,  the constant $c_{d,k}$ in \eqref{eq:ReitznerExpectation} is given by
\begin{equation*}
c_{d,k}
=
\frac{\Gamma(1+\frac d2)^{\frac 2 {d+1}}} {d (d+1)!\pi^{\frac d {d+1}}}
{d\choose k+1}J_{d,k+1}\left({1\over 2}\right){\Gamma(1+{d^2\over 2})\Gamma({d^2+1\over d+1})\over\Gamma({d^2+1\over 2})}\left({(d+1)\Gamma({d+1\over 2})\over \Gamma(1+{d\over 2})}\right)^{d^2+1\over d+1}.
\end{equation*}
\end{proposition}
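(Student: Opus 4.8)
The plan is to specialise Reitzner's asymptotic identity \eqref{eq:ReitznerExpectation} to the case $K=\BB^d$, where both sides can be evaluated explicitly, and then to solve for the universal constant $c_{d,k}$.

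First I would observe that the beta density $f_{d,0}$ is precisely the uniform density on the unit ball $\BB^d$, so that the beta polytope $P_{n,d}^0$ has the same law as the convex hull $K_n$ of $n$ i.i.d.\ uniform points in $K=\BB^d$. The boundary $\partial\BB^d=\Sd$ is of class $C^\infty$ and has Gaussian curvature $\kappa\equiv 1$; hence $\BB^d$ meets the hypotheses of Reitzner's theorem, and its affine surface area is $\Omega(\BB^d)=\int_{\Sd}1^{1/(d+1)}\,\cH^{d-1}(\dint x)=\cH^{d-1}(\Sd)=2\pi^{d/2}/\Gamma(d/2)$.

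Next, since the constant $c_{d,k}$ appearing in \eqref{eq:ReitznerExpectation} depends only on $d$ and $k$ and not on the body $K$, choosing $K=\BB^d$ gives $c_{d,k}=\Omega(\BB^d)^{-1}\lim_{n\to\infty}n^{-(d-1)/(d+1)}\E f_k(P_{n,d}^0)$. The limit on the right-hand side is provided by Theorem~\ref{theo:f_vector_asympt_beta} in the explicit form \eqref{eq:Limitf_kBall} (the $\beta=0$ case). Substituting the two expressions and simplifying then yields the stated formula.

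The argument has essentially no difficulty of its own, as all the substance is contained in Theorem~\ref{theo:f_vector_asympt_beta} and in the cited theorem of Reitzner. The only thing that needs care is the algebraic bookkeeping of the constants — collecting the powers of $\pi$, tracking the exponent $(d^2+1)/(d+1)$, and combining the Gamma factors from $\Omega(\BB^d)^{-1}=\Gamma(d/2)/(2\pi^{d/2})$ with those in \eqref{eq:Limitf_kBall} — which is routine.
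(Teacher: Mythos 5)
Your proposal is correct and coincides with the paper's own argument: the paper also specialises Reitzner's identity to $K=\BB^d$, uses $\Omega(\BB^d)=2\pi^{d/2}/\Gamma(d/2)$, and identifies $c_{d,k}$ as $\Omega(\BB^d)^{-1}$ times the right-hand side of \eqref{eq:Limitf_kBall}. The bookkeeping of constants indeed works out, e.g.\ the power of $\pi$ is $\pi^{d(d-1)/(2(d+1))-d/2}=\pi^{-d/(d+1)}$ as stated.
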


\begin{remark}
We remark that for Gaussian polytopes, a representation of this type, involving the internal angle of a regular simplex, is well known from \cite[Equations (4.1) and (4.2)]{HMR04}.
\end{remark}

In the next theorem we evaluate the expected conic intrinsic volumes of the tangent cones at faces of the beta polytope. The definition of tangent cones and conic intrinsic volumes (which include internal and external solid angles as special cases), together with a list of their properties, will be given in Section~\ref{sec:facts}.
\begin{theorem}[Expected conic intrinsic volumes of tangent cones]\label{theo:expected_conic_tangent}
Fix some $k\in\{1,\ldots,d\}$ and consider the simplex $G := [X_1,\ldots,X_k]$. Then, for every $j\in\{k-1,\ldots,d-1\}$, the expected $j$-th conic intrinsic volume of the tangent cone $T(G, P_{n,d}^\beta)$ at $G$ is given by
\begin{align*}
\E \upsilon_{j} (T(G, P_{n,d}^{\beta}))
&=
{n-k\choose j-k+1}  I_{n,j+1}(2\beta+d) J_{j+1,k}\left(\beta + \frac{d - j}2\right),
\end{align*}
with the convention that $T(G, P_{n,d}^{\beta})=\R^d$ if $G$ is not a face of $P_{n,d}^{\beta}$.
\end{theorem}
Taking $j=k-1$  and observing that $\upsilon_{k-1} (T(G, P_{n,d}^{\beta})) = \gamma(T(G, P_{n,d}^{\beta}))$   provided $G$ is a face (this is because the tangent cone contains the $(k-1)$-dimensional affine  hull of $G$ shifted to $0$ as its lineality space), we recover the first claim of Theorem~\ref{theo:external} as a special case of Theorem~\ref{theo:expected_conic_tangent}.
Another interesting special case, $j=d$, is not included in Theorem~\ref{theo:expected_conic_tangent}. For $j=d$, we have the following result.

\begin{theorem}[Expected internal angles]\label{theo:internal}
Fix some $k\in\{1,\ldots,d\}$ and consider the simplex $G := [X_1,\ldots,X_k]$. The expected internal angle of $P_{n,d}^\beta$ at $G$ is given by
\begin{align*}
\E \left[\beta(G, P_{n,d}^\beta) \ind_{\{G\in\cF_{k-1}(P_{n,d}^\beta)\}} \right]
&=
\sum_{m=k}^d (-1)^{d-m} \binom {n-k}{m-k} I_{n,m}(2\beta+d) J_{m,k}\left(\beta +\frac{d-m+1}{2}\right),
\\
\E \beta(G, P_{n,d}^\beta)
&=
1-  \sum_{m=k}^d \binom {n-k}{m-k} I_{n,m}(2\beta+d) J_{m,k}\left(\beta +\frac{d-m+1}{2}\right)
,
\end{align*}
where in the second formula we use the convention that $\beta(G, P_{n,d}^\beta)=1$ if $G$ is not a face of $P_{n,d}^{\beta}$.
\end{theorem}


\subsection{Main results for beta' polytopes}\label{subsec:MainForBetaprime}
In this section we present our results for beta' polytopes. Let $X_1,\ldots,X_n$ be i.i.d.\ random points in $\R^d$ with density $\tilde f_{d,\beta}$.  Their convex hull will be denoted by
$$
\tilde P_{n,d}^\beta := [X_1,\ldots,X_n]
$$
and called the beta' polytope.  Unless otherwise stated, in all results on beta' polytopes we assume that the parameters $n\in\N$  and $d\in\N$ satisfy $n\geq d+1$, so that $\tilde P_{n,d}^\beta$ has full dimension $d$ a.s., while the parameter $\beta$ satisfies $\beta > d/2$. The following is the analogue of Theorem \ref{theo:f_vect}.
\begin{theorem}[Expected $f$-vector]\label{theo:f_vect_prime}
For every $k\in \{0,1,\ldots, d-1\}$, the expected number of $k$-dimensional faces of $\tilde P_{n,d}^\beta$ is given by
\begin{equation}
\E f_k(\tilde P_{n,d}^{\beta})
=
2 \sum_{s=0}^\infty \binom n {d-2s} \binom {d-2s}{k+1} \tilde I_{n,d-2s}(2\beta-d) \tilde J_{d-2s,k+1}\left(\beta - s - \frac 12\right).
\end{equation}
Here, the quantities $\tilde I_{n,k}(\alpha)$ are defined for $n\in\N$, $k\in\{1,\ldots,n\}$ and $\alpha>0$ by the formula
\begin{equation}\label{eq:I_definition_prime}
\tilde I_{n,k}(\alpha)
:=
\int_{-\infty}^{+\infty} \tilde c_{1, \frac {\alpha k + 1}{2}}
(1+t^2)^{-\frac {\alpha k + 1}{2}}
\left(\int_{-\infty}^t \tilde c_{1, \frac{\alpha + 1}{2}} (1+s^2)^{-\frac{\alpha + 1}{2}}\,\dd s\right)^{n-k} \dd t,
\end{equation}
while $\tilde J_{m,\ell}(\alpha)$ denotes the expected  internal angle at some $(\ell-1)$-dimensional face of the simplex $[Z_1,\ldots,Z_{m}] \subset \R^{m-1}$, where $Z_1,\ldots,Z_m$ are i.i.d.\ points in $\R^{m-1}$ with density $\tilde f_{m-1,\alpha}$. That is,
\begin{equation}\label{eq:J_definition_prime}
\tilde J_{m,\ell}(\alpha) :=  \E \beta([Z_1,\ldots,Z_{\ell}], [Z_1,\ldots,Z_m])
\end{equation}
for $m\in\N$, $\ell\in\{1,\ldots,m\}$ and  $\alpha>\frac{m-1}{2}$.
\end{theorem}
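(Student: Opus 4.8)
The plan is to reduce Theorem~\ref{theo:f_vect_prime} to the already-available beta case (Theorem~\ref{theo:f_vect}) by exploiting the standard correspondence between the beta' and beta families. Concretely, I would first recall the projective (or ``radial'') transformation under which a beta' random vector in $\R^d$ with parameter $\beta$ maps, after an appropriate stereographic-type map, to a beta-distributed vector in $\R^d$ (in one more dimension) with parameter $\beta - (d+1)/2$, or — more precisely in the form used in the companion paper \cite{beta_polytopes} — the fact that the distribution of a beta' point process in $\R^d$ is the image of a beta point process on a half-sphere $\SS^d_+$ under the gnomonic projection. Since the gnomonic (central) projection is a projective map, it sends the convex hull of points to (the corresponding portion of) the convex hull of their images, hence preserves the whole face lattice: $f_k(\tilde P_{n,d}^\beta)$ equals $f_k$ of the corresponding spherically convex beta polytope. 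Taking expectations, the combinatorial quantity on the left is literally the same object studied in the beta-polytope computation, just with a shifted parameter and on the sphere.

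Next I would carry out the Ruben--Miles / Blaschke--Petkantschin machinery exactly as in the proof of Theorem~\ref{theo:f_vect}, but now with the beta' density in place of the beta density. The point is that the whole derivation of \eqref{eq:f_k_P_main} factorizes into (i) an ``affine subspace'' integral producing the external-angle factor $I_{n,k}$, and (ii) a ``within the subspace'' integral producing the internal-angle factor $J_{d-2s,k+1}$, with the binomial coefficients and the sum over $s$ coming from the iterated Blaschke--Petkantschin formula and the parity of the dimension count. For the beta' family the key input is the analogous Ruben--Miles property: if one conditions on the affine hull of a sub-simplex spanned by some of the $X_i$'s, the conditional law of the projections onto that affine hull is again a (lower-dimensional) beta' law, with parameter shifted in the predictable way — this is precisely what turns the $\alpha = 2\beta + d$ of the beta case into $\alpha = 2\beta - d$ here, and the argument of $\tilde J$ into $\beta - s - \tfrac12$. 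Carrying the substitutions through the change-of-variables computation yields \eqref{eq:I_definition_prime} in place of \eqref{eq:I_definition} and the stated formula.

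In more detail, the skeleton I would follow is: express $\E f_k(\tilde P_{n,d}^\beta) = \sum_{1\le i_1<\dots<i_{k+1}\le n} \P[ [X_{i_1},\dots,X_{i_{k+1}}] \in \cF_k(\tilde P_{n,d}^\beta)]$; use the criterion that a simplex is a face iff all other points lie on one side of some hyperplane through its affine hull, and integrate out using the affine Blaschke--Petkantschin formula to separate the ``position of the affine hull $E$'' from the ``shape of the simplex inside $E$''. On the affine hull the remaining $n-k-1$ points, restricted to the relevant half-space, contribute the $(n-k)$-th-power integrand; the Gegenbauer/trigonometric structure of the beta' radial part is what makes the $t$-integral collapse to \eqref{eq:I_definition_prime}. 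The internal-angle factor appears because once $E$ is fixed, computing the probability that the simplex is a face of the full-dimensional hull, summed appropriately over which remaining points are ``active'', is exactly an expected internal angle of a lower-dimensional beta' simplex — giving $\tilde J_{d-2s,k+1}(\beta - s - \tfrac12)$. The sum over $s$ and the double binomial coefficient are inherited verbatim from the beta-case bookkeeping, since that part is purely dimensional.

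The main obstacle I anticipate is bookkeeping the parameter shifts correctly and verifying the Ruben--Miles-type disintegration property for the beta' family in the exact normalization needed — in particular, tracking how the Jacobian of the Blaschke--Petkantschin factorization combines with the $(1+\|x\|^2)^{-\beta}$ weight to reproduce a genuine beta' density in the lower-dimensional slice, as opposed to merely a density proportional to a power of $(1+\|\cdot\|^2)$ with the wrong constant. A clean way around this is to not redo the computation from scratch but instead invoke the half-sphere/gnomonic-projection dictionary: transfer the already-proved beta identity on $\SS^d_+$ to $\R^d$, noting that central projection preserves faces and that the pushforward of the uniform-type beta measure on the half-sphere under gnomonic projection is exactly $\tilde f_{d,\beta'}$ for the matching $\beta'$. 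Either route works; I would present the direct Blaschke--Petkantschin computation for self-containedness and relegate the constant-chasing to a short lemma, flagging that it is the mirror image of the beta computation with $\cos$ replaced by $\cosh$ (equivalently $1-t^2$ by $1+t^2$).
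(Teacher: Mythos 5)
There is a genuine gap: neither of your two routes, as described, actually produces the stated formula. The gnomonic/half-sphere dictionary does not reduce the beta' case to Theorem~\ref{theo:f_vect}, because that theorem concerns Euclidean beta polytopes in $\R^d$, not spherical convex hulls; for general $\beta$ the pre-image of $\tilde f_{d,\beta}$ under the gnomonic map is a non-uniform density on $\bS^d_+$, and there is no ``already-proved beta identity on $\SS^d_+$'' to transfer. Your direct route is closer in spirit, but it misattributes the two structural features of the formula. First, the parity-restricted sum over $s$ with the double binomial coefficient is \emph{not} ``purely dimensional bookkeeping'' from an iterated Blaschke--Petkantschin computation of face probabilities: if you expand $\E f_k$ as $\binom{n}{k+1}$ times the probability that a fixed $(k+1)$-tuple is a face, that probability does not split into an external-angle integral times an internal-angle integral, and no sum over $s$ appears. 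In the paper this sum comes from representing $\tilde P_{n,d}^\beta$ as an orthogonal projection of the higher-dimensional beta' polytope $\tilde P_{n,d+\ell}^{\beta+\ell/2}$ (Lemma~\ref{lem:projection}~(b)) and applying the Affentranger--Schneider formula (Proposition~\ref{theo:affentranger_schneider}), whose conic-Crofton origin is what generates the terms $G\in\cF_{d-1-2s}$.

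Second, even granting that decomposition, the factorization $\E[\gamma(G,\cdot)\sum_F\beta(F,G)]=\E[\gamma(G,\cdot)]\,\E[\sum_F\beta(F,G)]$ is not automatic: it requires the independence statement of Theorem~\ref{theo:external_prime}, i.e.\ the Ruben--Miles canonical decomposition showing that the external angle is stochastically independent of the rescaled shape of the simplex $G$. Your proposal mentions the Ruben--Miles disintegration but never uses it for this purpose, and it also omits the step identifying the internal-angle factor: the faces $G$ live in $\R^{d+\ell}$ with density $\tilde f_{d+\ell,\beta+\ell/2}$, and one needs Theorem~\ref{theo:internal_angle_projection} (beta' case) to see that the expected internal angle equals $\tilde J_{d-2s,k+1}\bigl(\beta-s-\tfrac12\bigr)$, computed for a simplex in $\R^{d-2s-1}$, independently of $\ell$. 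Without the projection identity, the independence, and this dimension-invariance lemma, the ``constant-chasing lemma'' you defer cannot be carried out, so the argument as proposed does not close. (On the positive side, you are right that no analytic continuation is needed here, since raising the dimension \emph{increases} the beta' parameter.)
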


The next theorem is the analogue of Theorem \ref{theo:monoton} and shows that the expected $f$-vector is strictly monotonically increasing as a function of the number $n$ of points.

\begin{theorem}[Monotonicity of the expected $f$-vector]\label{theo:monoton_prime}
For all $d\geq 2$,  $n\geq d+1$ and $k\in\{0,1,\ldots, d-1\}$ we have
$$
\E f_k(\tilde P_{n,d}^{\beta}) < \E f_k(\tilde P_{n+1,d}^{\beta}).
$$
\end{theorem}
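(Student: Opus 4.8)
The plan is to mirror the proof of Theorem~\ref{theo:monoton} for ordinary beta polytopes, exploiting the explicit formula from Theorem~\ref{theo:f_vect_prime}. The key observation is that the factors $\tilde J_{d-2s,k+1}(\beta-s-\tfrac12)$ and the binomial coefficients do not depend on $n$, so the entire $n$-dependence of $\E f_k(\tilde P_{n,d}^\beta)$ is carried by the integrals $\tilde I_{n,d-2s}(2\beta-d)$, each multiplied by the $n$-dependent binomial $\binom{n}{d-2s}$. Thus it suffices to prove the following term-by-term monotonicity: for every $k'\in\{1,\ldots,d\}$ and every admissible $\alpha$,
\begin{equation*}
\binom{n}{k'}\tilde I_{n,k'}(\alpha) < \binom{n+1}{k'}\tilde I_{n+1,k'}(\alpha),
\end{equation*}
since all coefficients $\binom{d-2s}{k+1}\tilde J_{d-2s,k+1}(\cdot)$ are nonnegative and at least one (namely $s=0$) is strictly positive whenever $d-2s>k$. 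First I would reduce to this inequality, then establish it.

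To prove the reduced inequality I would use the probabilistic interpretation supplied by Theorem~\ref{theo:external} (in its beta' analogue): $\tilde I_{n,k'}(\alpha)$ equals the expected external angle $\E\gamma(G,\tilde P_{n,d}^\beta)$ at a fixed simplex $G=[X_1,\ldots,X_{k'}]$, with the convention that the angle is zero when $G$ is not a face. Multiplying by $\binom nk'$ and summing over all $\binom nk'$ choices of $k'$-subsets, one sees that $\binom{n}{k'}\tilde I_{n,k'}(\alpha)$ is, up to a fixed constant, the expected sum of external angles over all potential $(k'-1)$-faces; equivalently, by the formula for $\E f_{k'-1}$ read off Theorem~\ref{theo:f_vect_prime} restricted to a single binomial term, it is a weighted count of $(k'-1)$-faces "with external-angle weight". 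The cleanest route, though, is purely analytic: from the integral representation~\eqref{eq:I_definition_prime}, write $\Phi(t)=\int_{-\infty}^t \tilde c_{1,(\alpha+1)/2}(1+s^2)^{-(\alpha+1)/2}\,\dd s$, a cumulative distribution function, and $\psi(t)=\tilde c_{1,(\alpha k'+1)/2}(1+t^2)^{-(\alpha k'+1)/2}$, so that
\begin{equation*}
\binom n{k'}\tilde I_{n,k'}(\alpha) = \binom n{k'}\int_{-\infty}^{+\infty}\psi(t)\,\Phi(t)^{n-k'}\,\dd t.
\end{equation*}
Then the difference $\binom{n+1}{k'}\tilde I_{n+1,k'}(\alpha)-\binom{n}{k'}\tilde I_{n,k'}(\alpha)$ becomes $\binom n{k'}\int \psi(t)\Phi(t)^{n-k'}\big(\tfrac{n+1}{n+1-k'}\Phi(t)-1\big)\dd t$, and one shows this is strictly positive by an integration-by-parts / symmetrization argument, or by recognizing the integral as the expectation of a monotone function against the order statistics of $n$ versus $n+1$ i.i.d.\ samples with CDF $\Phi$.

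The main obstacle I anticipate is the last step: the bracketed factor $\tfrac{n+1}{n+1-k'}\Phi(t)-1$ changes sign (it is negative for small $t$ and positive for large $t$), so the positivity of the integral is not pointwise and requires a genuine argument. I expect the paper handles this exactly as in the beta case — likely via an integration by parts that turns $\int\psi\,\Phi^{n-k'}$ into an expression involving $\int (\text{something positive})\,\Phi^{n-k'+1}$ or $\Phi^{n-k'-1}$, combined with a convexity/monotonicity property of the ratio $\psi/\phi$ where $\phi=\Phi'$ (here $\phi(t)=\tilde c_{1,(\alpha+1)/2}(1+t^2)^{-(\alpha+1)/2}$ and $\psi/\phi = (\tilde c_{1,(\alpha k'+1)/2}/\tilde c_{1,(\alpha+1)/2})(1+t^2)^{-(\alpha k'-\alpha)/2}$, which is even and unimodal with a maximum at $t=0$). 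Once that structural inequality is in place, the strict inequality follows because $\psi>0$ everywhere, and then summing the strictly positive $s=0$ contribution with the nonnegative $s\ge1$ contributions yields $\E f_k(\tilde P_{n,d}^\beta)<\E f_k(\tilde P_{n+1,d}^\beta)$. The only beta'-specific care needed is checking the integrability and the validity of the external-angle identity for the relevant range of $\alpha=2\beta-d$ and the shifted parameters $\beta-s-\tfrac12$, but these are inherited from the standing assumption $\beta>d/2$ together with the remark that only finitely many terms ($d-2s>k$) contribute.
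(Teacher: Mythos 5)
Your reduction is the same as the paper's: by Theorem~\ref{theo:f_vect_prime} it suffices to show that $\binom{n}{m}\tilde I_{n,m}(\alpha)$ is non-decreasing in $n$ (with $\alpha=2\beta-d>0$), strictly increasing for $m\geq 2$, and then the strictly increasing $s=0$ term (which has $m=d\geq 2$) carries the strict inequality for the sum. One small inaccuracy: your reduced inequality cannot be strict for $m=1$, since $\binom{n}{1}\tilde I_{n,1}(\alpha)=n\int f F^{n-1}=1$ for every $n$; this does not harm the theorem, but the statement ``for every $k'\in\{1,\ldots,d\}$'' as you wrote it is false at $k'=1$.

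The genuine gap is the core positivity step, which you explicitly leave open. You correctly identify that after writing the difference as $\binom{n}{m}\int \psi(t)\,\Phi(t)^{n-m}\bigl(\tfrac{n+1}{n+1-m}\Phi(t)-1\bigr)\dd t$ the bracket changes sign, so no pointwise argument works; but you then only conjecture that ``an integration by parts / symmetrization'' or an order-statistics interpretation will finish it, deferring to how ``the paper handles the beta case''. The actual argument (Lemma~\ref{lem:monotone_g}, resting on Lemma~\ref{lem:FromBGTTTW} from~\cite{bonnet_etal}) is: substitute $s=\bar F(t)$, so the difference becomes $\int_0^1 h(s)g(s)L^{m-1}(s)\dd s$ with $h(s)=(1-s)^{n-m}$, $g$ linear with negative slope and root $s^*=m/(n+1)$, and $L(s)=f^{\gamma}(\bar F^{-1}(s))$ where $\gamma=\alpha/(\alpha+1)$; one verifies that $L$ is \emph{strictly concave} because $L'(s)=-\gamma f^{\gamma-2}(\bar F^{-1}(s))f'(\bar F^{-1}(s))$ and $\gamma f^{\gamma-2}(t)f'(t)=-\alpha\,\tilde c_{1,\frac{\alpha+1}{2}}^{-1/(\alpha+1)}\,t/\sqrt{1+t^2}$ is strictly decreasing; the concavity lemma then bounds the integral strictly from below by the same integral with $L^{m-1}$ replaced by $\bigl(\tfrac{L(s^*)}{s^*}s\bigr)^{m-1}$, and this comparison integral is computed to be exactly zero via the beta-function identity $B(x,y+1)=\tfrac{y}{x+y}B(x,y)$. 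None of this — the choice of the exponent $\gamma$, the concavity verification (your remark that $\psi/\phi$ is ``even and unimodal'' is not the property used and is not by itself sufficient), and the exact vanishing of the linearized comparison — appears in your proposal, so as written it is a plan rather than a proof of the decisive inequality.
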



Our next result on beta' polytopes is the following analogue of Theorem~\ref{theo:external}.

\begin{theorem}[Expected external angles]\label{theo:external_prime}
Fix some $k\in\{1,\ldots,d\}$ and consider the simplex $G := [X_1,\ldots,X_k]$. The expected external angle at $G$ is given by
$$
\E \gamma(G, \tilde P_{n,d}^\beta)
=
\tilde I_{n,k}(2\beta - d)
$$
with the convention that $\gamma(G, \tilde P_{n,d}^\beta)=0$ if $G$ is not a face of $\tilde P_{n,d}^\beta$. Furthermore, the random variable $\gamma(G, \tilde P_{n,d}^\beta)$ is stochastically independent of the isometry type of the simplex  $G/\sqrt{1 + h^2}$, where $h:=d(0,\aff G)$ is the distance from the origin to the affine hull of $G$.
\end{theorem}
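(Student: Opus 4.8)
The plan is to follow the strategy used for the beta case (Theorem~\ref{theo:external}), with the ball replaced by $\R^d$, beta densities by beta' densities, and the factors $1-h^2$ by $1+h^2$. Write $G=[X_1,\dots,X_k]$, put $E:=\aff G$, and let $L$ be the $(d-k+1)$-dimensional linear subspace orthogonal to the direction space of $E$; then $E\cap L=\{p\}$, where $p$ is the foot of the perpendicular from $0$ to $E$ and $|p|=h$. Since a supporting hyperplane at a face $G$ must contain $\aff G$, the external solid angle $\gamma(G,\tilde P_{n,d}^\beta)$ equals the normalised solid angle, inside $L$, of the cone $\{v\in L:\langle v,X_i-X_1\rangle<0,\ i>k\}$; this cone is empty (hence of angle $0$, matching the convention) precisely when $G$ is not a face. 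Consequently, conditionally on the sample,
\[
\gamma(G,\tilde P_{n,d}^\beta)=\P\big[\langle u,X_i-X_1\rangle<0\ \ \forall i\in\{k+1,\dots,n\}\ \big|\ X_1,\dots,X_n\big],
\]
where $u$ is uniform on $\bS^{d-1}\cap L$ and independent of the sample. Writing $p_i:=\Pi_L X_i$ (orthogonal projection onto $L$) we have $\langle u,X_i-X_1\rangle=\langle u,p_i-p\rangle$, so the right-hand side is a function of the direction space of $E$, of $p$, and of $X_{k+1},\dots,X_n$ only — not of the relative positions of $X_1,\dots,X_k$ in $E$. Taking expectations, $\E\gamma(G,\tilde P_{n,d}^\beta)=\P[\langle u,p_i-p\rangle<0\ \forall i>k]$.

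I would then pin down the joint law of the ingredients. The beta' family is rotation invariant and its orthogonal projection onto an $m$-dimensional subspace has density $\tilde f_{m,\beta-(d-m)/2}$; hence, conditionally on $L$, the vectors $p_{k+1},\dots,p_n$ are i.i.d.\ with density $\tilde f_{d-k+1,\beta-(k-1)/2}$ and independent of $(p,u)$. For the law of $E$ I would apply the affine Blaschke--Petkantschin formula to $\prod_{i=1}^{k}\tilde f_{d,\beta}(x_i)\,\dd x_i$, substitute $y_i=p+\sqrt{1+h^2}\,z_i$ with $z_i$ in the direction space of $E$, and use $1+|y_i|^2=(1+h^2)(1+|z_i|^2)$; collecting the powers of $1+h^2$ from the $k$ densities, the $k$ elements $\dd y_i$, and the Jacobian $\Delta_{k-1}(\,\cdot\,)^{d-k+1}$ gives the total exponent $-k\beta+\tfrac{(k-1)(d+1)}{2}$. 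Thus the direction space of $E$ is Haar distributed, $p$ has the rotationally symmetric density $\propto(1+|p|^2)^{-k\beta+(k-1)(d+1)/2}$ on $L$ — i.e.\ $p\sim\tilde f_{d-k+1,\,k\beta-(k-1)(d+1)/2}$ — and, conditionally on $E$, the rescaled simplex $(G-p)/\sqrt{1+h^2}$ is an i.i.d.\ $\tilde f_{k-1,\beta}$ simplex, independent of the direction space of $E$, of $p$, of $u$, and of $X_{k+1},\dots,X_n$. Combined with the measurability remark above, this yields the claimed independence of $\gamma(G,\tilde P_{n,d}^\beta)$ from the isometry type of $G/\sqrt{1+h^2}$.

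It remains to collapse the $(d-k+1)$-dimensional problem to one dimension. Conditionally on $L$ and $u$, the real variables $T:=\langle u,p\rangle$ and $S_i:=\langle u,p_i\rangle$ ($i>k$) are independent, with conditional laws not depending on $(L,u)$: $T$ is the one-dimensional marginal of the $\tilde f_{d-k+1,\,k\beta-(k-1)(d+1)/2}$-vector $p$, so $T\sim\tilde f_{1,\,\frac{(2\beta-d)k+1}{2}}$, while each $S_i$ is the one-dimensional marginal of a $\tilde f_{d-k+1,\,\beta-(k-1)/2}$-vector, so $S_i\sim\tilde f_{1,\,\frac{2\beta-d+1}{2}}$. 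Hence
\[
\E\gamma(G,\tilde P_{n,d}^\beta)=\P\big[S_i<T\ \forall i>k\big]
=\int_{-\infty}^{+\infty}\tilde f_{1,\frac{(2\beta-d)k+1}{2}}(t)\Big(\int_{-\infty}^{t}\tilde f_{1,\frac{2\beta-d+1}{2}}(s)\,\dd s\Big)^{n-k}\dd t
=\tilde I_{n,k}(2\beta-d).
\]
The main obstacle is the bookkeeping in the second step: one must track every power of $1+h^2$ produced by the Blaschke--Petkantschin Jacobian together with the rescaling so that the foot-of-perpendicular density comes out to be \emph{exactly} a beta' density whose one-dimensional marginal carries \emph{exactly} the parameter $\tfrac{(2\beta-d)k+1}{2}$ featuring in $\tilde I_{n,k}$. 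A secondary point is to check that the $\sigma$-algebras decouple as asserted — in particular that $u$ may be treated as fresh randomness, independent of $p$ and of the $p_i$ given the direction space of $E$ — which is where rotation invariance of $\tilde f_{d,\beta}$ is used repeatedly.
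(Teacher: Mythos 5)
Your proof is correct and takes essentially the same route as the paper: express the external angle as the probability that a random direction in the orthogonal complement of $\aff G$ lies in the normal cone, obtain the joint law of the foot of the perpendicular and of the projected points $X_{k+1},\ldots,X_n$ via the Blaschke--Petkantschin computation underlying the Ruben--Miles decomposition (Theorem~\ref{theo:ruben_miles_prime}) together with Lemma~\ref{lem:projection}\,(b), and then reduce to one dimension, which yields exactly $\tilde I_{n,k}(2\beta-d)$. One incidental slip that does not affect either claim: the rescaled vertices $(X_i-p)/\sqrt{1+h^2}$, $i\le k$, are not i.i.d.\ with density $\tilde f_{k-1,\beta}$ --- their joint density carries the additional factor $\Delta^{d-k+1}(z_1,\ldots,z_k)$ --- but your argument only uses their independence from $\aff G$ and from $X_{k+1},\ldots,X_n$, which the product structure of the decomposition does provide.
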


\begin{remark}
As in the beta case, we have two alternative expressions for $\tilde I_{n,k}(\alpha)$:
\begin{align*}
\tilde I_{n,k}(\alpha)
&=\int_{-\pi/2}^{+\pi/2} \tilde c_{1,\frac{\alpha k + 1}{2}} (\cos \varphi)^{\alpha k-1} \left(\int_{-\pi/2}^\varphi \tilde c_{1,\frac{\alpha+1}{2}}(\cos \theta)^{\alpha-1} \,\dd \theta \right)^{n-k} \, \dd \varphi\\
&=\int_{-\infty}^{+\infty} \tilde c_{1,\frac{\alpha k + 1}{2}} (\cosh \varphi)^{-\alpha k} \left(\int_{-\infty}^\varphi \tilde c_{1,\frac{\alpha+1}{2}}(\cosh \theta)^{-\alpha} \,\dd \theta \right)^{n-k} \, \dd \varphi.
\end{align*}
These formulae can be obtained from~\eqref{eq:I_definition_prime} by the changes of variables $t=\tan \varphi$, $s= \tan \theta$,  and $t= \sinh \varphi$, $s= \sinh \theta$, respectively.
The problem of computing $\tilde J_{m,\ell}(\alpha)$ requires methods going beyond the scope of the present paper and will be addressed in~\cite{kabluchko_formula}, where it will be shown that
\begin{equation*}
\tilde J_{m,\ell}\left(\frac{\alpha + m - 1}{2}\right)
=
\int_{-\pi/2}^{+\pi/2} \tilde c_{1,\frac{\alpha m}2} (\cos \varphi)^{\alpha m - 2} \left(\int_{-\infty}^{\ii \varphi} \tilde c_{1,\frac{\alpha+1}{2}}(\cosh \theta)^{-\alpha}\dd \theta \right)^{m-\ell} \dd \varphi,
\end{equation*}
for all $\alpha>0$, $m\in\N$ and $\ell\in \{1,\ldots,m\}$ such that $\alpha m>1$.
\end{remark}

Finally, we present a formula for the expected conic intrinsic volumes of the tangent cones at the faces of a beta' polytope.
\begin{theorem}[Expected conic intrinsic volumes of tangent cones]\label{theo:expected_conic_tangent_prime}
Fix some $k\in\{1,\ldots,d\}$ and consider the simplex $G := [X_1,\ldots,X_k]$. Then, for every $j\in\{k-1,\ldots,d-1\}$, the expected $j$-th conic intrinsic volume of the tangent cone $T(G, \tilde P_{n,d}^\beta)$ at $G$ is given by
\begin{align*}
\E \upsilon_{j} (T(G, \tilde P_{n,d}^{\beta}))
=
{n-k\choose j-k+1} \tilde I_{n,j+1}(2\beta-d) \tilde J_{j+1,k}\left(\beta - \frac{d - j}2\right),
\end{align*}
with the convention that $T(G, \tilde P_{n,d}^{\beta})=\R^d$ if $G$ is not a face of $\tilde P_{n,d}^{\beta}$.
\end{theorem}

With the choice $j=k-1$ we recover the first claim of Theorem~\ref{theo:external_prime}.
In the case $j=d$, which is not covered by Theorem~\ref{theo:expected_conic_tangent_prime}, we have the following analogue of Theorem~\ref{theo:internal}.

\begin{theorem}[Expected internal angles]\label{theo:internal_prime}
Fix some $k\in\{1,\ldots,d\}$ and consider the simplex $G := [X_1,\ldots,X_k]$. The expected internal angle of $\tilde P_{n,d}^\beta$ at $G$ is given by
\begin{align*}
\E \left[\beta(G, \tilde P_{n,d}^\beta) \ind_{\{G\in\cF_{k-1}(\tilde P_{n,d}^\beta)\}} \right]
&=
\sum_{m=k}^d (-1)^{d-m} \binom {n-k}{m-k} \tilde I_{n,m}(2\beta-d) \tilde J_{m,k}\left(\beta - \frac{d-m+1}{2}\right),
\\
\E \beta(G, \tilde P_{n,d}^\beta)
&=
1-  \sum_{m=k}^d \binom {n-k}{m-k} \tilde I_{n,m}(2\beta - d) \tilde J_{m,k}\left(\beta - \frac{d-m+1}{2}\right)
,
\end{align*}
with the convention that $\beta(G, \tilde P_{n,d}^\beta)=1$ if $G$ is not a face of $\tilde P_{n,d}^{\beta}$.
\end{theorem}

\begin{remark}
The methods of the present paper can be adapted to treat the \textit{symmetric} beta and beta' polytopes which are defined as the convex hulls of $\pm X_1,\ldots,\pm X_n$, where $X_1,\ldots,X_n$ are i.i.d.\ with beta  or beta' distribution. However, we refrain from considering symmetric polytopes here.
\end{remark}

%

\subsection{Poisson point processes with power-law intensity}\label{subsec:PPP}
In the large $n$ limit, rescaled samples from the beta' distribution converge to the Poisson point process with a power-law intensity function. This can be used to obtain results on the convex hull of this class of Poisson point process. For $\alpha>0$ let $\Pi_{d,\alpha}$ be a Poisson point process on $\R^d\backslash\{0\}$ with power-law intensity function
$$
x\mapsto \|x\|^{-d-\alpha},\qquad x\in \R^d \bsl \{0\}.
$$
The number of points of $\Pi_{d,\alpha}$ outside any ball centered at the origin is finite, but the total number of points is infinite, and, in fact, the origin is an accumulation point for the atoms of $\Pi_{d,\alpha}$, with probability $1$; see the left panel of Figure~\ref{fig:poisson}.  The convex hull of the atoms of $\Pi_{d,\alpha}$ will be denoted by $\conv \Pi_{d,\alpha}$. In~\cite{convex_hull_sphere} it was shown that $\conv \Pi_{d,\alpha}$ is almost surely a polytope, and explicit formulae for its expected intrinsic volumes  and expected number of $(d-1)$-dimensional faces were given. Using the results obtained in Section \ref{subsec:MainForBetaprime} we can now provide an explicit formula for the expected number of $k$-dimensional faces of $\conv \Pi_{d,\alpha}$ for any $k\in\{0,1,\ldots,d-1\}$.

\begin{theorem}\label{theo:f_vect_poisson}
For every $d\in\N$ and $k\in \{0,1,\ldots,d-1\}$, the expected number of $k$-faces of $\conv \Pi_{d,\alpha}$ is given by
\begin{equation} \label{eq:E_f_k_beta_prime_to_poisson}
\begin{split}
&\E f_k(\conv \Pi_{d,\alpha})
=
\lim_{n\to\infty}\E f_k\left(\tilde P_{n,d}^{\frac{d+\alpha}{2}}\right)
\\
&\quad=2 \sum_{\substack{m\in \{k+1,\ldots,d\}\\ m\equiv d \Mod{2}}} \frac{\Gamma\left(\frac{m\alpha + 1}{2}\right) \Gamma\left(\frac \alpha 2\right)^{m}}{\Gamma\left(\frac{m\alpha}{2}\right)\Gamma\left(\frac{\alpha+1}{2}\right)^{m}}\frac{(\sqrt \pi \alpha)^{m-1}}{m}  \binom {m}{k+1} \tilde J_{m,k+1}\left(\frac{m-1+\alpha}{2}\right),
\end{split}
\end{equation}
where $\tilde J_{m,k+1}(\alpha)$ is defined as in Theorem~\ref{theo:f_vect_prime}.
\end{theorem}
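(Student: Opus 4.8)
The plan is to let $n\to\infty$ in the exact formula of Theorem~\ref{theo:f_vect_prime} with $\beta=\frac{d+\alpha}{2}$ (so that $2\beta-d=\alpha$), and then separately to identify the resulting limit with $\E f_k(\conv\Pi_{d,\alpha})$. For the identification, recall that $\tilde f_{d,\beta}(x)\sim\tilde c_{d,\beta}\|x\|^{-d-\alpha}$ as $\|x\|\to\infty$, so the rescaled sample $\{r_nX_1,\dots,r_nX_n\}$ with $r_n=(n\tilde c_{d,\beta})^{-1/\alpha}$ converges in distribution, as a point process on $\R^d\setminus\{0\}$, to $\Pi_{d,\alpha}$; by the methods of~\cite{convex_hull_sphere} this upgrades to convergence of the whole combinatorial type of the convex hulls, in particular $f_k(\conv\{r_nX_i\})\to f_k(\conv\Pi_{d,\alpha})$ in distribution, and to the convergence of first moments $\E f_{d-1}(\tilde P_{n,d}^{(d+\alpha)/2})\to\E f_{d-1}(\conv\Pi_{d,\alpha})$. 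Since $f_k$ is scale invariant, $f_k(\tilde P_{n,d}^{(d+\alpha)/2})=f_k(\conv\{r_nX_i\})$; the convergence of first moments for $k=d-1$ makes $\{f_{d-1}(\tilde P_{n,d}^{(d+\alpha)/2})\}_n$ uniformly integrable, and because every proper face of a simplicial polytope lies in a facet, which is a $(d-1)$-simplex carrying $\binom{d}{k+1}$ faces of dimension $k$, one has the deterministic bound $f_k(P)\le\binom{d}{k+1}f_{d-1}(P)$; hence $\{f_k(\tilde P_{n,d}^{(d+\alpha)/2})\}_n$ is uniformly integrable too and $\E f_k(\tilde P_{n,d}^{(d+\alpha)/2})\to\E f_k(\conv\Pi_{d,\alpha})$.

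It then remains to evaluate $\lim_{n\to\infty}\E f_k(\tilde P_{n,d}^{(d+\alpha)/2})$ from Theorem~\ref{theo:f_vect_prime}. The $s$-sum there has only finitely many nonzero terms — those with $m:=d-2s\ge k+1$ — so the limit may be taken term by term and it suffices to evaluate $\lim_{n\to\infty}\binom{n}{m}\tilde I_{n,m}(\alpha)$ for each such $m$. First I would analyse \eqref{eq:I_definition_prime} by extreme-value asymptotics: writing $F(t)$ for the inner cumulative integral, one has $1-F(t)\sim\frac{\tilde c_{1,(\alpha+1)/2}}{\alpha}t^{-\alpha}$ as $t\to+\infty$, so $F(t)^{n-m}$ is negligible unless $t$ is of order $n^{1/\alpha}$; substituting $t=\big(n\tilde c_{1,(\alpha+1)/2}/\alpha\big)^{1/\alpha}u$ and passing to the limit under the integral yields
\[
\tilde I_{n,m}(\alpha)\ \sim\ \tilde c_{1,\frac{\alpha m+1}{2}}\Big(\tfrac{n\,\tilde c_{1,(\alpha+1)/2}}{\alpha}\Big)^{-m}\int_0^{\infty}u^{-\alpha m-1}e^{-u^{-\alpha}}\,\dd u ,\qquad n\to\infty,
\]
and the integral equals $\Gamma(m)/\alpha$ after the substitution $w=u^{-\alpha}$. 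Combining this with $\binom{n}{m}\sim n^m/m!$ gives
\[
\lim_{n\to\infty}\binom{n}{m}\tilde I_{n,m}(\alpha)=\frac{\tilde c_{1,\frac{\alpha m+1}{2}}}{\tilde c_{1,\frac{\alpha+1}{2}}^{\,m}}\cdot\frac{\alpha^{m-1}}{m}.
\]
Inserting $\tilde c_{1,\gamma}=\Gamma(\gamma)/\big(\sqrt\pi\,\Gamma(\gamma-\tfrac12)\big)$, re-indexing the sum via $m=d-2s$ (which produces the parity condition $m\equiv d\Mod 2$ and the range $m\in\{k+1,\dots,d\}$), and noting that $\beta-s-\tfrac12=\frac{m-1+\alpha}{2}$ appears as the argument of $\tilde J_{m,k+1}$, one recovers precisely the right-hand side of \eqref{eq:E_f_k_beta_prime_to_poisson}.

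The Gamma-function bookkeeping and the two changes of variable are routine. The two genuine technical points are, first, producing an integrable dominating function to justify passing the limit inside the integral in the asymptotics of $\tilde I_{n,m}(\alpha)$ — this calls for splitting the $t$-range into the bulk, where $F(t)$ stays bounded away from $1$ and $F(t)^{n-m}$ decays geometrically, the window $t\asymp n^{1/\alpha}$, where one has uniform control of $(1+t^2)^{-(\alpha m+1)/2}\,t^{\alpha m+1}$ and of $F(t)^{n-m}$ by some $e^{-c\,u^{-\alpha}}$, and the far tail, which is exponentially small — and, second, the uniform integrability input needed for the Poisson identification, which we import from~\cite{convex_hull_sphere} together with the elementary facet bound above. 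Neither of these should constitute a serious difficulty.
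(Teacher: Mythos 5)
Your proposal is correct and follows essentially the same route as the paper: the identification $\E f_k(\conv\Pi_{d,\alpha})=\lim_n\E f_k(\tilde P_{n,d}^{(d+\alpha)/2})$ is imported from~\cite{convex_hull_sphere} (your added facet bound $f_k\le\binom{d}{k+1}f_{d-1}$ and uniform-integrability remark are a harmless supplement), and your rescaling $t=\mathrm{const}\cdot n^{1/\alpha}u$ with dominated convergence is exactly the paper's Lemma~\ref{lem:asymptotics_I} applied with $\beta=\alpha m$, followed by the same Gamma-function bookkeeping. The only loose phrase is calling the far tail ``exponentially small'': for $t\gg n^{1/\alpha}$ the integrand is only polynomially small, but it is dominated by the integrable power $u^{-(\alpha m+1)}$ (as in the paper's bound $g_n(u)\le|u|^{-(\beta+1)}$ for $|u|\ge1$), so nothing breaks.
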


\begin{remark}\label{rem:f_d-1_d-2}
For faces of dimensions  $k=d-1$ and $k=d-2$ the result simplifies to
\begin{align*}
\E f_{d-1}(\conv \Pi_{d,\alpha})
&=
\frac 2d (\sqrt \pi \alpha)^{d-1} \frac{\Gamma\left(\frac{d\alpha + 1}{2}\right) \Gamma\left(\frac \alpha 2\right)^{d}}{\Gamma\left(\frac{d\alpha}{2}\right)\Gamma\left(\frac{\alpha+1}{2}\right)^{d}},\\
\E f_{d-2}(\conv \Pi_{d,\alpha})
&=
\frac d2 \E f_{d-1}(\conv \Pi_{d,\alpha}).
\end{align*}
The first formula was obtained in~\cite[Corollary 2.13]{convex_hull_sphere}, whereas the second one is valid for every simplicial polytope (even almost surely without expectations) by the Dehn--Sommerville equations. Note that although the intensity function used here differs by a multiplicative constant from that used in~\cite{convex_hull_sphere}, the expected $f$-vector is the same in both cases because in the case of power-law intensity, multiplying intensity by a constant is equivalent to spatial rescaling the Poisson point process, which does not affect the $f$-vector of the convex hull.
\end{remark}

\subsection{Convex hulls on the half-sphere}\label{subsec:convex_hulls_half_sphere}

Let us mention an application of the above results to the random spherical convex hulls first studied by  B\'ar\'any, Hug, Reitzner and Schneider~\cite{barany_etal}. Let $U_1,\ldots,U_n$ be independent random points  distributed uniformly on the $d$-dimensional upper half-sphere $\bS^d_+ = \bS^d\cap \{x_0\geq 0\}\subset\RRd1$.  Let $C_n:=\pos (U_1,\ldots,U_n)$ be the random cone generated by these points.
The $f$-vector of the random spherical polytope  $C_n\cap \bS^d_+$ has the same distribution as the $f$-vector of $\tilde P_{n,d}^{\beta}$ with $\beta= (d+1)/2$; see~\cite{bonnet_etal,convex_hull_sphere}.
Theorem~\ref{theo:f_vect_prime} with $m:=d-2s$ yields
$$
\E f_k(C_n\cap \bS^d_+)
=
2 \sum_{\substack{m\in \{k+1,\ldots,d\}\\ m\equiv d \Mod{2}}} \binom n {m} \binom {m}{k+1} \tilde I_{n,m}(1) \tilde J_{m,k+1}\left(\frac{m}{2}\right)
$$
for all $k\in\{0,\ldots,d-1\}$. Further, Theorem~\ref{theo:monoton_prime} implies that the expected $f$-vector of the random spherical polytope $C_n\cap \bS^d_+$ increases component-wise with $n$. In fact, the limits to which these vectors converge, as $n\to\infty$, are finite. Namely, in~\cite{convex_hull_sphere} it was shown that
$$
\lim_{n\to\infty} \E f_{k+1}^\ell(C_n) = \lim_{n\to\infty} \E f_{k}^\ell(C_n\cap \bS^d_+) = \E f_{k}^\ell(\conv \Pi_{d,1}),
$$
for $k\in\{0,1,\ldots,d-1\}$ and any $\ell\in\N$.
Using Theorem~\ref{theo:f_vect_poisson}, we arrive at the following asymptotic formula for the particular case $\ell=1$:
$$
\lim_{n\to\infty} \E f_{k}(C_n\cap \bS^d_+)
=
2\sqrt \pi \sum_{\substack{m\in \{k+1,\ldots,d\}\\ m\equiv d \Mod{2}}} \frac{\Gamma\left(\frac{m + 1}{2}\right)}{\Gamma\left(\frac{m}{2}\right)}  \frac{\pi^{m-1}}{m}   \binom {m}{k+1} \tilde J_{m,k+1}\left(\frac{m}{2}\right).
$$
The cases $k\in\{0, d-1,d-2\}$ were treated in~\cite{barany_etal}. In particular, the limit for $k=0$ was expressed in~\cite[Theorem~7.1]{barany_etal} in terms of certain constant $C(d)$ given as a multiple integral in~\cite[Equation (22)]{barany_etal}. The limit of the complete expected $f$-vector was expressed in~\cite[Theorem~2.4]{convex_hull_sphere} in terms of multiple integrals that can be interpreted as absorption probabilities of the Poisson point process $\Pi_{d,1}$. Our approach provides an alternative formula in terms of the quantities $\tilde J_{m,\ell}(\alpha)$.

Let us finally comment on the first equality in~\eqref{eq:E_f_k_beta_prime_to_poisson}. It follows from standard results in extreme-value theory that if $X_1,X_2,\ldots$ are i.i.d.\ points in $\R^d$ with density $\tilde f_{d,\frac {d+\alpha}{2}}$, then the point process
$$
\sum_{j=1}^n \delta_{n^{-1/\alpha} X_j}
$$
converges, as $n\to\infty$, to the Poisson point process $\Pi_{d,\alpha}$ weakly on the space of locally finite integer-valued measures on $\R^d\backslash\{0\}$ endowed with the vague topology, see \cite[Equation (4.6)]{convex_hull_sphere}. From this one can deduce the  distributional convergence
$$
f_k (\tilde P_{n,d}^{\frac{d+\alpha}{2}}) \todistr f_k (\conv \Pi_{d,\alpha})
$$
together with the convergence of all moments from the continuous mapping theorem as in~\cite{convex_hull_sphere}. In fact, in~\cite{convex_hull_sphere} we considered only the case $\alpha=1$ (which was tailored towards the application to convex hulls on the half-sphere~\cite{barany_etal}), but the same method of proof applies to any $\alpha>0$. In the proof of Theorem~\ref{theo:f_vect_poisson}, which will be given in Section~\ref{sec:proof_poisson_limit}, we shall prove the second line of~\eqref{eq:E_f_k_beta_prime_to_poisson} by using the explicit formula for the expected $f$-vector of a beta' polytope.


\begin{figure}[t]
\begin{center}
\includegraphics[width=0.49\textwidth ]{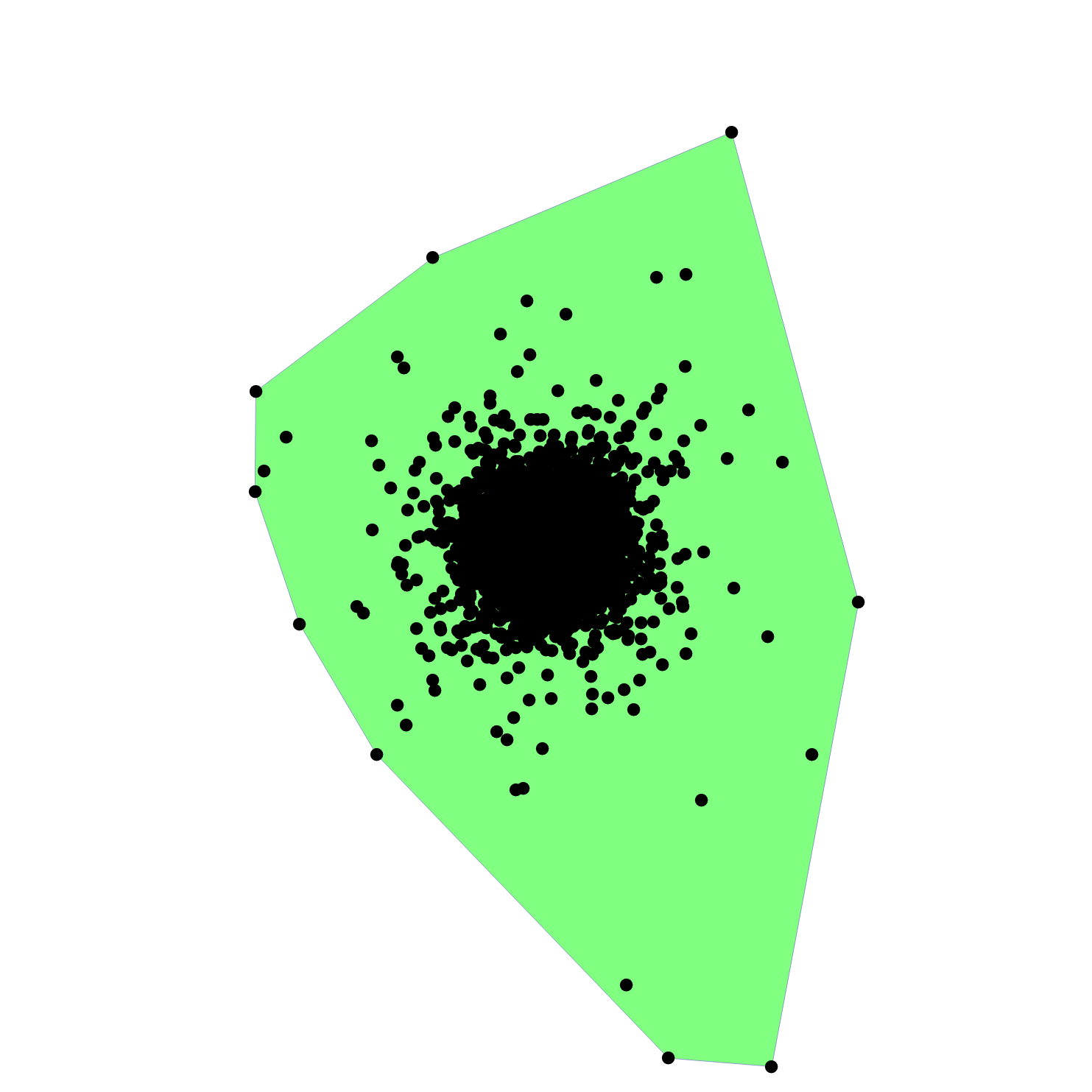}
\includegraphics[width=0.49\textwidth]{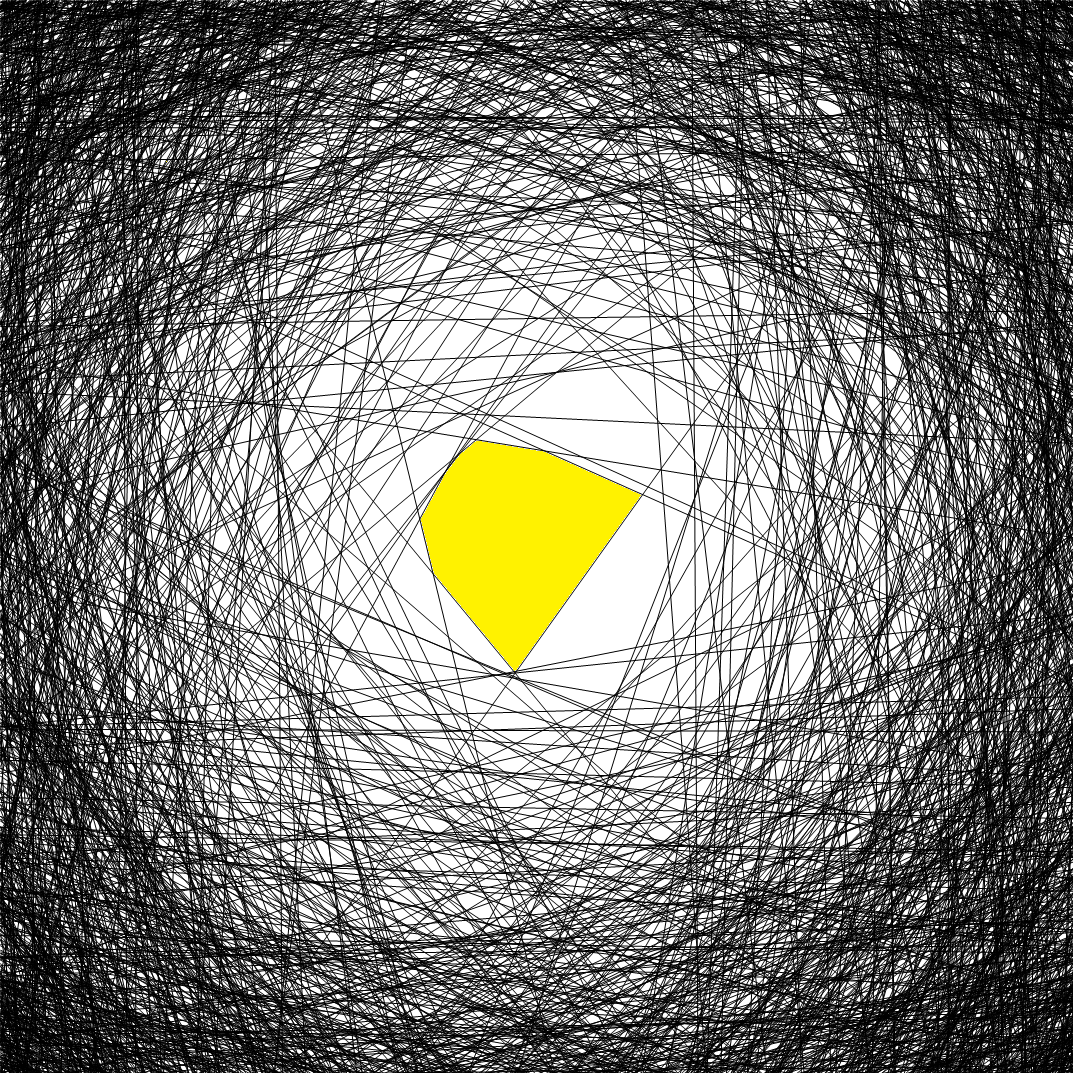}
\end{center}
\caption{
Left: The Poisson point process $\Pi_{2,3}$ on $\R^2$ with intensity $\|x\|^{-5}$, together with its convex hull. Right: The dual Poisson line tessellation, together with the corresponding zero cell.
}
\label{fig:poisson}
\end{figure}

\subsection{Poisson hyperplane tessellations}\label{subsec:PHT}
Using essentially convex duality, Poisson point processes can be transformed into Poisson hyperplane processes. To state this precisely,  fix a space dimension $d\geq 2$ as well as a parameter $\alpha>0$, the so-called \textit{distance exponent}. For an intensity parameter $\gamma\in(0,\infty)$ we define a $\sigma$-finite measure $\Theta_{d,\alpha,\gamma}$ on the affine Grassmannian $A(d,d-1)$ by
\begin{equation}\label{eq:Theta_def}
\Theta_{d,\alpha,\gamma}(\,\cdot\,)
:=
\frac {2\gamma}{\omega_d}\int_{\SS^{d-1}} \int_{0}^\infty \ind_{\{H(u,t)\in\,\cdot\,\}}\,t^{\alpha-1}\,\dint t \sigma(\dint u),
\end{equation}
where $H(u,t)$ is the hyperplane $H(u,t)=\{x\in\R^d:\langle x,u\rangle=t\}$ and $\sigma$ denotes the spherical Lebesgue measure on $\SS^{d-1}$ with total mass $\omega_d=2\pi^{d/2}/\Gamma(\frac d2)$.
Note that $\Theta_{d,1,1}$ coincides with the motion invariant Haar measure $\mu_{d-1}$ on $A(d,d-1)$ to be defined in~\eqref{eq:DefMeasureMuk} below.
In this paper, by a \textit{Poisson hyperplane process} with distance exponent $\alpha$ and intensity parameter $\gamma$ we understand a Poisson point process $\eta_{d,\alpha,\gamma}$ on the space $A(d,d-1)$ with intensity measure $\Theta_{d,\alpha,\gamma}$; see the right panel of Figure~\ref{fig:poisson}.  The random hyperplanes in $\eta_{d,\alpha,\gamma}$ dissect $\R^d$ into almost surely countably many random convex polyhedra, which are called cells in the sequel. The collection of these random polyhedra is known as a \textit{Poisson hyperplane tessellation}. Our focus lies on the \textit{zero cell}
$$
Z_{d,\alpha,\gamma} := \bigcap_{H\in\eta_{d,\alpha,\gamma}} H^-
$$
of such a random tessellation, where for a hyperplane $H\in A(d,d-1)$ we denote by $H^-$ the closed half-space determined by $H$ that contains the origin. We emphasize that the probability law of $Z_{d,\alpha,\gamma}$ is invariant under rotations and that $Z_{d,\alpha,\gamma}$ is almost surely bounded and hence a random polytope. Zero cells of Poisson hyperplane tessellations of this type have attracted considerable attention in the literature, see \cite{HoermannHugReitznerThaele,HugSchneider07LargeCells} as well as the references cited therein. In particular, this class contains two prominent special cases. Namely, $Z_{d,1,\gamma}$ corresponds to the zero cell of a stationary and isotropic Poisson hyperplane tessellation with intensity
$$
{1\over 2}\E\sum_{H\in\eta_{d,\alpha,\gamma}}\ind_{\{H\cap\BB^d\neq\varnothing\}} = {1\over 2}\Theta_{d,1,\gamma}(\{H\in A(d,d-1):H\cap\BB^d\neq\varnothing\}) = {\gamma\over \omega_d}\int_{\SS^{d-1}}\int_0^1 \dd t\sigma(\dint u) = \gamma
$$
(see \cite[Equation (4.27)]{SW08}), while $Z_{d,d,\gamma}$ has the same distribution as the typical cell of a stationary Poisson--Voronoi tessellation of a suitable constant intensity that can be expressed in terms of $\gamma$ and $d$. Both models are classical objects in stochastic geometry and well studied; we refer to \cite{LastPenrosePPPBook,SW08} for further background material.

It is a crucial observation that the zero cells $Z_{d,\alpha,\gamma} $ are \textit{dual} to convex hulls of Poisson point processes of the type discussed in Section~\ref{subsec:PPP}. To make this precise, we recall from \cite[Chapter 5.1]{MatousekDiscreteGeometryBook} that if $K\subset\R^d$ is a convex body, its dual (or polar body) $K^\circ$ is defined as
$$
K^\circ:=\{y\in\R^d:\langle x,y\rangle\leq 1\text{ for all }x\in K\}.
$$
In particular, if $P\subset\R^d$ is a polytope with $0$ in its interior, it is well known that, for all $k\in\{0,1,\ldots,d-1\}$,
\begin{equation}\label{eq:DualityFvector}
f_k(P) = f_{d-k-1}(P^\circ),
\end{equation}
see~\cite[Corollary~2.13]{ziegler_book_lec_on_poly}. 

To state the next theorem we recall from Section~\ref{subsec:PPP} that $\Pi_{d,\alpha}$ denotes a Poisson point process on $\R^d\setminus\{0\}$ with power-law intensity function $x\mapsto\|x\|^{-d-\alpha}$.

\begin{theorem}\label{thm:PoissonHyperplanes}
Fix $\alpha>0$. Then, $Z_{d,\alpha,\omega_d/2}^\circ$ has the same distribution as $\conv\Pi_{d,\alpha}$. Also,
$$
\E f_k(Z_{d,\alpha,\gamma}) = \E f_{d-k-1}(\conv\Pi_{d,\alpha})
$$
for every $\gamma\in(0,\infty)$.
\end{theorem}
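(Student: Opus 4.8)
The plan is to exhibit an explicit duality between the Poisson hyperplane process $\eta_\alpha$ and the Poisson point process $\Pi_{d,\alpha}$, and then to push this through to the zero cell and its polar. First I would recall the standard bijection between hyperplanes not passing through the origin and points of $\R^d\setminus\{0\}$: to a hyperplane $H(u,t)=\{x:\langle x,u\rangle=t\}$ with $u\in\SS^{d-1}$, $t>0$, associate the point $\Phi(H(u,t)):=u/t\in\R^d\setminus\{0\}$, i.e. the unique point $y$ with $H=\{x:\langle x,y\rangle=1\}$ and with $H^-=\{x:\langle x,y\rangle\le 1\}$ the half-space containing $0$. Under this map, $H^-$ is precisely the half-space $\{x:\langle x,y\rangle\le 1\}$, which equals $\{x:\langle x, \Phi(H)\rangle\le 1\}$. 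The first key step is therefore the identity
$$
Z_\alpha=\bigcap_{H\in\eta_\alpha}H^-=\bigcap_{y\in\Phi(\eta_\alpha)}\{x:\langle x,y\rangle\le 1\}=\big(\conv\Phi(\eta_\alpha)\big)^\circ ,
$$
valid almost surely (the last equality is the elementary fact that the polar of a set coincides with the polar of its convex hull, together with $K^{\circ\circ}=K$ for closed convex $K$ containing the origin; here one uses that $\conv\Phi(\eta_\alpha)$ has the origin in its interior, which holds a.s. since $\Phi(\eta_\alpha)$ accumulates at the origin from all directions). Taking polars on both sides gives $Z_\alpha^\circ=\conv\Phi(\eta_\alpha)$.

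The second key step is to compute the image intensity measure $\Phi_*\Theta_\alpha$ and check it equals the intensity of $\Pi_{d,\alpha}$, namely $\|y\|^{-d-\alpha}\,\dd y$. By the mapping theorem for Poisson processes, $\Phi(\eta_\alpha)$ is then a Poisson point process with that intensity, hence equal in distribution to $\Pi_{d,\alpha}$, and consequently $Z_\alpha^\circ\overset{d}{=}\conv\Pi_{d,\alpha}$. For the computation: writing $y=\Phi(H(u,t))=u/t$, one has $\|y\|=1/t$ and $u=y/\|y\|$, so the change of variables from $(u,t)\in\SS^{d-1}\times(0,\infty)$ to $y\in\R^d\setminus\{0\}$ is the inversion $y\mapsto y/\|y\|^2$ composed with polar coordinates. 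Concretely, $t=1/r$ with $r=\|y\|$ gives $t^{\alpha-1}\,\dd t=r^{1-\alpha}\cdot r^{-2}\,\dd r=r^{-\alpha-1}\,\dd r$, while $\dd y=r^{d-1}\,\dd r\,\sigma(\dd u)$, so $\sigma(\dd u)\,t^{\alpha-1}\,\dd t=r^{-d-\alpha}\,\dd y$. Hence
$$
\Phi_*\Theta_\alpha(\dd y)=\frac{2}{\omega_d}\,\|y\|^{-d-\alpha}\,\dd y .
$$
This differs from the intensity of $\Pi_{d,\alpha}$ only by the global constant $2/\omega_d$; since the intensity is a power law, multiplying it by a positive constant $c$ amounts to the deterministic rescaling $y\mapsto c^{1/\alpha}y$ of the process (as already noted in Remark~\ref{rem:f_d-1_d-2}), and such a homothety changes neither the $f$-vector of the convex hull nor — being an invertible linear map — the combinatorial type of $Z_\alpha^\circ$. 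Therefore $Z_\alpha^\circ$ and $\conv\Pi_{d,\alpha}$ agree in distribution up to a homothety, which suffices for the statement; alternatively one absorbs the constant by noting $Z_\alpha$ was defined via $\Theta_\alpha$ and re-parametrising $\alpha\mapsto\alpha$ leaves a free scale that we may fix so that the constants match exactly.

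Finally, the $f$-vector identity follows immediately: by Step~1, $f_k(Z_\alpha)=f_k\big((\conv\Phi(\eta_\alpha))^\circ\big)$, and since $\conv\Phi(\eta_\alpha)$ is a.s. a polytope with the origin in its interior (using the result of~\cite{convex_hull_sphere} that $\conv\Pi_{d,\alpha}$ is a.s. a polytope, transported via the distributional identity), the polarity relation~\eqref{eq:DualityFvector} gives $f_k(Z_\alpha)=f_{d-k-1}(\conv\Phi(\eta_\alpha))$; taking expectations and using $\Phi(\eta_\alpha)\overset{d}{=}\Pi_{d,\alpha}$ yields $\E f_k(Z_\alpha)=\E f_{d-k-1}(\conv\Pi_{d,\alpha})$. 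The main obstacle I anticipate is not any single computation but the careful bookkeeping of the a.s. regularity properties needed to apply polarity: one must confirm that $\conv\Pi_{d,\alpha}$ is a.s. a bounded polytope with $0$ in its interior (so that $Z_\alpha$ is a.s. a polytope, recovering the stated boundedness of $Z_\alpha$), that no degeneracies occur on a null set, and that the relation $f_k(P)=f_{d-k-1}(P^\circ)$ may be invoked face-by-face; all of these are either standard or already recorded in~\cite{convex_hull_sphere}, but they should be cited explicitly rather than glossed over.
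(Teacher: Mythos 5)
Your argument is correct and follows essentially the same route as the paper: set up the point--hyperplane duality, invoke the mapping theorem for Poisson processes, match intensities, and finish with the polarity relation \eqref{eq:DualityFvector}; the paper simply runs the map in the opposite direction (it applies $x\mapsto\{y:\langle x,y\rangle=1\}$ to $\Pi_{d,\alpha}$) and matches intensities via rotational invariance together with the distribution of distances to the origin, rather than by your direct change of variables. The one substantive point of divergence is the normalizing constant: with the normalization of \eqref{eq:Theta_def} your computation correctly gives $\Phi_*\Theta_\alpha(\dd y)=\tfrac{2}{\omega_d}\|y\|^{-d-\alpha}\,\dd y$, so that strictly one only gets $Z_\alpha^\circ\eqdistr (2/\omega_d)^{1/\alpha}\,\conv\Pi_{d,\alpha}$, whereas the paper's distance-distribution comparison in \eqref{eq:DistanceDistributionComp2} tacitly drops the prefactor $2/\omega_d$ and asserts exact equality; your homothety argument is exactly the right way to see that this constant is harmless for the $f$-vector identity, which is the part of the theorem used later. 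Your alternative remark about ``absorbing the constant by re-parametrising $\alpha$'' is vacuous as stated (the definitions leave no free scale), but it is not needed: the dilation argument already yields $\E f_k(Z_\alpha)=\E f_{d-k-1}(\conv\Pi_{d,\alpha})$, and exact distributional equality would require rescaling one of the two processes (or adjusting the constant in \eqref{eq:Theta_def}). The a.s.\ regularity points you flag (that $\conv\Pi_{d,\alpha}$ is a.s.\ a polytope with the origin interior, so that polarity and \eqref{eq:DualityFvector} apply) are indeed exactly what the paper imports from \cite{convex_hull_sphere}.
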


\begin{remark}
The first part of Theorem \ref{thm:PoissonHyperplanes} can be rephrased as follows. The polytope $Z_{d,\alpha,1}^\circ$ has the same distribution as the convex hull of a Poisson point process on $\R^d\setminus\{0\}$ whose intensity function is given by $x\mapsto{2\over\omega_d}\|x\|^{-d-\alpha}$. More generally, it becomes clear from the proof of Theorem~\ref{thm:PoissonHyperplanes} that $Z_{d,\alpha,\gamma}^\circ$ has the same distribution as the convex hull of a Poisson point process on $\R^d\setminus\{0\}$ with intensity function $x\mapsto{2\gamma\over\omega_d}\|x\|^{-d-\alpha}$, for all $\gamma\in(0,\infty)$.
\end{remark}

Theorem~\ref{thm:PoissonHyperplanes} together with Theorem~\ref{theo:f_vect_poisson} yields an explicit description of the expected $f$-vector of the zero cells $Z_{d,\alpha,\gamma}$ for any $\gamma\in(0,\infty)$.
For example, Remark~\ref{rem:f_d-1_d-2} yields
\begin{align}
\E f_0(Z_{d,\alpha,\gamma})
&= \E f_{d-1}(\conv\Pi_{d,\alpha})
= \frac 2d (\sqrt \pi \alpha)^{d-1} \frac{\Gamma\left(\frac{d\alpha + 1}{2}\right) \Gamma\left(\frac \alpha 2\right)^{d}}{\Gamma\left(\frac{d\alpha}{2}\right)\Gamma\left(\frac{\alpha+1}{2}\right)^{d}},
\label{eq:E_f_0_Z_alpha1}\\
\E f_1(Z_{d,\alpha,\gamma})
&= \frac d2 \E f_0(Z_{d,\alpha,\gamma}),\label{eq:E_f_0_Z_alpha2}
\end{align}
whereas the formulae for the remaining components are more complicated and involve terms of the form $\tilde J_{m,d-k}(\beta)$, namely
$$
\E f_k(Z_{d,\alpha,\gamma})
= 2\sum_{\substack{m\in \{d-k,\ldots,d\}\\ m\equiv d \Mod{2}}} {\Gamma({m\alpha+1\over 2})\over\Gamma({m\alpha\over 2})}\left({\Gamma({\alpha\over 2})\over\Gamma({\alpha+1\over 2})}\right)^{m}{(\sqrt{\pi}\alpha)^{m-1}\over m}{m\choose d-k}\tilde{J}_{m,d-k}\left({m-1+\alpha\over 2}\right).
$$
This adds to the existing literature, where only  formulae for $\E f_0(Z_{d,\alpha,\gamma})$ with $\alpha\in \{1,d\}$ (and, since $Z_{d,\alpha,\gamma}$ is a simple polytope with probability one, also for $\E f_1(Z_{d,\alpha,\gamma})$) are available; see~\cite[Theorem 10.4.9]{SW08} and~\cite[Theorem~7.2]{moller}.
Also, in~\cite[Corollary 3.3]{HoermannHugReitznerThaele} a formula for $\E f_0(Z_{d,\alpha,\gamma})$ with general $\alpha>0$ was given in terms of a certain multiple integral which was not clear how to evaluate.

Additionally to the above formulae for $\E f_k(Z_{d,\alpha,\gamma})$, we claim that for the zero cell of a stationary and isotropic Poisson hyperplane tessellation in $\R^d$ it holds that
\begin{equation}\label{eq:E_f_d-2}
\E f_{d-2} (Z_{d,1,\gamma}) = \E f_{1}(\conv\Pi_{d,1}) = \frac{1}{2} \binom {d+1}{3} \pi^2.
\end{equation}
Indeed, while the first equality is a particular case of Theorem~\ref{thm:PoissonHyperplanes}, the second one was obtained in~\cite[Theorem~2.4, Remark~2.5]{convex_hull_sphere} by combining a formula from~\cite{barany_etal} with an Efron-type identity proved in~\cite[Theorem~2.8]{convex_hull_sphere}.

Since the expected intrinsic volumes $\E V_{k}(Z_{d,1,\gamma})$ are proportional to $\E f_{d-k}(Z_{d,1,\gamma})$ by an identity due to Schneider~\cite[p.~693]{SchneiderWeightedFaces}, the above yields also formulae for $\E V_k(Z_{d,1,\gamma})$. 

\subsection{Asymptotic results for Poisson hyperplane tessellations}\label{subsec:PHT_asympt}
Next, we shall consider for fixed $k\in\{0,1,2,\ldots\}$ the asymptotic behaviour of $\E f_k(Z_{d,\alpha,\gamma})$, as $d\to\infty$. Since $f_k(Z_{d,\alpha,\gamma})$ is independent from $\gamma\in(0,\infty)$, we just take $\gamma=1$ and write $Z_{d,\alpha}$ for $Z_{d,\alpha,1}$ from now on. While this has already been investigated in \cite{HoermannHugReitznerThaele} on a logarithmic scale, we are able to prove \textit{exact} asymptotic formulae, which strengthen these results. We shall write $a_d\sim b_d$ for two sequences $(a_d)_{d\in\N}$ and $(b_d)_{d\in\N}$ whenever $a_d/b_d\to 1$, as $d\to\infty$.

\begin{theorem}\label{thm:PoissonHyperplanesDtoInfinity}
Fix $k\in\{0,1,2,\ldots\}$. If the distance exponent $\alpha=\alpha(d)$ is such that 
$\inf_{d\in\N} \alpha(d) > 0$, then
$$
\E f_k(Z_{d,\alpha}) \sim {\sqrt{\alpha}\over 2^{k-{1\over 2}}}\left({\Gamma({\alpha\over 2})\over\Gamma({\alpha+1\over 2})}\right)^{d}{(\sqrt{\pi}\,\alpha)^{d-1}\over k!}d^{k-{1\over 2}},
\qquad d\to\infty.
$$
\end{theorem}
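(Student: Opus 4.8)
The plan is to combine the duality of Theorem~\ref{thm:PoissonHyperplanes} with the explicit formula of Theorem~\ref{theo:f_vect_poisson} and then to isolate a single leading term. Since $\E f_k(Z_\alpha)=\E f_{d-k-1}(\conv\Pi_{d,\alpha})$, Theorem~\ref{theo:f_vect_poisson} applied with $d-k-1$ in place of $k$ gives
\[
\E f_k(Z_\alpha)=2\sum_{j\ge 0}\frac{\Gamma\bigl(\tfrac{(d-2j)\alpha+1}{2}\bigr)}{\Gamma\bigl(\tfrac{(d-2j)\alpha}{2}\bigr)}\Bigl(\frac{\Gamma(\alpha/2)}{\Gamma((\alpha+1)/2)}\Bigr)^{d-2j}\frac{(\sqrt\pi\,\alpha)^{d-2j-1}}{d-2j}\binom{d-2j}{d-k}\,\tilde J_{d-2j,\,d-k}\Bigl(\tfrac{d-2j-1+\alpha}{2}\Bigr),
\]
and $\binom{d-2j}{d-k}=\binom{d-2j}{k-2j}$ vanishes unless $2j\le k$, so at most $\lfloor k/2\rfloor+1$ terms remain. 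First I would argue that only $j=0$ matters asymptotically. Writing $\alpha_0:=\inf_{d}\alpha(d)>0$, and using $0\le\tilde J_{m,\ell}\le 1$, the $j$-th term is at most $\binom{d-2j}{k-2j}$ times factors that, by Stirling together with the elementary bound $\sup_{\alpha\ge\alpha_0}\pi^{-1}\alpha^{-2}\bigl(\Gamma(\tfrac{\alpha+1}{2})/\Gamma(\tfrac\alpha2)\bigr)^2<\infty$ (the function is continuous on $[\alpha_0,\infty)$ and tends to $0$), are $O(d^{-2j})$ times the corresponding factors of the $j=0$ term; invoking the (still to be proved) limit $\tilde J_{d,d-k}\bigl(\tfrac{d-1+\alpha}{2}\bigr)\to 2^{-k}>0$, this makes each $j\ge 1$ term $O(d^{-2j})$ relative to the $j=0$ one, uniformly for $\alpha\ge\alpha_0$. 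With the Stirling asymptotics $\Gamma(\tfrac{d\alpha+1}{2})/\Gamma(\tfrac{d\alpha}{2})\sim\sqrt{d\alpha/2}$ and $\binom{d}{k}\sim d^k/k!$, the $j=0$ term — hence $\E f_k(Z_\alpha)$ — is asymptotic to $\sqrt{2\alpha}\,d^{\,k-1/2}(k!)^{-1}\bigl(\Gamma(\tfrac\alpha2)/\Gamma(\tfrac{\alpha+1}{2})\bigr)^{d}(\sqrt\pi\,\alpha)^{d-1}\,\tilde J_{d,d-k}\bigl(\tfrac{d-1+\alpha}{2}\bigr)$. So the theorem reduces, via $\sqrt{2\alpha}\cdot 2^{-k}=\sqrt\alpha\,2^{1/2-k}$, to the following claim: for every fixed codimension $c\ge 1$ and every $\alpha=\alpha(m)$ with $\inf_m\alpha(m)>0$,
\[
\tilde J_{m,\,m-c}\Bigl(\frac{m-1+\alpha}{2}\Bigr)\ \longrightarrow\ 2^{-c}\qquad(m\to\infty).
\]

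For $c=1$ the relevant tangent cone is a half-line, so $\tilde J_{m,m-1}(\cdot)=1/2$ and there is nothing to do; assume $c\ge 2$. I would use the representation $Z\eqdistr S^{-1/2}G$ of a beta$'$ vector $Z\sim\tilde f_{m-1,(m-1+\alpha)/2}$, where $G\sim N(0,I_{m-1})$ and $S\sim\chi^2_\alpha$ are independent (mixing the Gaussian over $S\sim\Gamma(\alpha/2,1/2)$ produces the density $\propto(1+\|x\|^2)^{-(m-1+\alpha)/2}$). Realising $Z_1,\dots,Z_m$ by i.i.d.\ pairs $(G_i,S_i)$ and putting $F=[Z_1,\dots,Z_{m-c}]$, $L=\lin(Z_2-Z_1,\dots,Z_{m-c}-Z_1)$, the internal angle $\beta(F,[Z_1,\dots,Z_m])$ is the normalised solid angle, inside the $c$-dimensional space $L^\perp$, of the cone $\pos(w_{m-c+1},\dots,w_m)$ with $w_i=P_{L^\perp}(Z_i-Z_1)$. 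Conditioning on everything except the directions of the $G_i$ with $i>m-c$, and using that $L^\perp$ splits orthogonally into $\bigl(L^\perp\cap\lin(Z_1,\dots,Z_{m-c})\bigr)$ and $\lin(Z_1,\dots,Z_{m-c})^\perp$, yields the exact decomposition
\[
w_i=S_i^{-1/2}v_i-a\,\hat\ell,\qquad a:=\dist\bigl(0,\aff(Z_1,\dots,Z_{m-c})\bigr),
\]
where $\hat\ell$ is a unit vector spanning the line $L^\perp\cap\lin(Z_1,\dots,Z_{m-c})$ and, conditionally, $v_{m-c+1},\dots,v_m$ are i.i.d.\ standard Gaussian vectors in $L^\perp\cong\R^c$. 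Provided $a\,S_i^{1/2}\to 0$ in probability, the normalised directions of the $w_i$ converge to those of the $v_i$, so the random cone converges in distribution to $\pos(v_1,\dots,v_c)$ with $v_1,\dots,v_c$ i.i.d.\ $N(0,I_c)$; the solid angle being bounded and continuous at nondegenerate generators, $\tilde J_{m,m-c}\bigl(\tfrac{m-1+\alpha}{2}\bigr)\to\E\bigl[\text{solid angle of }\pos(v_1,\dots,v_c)\bigr]$. This last expectation is $\mathbb{P}\bigl(g\in\pos(v_1,\dots,v_c)\bigr)$ with $g,v_1,\dots,v_c$ i.i.d.\ $N(0,I_c)$, and a Wendel-type symmetry argument gives $2^{-c}$: flipping the sign of a single $v_i$ flips the corresponding coordinate of the a.s.\ unique (up to scalar) dependence $\mu_0 g+\sum_i\mu_i v_i=0$, so after normalising $\mu_0>0$ the vector $(\operatorname{sgn}\mu_1,\dots,\operatorname{sgn}\mu_c)$ is uniform on $\{\pm1\}^c$, and $g\in\pos(v_1,\dots,v_c)$ corresponds to the single pattern $(-1,\dots,-1)$.

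The hard part will be the negligibility of the defect $a$, and doing it uniformly in $\alpha$. Starting from the classical formula $a^2=1/(\mathbf 1^\top\Gamma^{-1}\mathbf 1)$ for the distance from the origin to an affine hull, with $\Gamma$ the Gram matrix of $Z_1,\dots,Z_{m-c}$, and writing $\Gamma=D\,(G^\top G)\,D$ with $D=\operatorname{diag}(S_1^{-1/2},\dots,S_{m-c}^{-1/2})$ and $G=[G_1\,|\,\cdots\,|\,G_{m-c}]$ the $(m-1)\times(m-c)$ Gaussian matrix, one gets $\mathbf 1^\top\Gamma^{-1}\mathbf 1=v^\top(G^\top G)^{-1}v$ with $v=(S_1^{1/2},\dots,S_{m-c}^{1/2})^\top$ independent of $G$, and the goal is $v^\top(G^\top G)^{-1}v\gg\max_{i>m-c}S_i=O_{\mathbb{P}}(\alpha)$ in probability. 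The crude bound $v^\top(G^\top G)^{-1}v\ge\|v\|^2/\lambda_{\max}(G^\top G)=\Theta_{\mathbb{P}}(\alpha)$ is \emph{not} sufficient; one must exploit that $G^\top G$ is a \emph{nearly square} Wishart matrix, whose smallest eigenvalue is of order $1/m$ with small-ball control, $\mathbb{P}(\lambda_{\min}(G^\top G)<\varepsilon/m)\to 0$ as $\varepsilon\to 0$ uniformly in $m$ (Rudelson--Vershynin / Sankar--Spielman--Teng bounds $\mathbb{P}(\sigma_{\min}(G)\le t)\lesssim(t\sqrt m)^{c}$, or the explicit small-deviation density of the least eigenvalue of a Wishart matrix with $c-1$ extra degrees of freedom). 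Combined with the fact that, writing $G=U\Sigma W^\top$ for the singular value decomposition, $W$ is Haar-distributed and independent of $\Sigma$, so that $W^\top v$ is uniform on the sphere of radius $\|v\|$ and independent of the singular values, bounding below by the term belonging to the smallest singular value gives $v^\top(G^\top G)^{-1}v\gtrsim\lambda_{\min}(G^\top G)^{-1}\cdot\|v\|^2/m\gtrsim\alpha m$ with high probability, which beats $O_{\mathbb{P}}(\alpha)$; hence $a^2\max_{i>m-c}S_i=O_{\mathbb{P}}(1/m)\to 0$, so $a\,S_i^{1/2}\to 0$ in probability, and the reduction above closes. All estimates are uniform over $\alpha\ge\alpha_0$ — which is precisely the role of the hypothesis $\inf_d\alpha(d)>0$.
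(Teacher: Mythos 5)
Your argument is correct, and its first half (duality via Theorem~\ref{thm:PoissonHyperplanes}, the explicit sum from Theorem~\ref{theo:f_vect_poisson}, Stirling, and domination of the $j=0$ term using $0\le\tilde J\le 1$ plus uniformity of the Gamma-ratio in $\alpha\ge\alpha_0$) coincides with the paper's proof. Where you genuinely diverge is in the key limit $\tilde J_{m,m-c}\bigl(\tfrac{m-1+\alpha}{2}\bigr)\to 2^{-c}$, i.e.\ \eqref{eq:JTildeAsymptotic}. The paper does not go back to the simplex at all: it reuses the canonical-decomposition representation from Section~\ref{subsec:external_angles_proofs}, by which the internal angle is distributed as the solid angle of $\pos(V_1-V,\ldots,V_c-V)$ with \emph{independent} beta$'$ points $V_1,\ldots,V_c$ (parameter $\kappa(d)=\tfrac{\alpha+c}{2}$) and an apex $V$ whose beta$'$ parameter $\eta(d)\asymp\alpha d$ blows up, so $V\Rightarrow 0$ (after a Gaussian rescaling in the regime $\kappa(d)\to\infty$, handled by a subsequence/case analysis in $\alpha(d)$), and central symmetry of the limiting generators gives $2^{-c}$. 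You instead work directly in the simplex via the scale-mixture representation $Z=S^{-1/2}G$, reduce everything to showing $a\,S_i^{1/2}\to 0$ in probability (with $a$ the distance from $0$ to the affine hull of the face), and prove that negligibility through least-singular-value/small-ball estimates for a nearly square Wishart matrix, finishing with the same Wendel-type symmetry. Both routes are valid and both use the hypothesis $\inf_d\alpha(d)>0$ in the same place; the paper's route is shorter because the needed representation is already available, and it avoids random matrix theory entirely, while yours is self-contained at the level of the simplex but imports comparatively heavy machinery. In fact your ``hard part'' is available for free inside the paper: by Corollary~\ref{cor:distance_distr}(b), $a^2\sim\Beta'\bigl(\tfrac c2,\tfrac{(m-c)\alpha}{2}\bigr)$ is independent of $S_i\sim\chi^2_\alpha$, so $\E[a^2S_i]\sim c/(m-c)\to 0$ uniformly for $\alpha\ge\alpha_0$, and Markov's inequality replaces the entire Rudelson--Vershynin/SVD argument; with that substitution your proof becomes as short as the paper's. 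Two cosmetic points: the phrase ``conditioning on everything except the directions of the $G_i$'' should be ``conditioning on $(G_j,S_j)_{j\le m-c}$ and $(S_i)_{i>m-c}$'' (you keep the full projected Gaussians $v_i$, not just their directions), and your reduction statement should note the trivial cases $c=0$ (for $k=0$, where $\tilde J_{m,m}\equiv 1$) alongside $c=1$; neither affects correctness.
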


We remark that Theorem~\ref{thm:PoissonHyperplanesDtoInfinity} is consistent with Theorems~1.2 and~3.21 of~\cite{HoermannHugReitznerThaele}, which yield the limit relation
$$
\lim_{d\to\infty}\sqrt[d]{\E f_k(Z_{d,\alpha})} = {\sqrt{\pi}\alpha\Gamma({\alpha\over 2})\over\Gamma({\alpha+1\over 2})}
$$
for a fixed $\alpha>0$ (corresponding in the special case that $\alpha=1$ to the zero-cell of a stationary and isotropic Poisson hyperplane tessellation) as well as
$$
\lim_{d\to\infty}d^{-{1\over 2}}\sqrt[d]{\E f_k(Z_{d,d})} = \sqrt{2\pi}
$$
in the case that $\alpha=d$ (which corresponds to the typical cell of a stationary Poisson--Voronoi tessellation). Of course, both relations easily follow from Theorem~\ref{thm:PoissonHyperplanesDtoInfinity} as well, but Theorem~\ref{thm:PoissonHyperplanesDtoInfinity} is in fact much more precise. For example,  we obtain the asymptotic formulae
$$
k! \,\E f_k(Z_{d,\alpha}) \sim
\begin{cases}
\pi^{d-\frac 12} \left(\frac d 2\right)^{k-\frac 12},
&\text{ if } \alpha=1,\\
\eee^{1/4} {2^{{d+1\over 2}-k}\,\pi^{d-1\over 2}}\,d^{{d\over 2}+k-1},
&\text{ if } \alpha=d,
\end{cases}
$$
for any fixed $k\in\{0,1,2,\ldots\}$. The term $\eee^{1/4}$ in the second line appears because of the expansion
$$
\frac{\Gamma({d\over 2})}{\Gamma({d+1\over 2})} = \sqrt{\frac 2 d} \left(1 + \frac{1+o(1)}{4d} \right), \qquad d\to\infty.
$$

\subsection{Organization of the paper}
The rest of the paper is organized as follows.
In Section~\ref{sec:facts} we introduce the necessary notation and recall some facts from stochastic and integral geometry. Section~\ref{sec:properties_beta} contains the canonical decomposition for beta and beta' distribution which is of major importance in our proofs, which in turn are collected in Section~\ref{sec:proofs}.

\section{Notation and facts from stochastic and integral geometry}\label{sec:facts}

\subsection{General notation}

For $d\geq 1$ we let $\R^d$ be the $d$-dimensional Euclidean space with the standard scalar product $\lan\,\cdot\,,\,\cdot\,\ran$ and the associated norm $\|\,\cdot\,\|$. We let $\BB^d=\{x\in\R^d:\|x\|\leq 1\}$ be the Euclidean unit ball and $\SS^{d-1}=\{x\in\R^d:\|x\|=1\}$ be the corresponding $(d-1)$-dimensional unit sphere. Let also $\lambda_d$ denote the $d$-dimensional Lebesgue measure and $\sigma$ denote the spherical Lebesgue measure on $\SS^{d-1}$ which is normalized in such a way that $\sigma(\SS^{d-1})=\omega_d:={2\pi^{d/2}\over\Gamma(d/2)}$.

The \textit{convex} (respectively, \textit{positive}, \textit{linear}, \textit{affine}) \textit{hull} of a set $A\subset\R^d$ is the smallest convex set (respectively, convex cone, linear subspace, affine subspace) containing the set $A$ and is denoted by $\conv A$ (respectively, $\pos A$, $\lin A$, $\aff A$). The convex hull of  finitely  many points $x_1,\ldots,x_n$ is also denoted by $[x_1,\ldots,x_n]$.

We let $(\Omega,\cF,\P)$ be our underlying probability space, which we implicitly assume to be rich enough to carry all the random objects we consider. Expectation (i.e.\ integration) with respect to $\P$ is denoted by $\E$. For two random variables $X$ and $Y$ we write $X\overset{d}{=}Y$ if $X$ and $Y$ have the same probability law. Moreover, for random variables $X,X_1,X_2,\ldots$ we shall write $X_n\overset{d}{\longrightarrow}X$ if $X_n$ converges to $X$ in distribution, as $n\to\infty$.

\subsection{Polytopes and their faces}
A \textit{polytope} is a convex hull of finitely many points, while a \textit{polyhedron} is an intersection of finitely many closed half-spaces. We recall that a bounded polyhedron is also a polytope. The dimension $\dim P$ of a polyhedron $P$ is the dimension of its affine hull $\aff P$.
The \textit{$f$-vector} of a $d$-dimensional polyhedron $P\subseteq\R^d$ is defined by
$$
\mathbf{f} (P) := (f_0(P), \ldots,f_{d-1}(P)),
$$
where $f_k(P)$ is the number of $k$-dimensional faces of $P$.
The set of $k$-dimensional faces of a polyhedron $P$ is denoted by $\cF_k(P)$, so that $f_k(P)$ is the cardinality of $\cF_k(P)$.

\subsection{Grassmannians and the Blaschke--Petkantschin formula}

We denote by $G(d,k)$, respectively $A(d,k)$, the set of $k$-dimensional linear, respectively affine, subspaces of $\R^d$, where $k\in \{0,\ldots,d\}$.
The unique probability measure on $G(d,k)$ which is invariant under the action of the orthogonal group ${\rm SO}(d)$ is denoted by $\nu_k$. The affine Grassmannian $A(d,k)$ is endowed with the infinite measure $\mu_k$ defined by
\begin{equation}\label{eq:DefMeasureMuk}
\mu_k(\,\cdot\,) := \int_{G(d,k)}\int_{L^\bot}\ind_{\{L+x\in\,\cdot\,\}}\,\lambda_{L^\bot}(\dd x) \nu_{k}(\dd L),
\end{equation}
where $L^\bot$ is the orthogonal complement of $L$ and $\lambda_{L^\bot}$ is the Lebesgue measure on $L^\bot$, see~\cite[pp.~168--169]{SW08}.

The next theorem, to be found in~\cite[Theorem 7.2.7]{SW08}, allows to replace integration over all $k$-tuples of points in $\R^d$ by the double integration first over all $(k-1)$-dimensional affine subspaces $E$ and then over all $k$-tuples inside $E$. An important feature is the appearance of a term involving $\Delta(x_1,\ldots,x_k)$, the $(k-1)$-dimensional volume of the simplex $[x_1,\ldots,x_k]$.

\begin{proposition}[Affine Blaschke--Petkantschin formula]\label{theo:blaschke_petk}
For all $k\in \{1,\ldots,d+1\}$ and every non-negative Borel function $f:(\R^d)^{k}\to\R$ we have
\begin{multline*}
\int_{(\R^d)^{k}}f(x_1,\ldots,x_k) \, \lambda_d^k(\dd(x_1,\ldots,x_k))
\\
= B(d,k) \int_{A(d,k-1)}\int_{E^{k}}f(x_1,\ldots,x_k)\,\Delta^{d-k+1}(x_1,\ldots,x_k)\, \lambda_E^k(\dd(x_1,\ldots,x_k))
\mu_{k-1}(\dd E).
\end{multline*}
Here, $\lambda_E$ is the Lebesgue measure on the affine subspace $E$, and
$$
B(d,k) = ((k-1)!)^{d-k+1}\,{\omega_{d-k+2}\cdots\omega_d\over\omega_1\cdots\omega_{k-1}},
\qquad
B(d,1) = 1.
$$
\end{proposition}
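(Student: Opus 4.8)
The plan is to deduce the affine formula from the \emph{linear} Blaschke--Petkantschin formula \cite[Theorem~7.2.1]{SW08}: for $1\le q\le d$ and every non-negative Borel $g\colon(\R^d)^q\to\R$,
\begin{equation*}
\int_{(\R^d)^{q}} g\,\dd\lambda_d^{q}
= b_{d,q}\int_{G(d,q)}\int_{L^{q}} g(v_1,\ldots,v_q)\,\nabla_q(v_1,\ldots,v_q)^{\,d-q}\,\lambda_L^{q}(\dd(v_1,\ldots,v_q))\,\nu_q(\dd L),
\end{equation*}
where $\nabla_q$ denotes the $q$-dimensional volume of the parallelepiped spanned by its arguments and $b_{d,q}=\omega_{d-q+1}\cdots\omega_d/(\omega_1\cdots\omega_q)$. (If a self-contained argument is wanted, the linear formula itself is obtained by a QR/Gram--Schmidt change of coordinates, expressing $v_1,\ldots,v_q$ in an orthonormal frame of $L=\lin(v_1,\ldots,v_q)$ and disintegrating $\lambda_d^{q}$ into $\nu_q$, the Lebesgue measure on $L^{q}$, and the invariant measure on orthonormal $q$-frames, the remaining Jacobian being $\nabla_q^{\,d-q}$; see \cite[Section~7.2]{SW08}.)

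For $k=1$ the assertion is the trivial identity $\int f\,\dd\lambda_d=\int f\,\dd\lambda_d$ (with $\Delta\equiv1$, $\mu_0=\lambda_d$, $B(d,1)=1$), so assume $k\ge 2$. For $\lambda_d^k$-almost every $(x_1,\ldots,x_k)$ the difference vectors $v_i:=x_i-x_k$, $i=1,\ldots,k-1$, are linearly independent and span the direction space $L:=\lin(v_1,\ldots,v_{k-1})\in G(d,k-1)$ of $E:=\aff(x_1,\ldots,x_k)$; moreover $\nabla_{k-1}(v_1,\ldots,v_{k-1})=(k-1)!\,\Delta(x_1,\ldots,x_k)$, as one sees by translating the simplex $[0,v_1,\ldots,v_{k-1}]$ by $x_k$. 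Substituting $x_i=x_k+v_i$, I would use Fubini to write $\lambda_d^k$ as integration over $(v_1,\ldots,v_{k-1})\in(\R^d)^{k-1}$ followed by integration over $x_k\in\R^d$, and then apply the linear formula above with $q=k-1$ to the inner integral. This produces the factor $((k-1)!\,\Delta(x_1,\ldots,x_k))^{\,d-k+1}$, the constant $b_{d,k-1}$, and an extra integration $\int_{G(d,k-1)}\int_{L^{k-1}}(\,\cdot\,)\,\lambda_L^{k-1}(\dd(v_1,\ldots,v_{k-1}))\,\nu_{k-1}(\dd L)$.

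It then remains to reorganize the outer integrals. For fixed $L$ I would decompose $x_k=w+z$ with $w\in L$ and $z\in L^\perp$, so that $\lambda_d(\dd x_k)=\lambda_L(\dd w)\,\lambda_{L^\perp}(\dd z)$ and $E=x_k+L=z+L$. By the very definition~\eqref{eq:DefMeasureMuk} of $\mu_{k-1}$, the iterated integral over $G(d,k-1)$ and then $L^\perp$ collapses into a single integral over $A(d,k-1)$ against $\mu_{k-1}$ (the integrand depends on $(L,z)$ only through $E$, since $L$ is recovered from $E$ as its direction space). The residual inner integration runs over $(w,v_1,\ldots,v_{k-1})\in L^{k}$, and the map $(w,v_1,\ldots,v_{k-1})\mapsto(z+w+v_1,\ldots,z+w+v_{k-1},\,z+w)$ is a bijection $L^{k}\to E^{k}$ whose linear part has determinant $1$ (it is lower unitriangular once the coordinate $x_k$ is listed first); hence it transports $\lambda_L(\dd w)\,\lambda_L^{k-1}(\dd(v_1,\ldots,v_{k-1}))$ to $\lambda_E^{k}(\dd(x_1,\ldots,x_k))$ while leaving $f$ and $\Delta$ unchanged. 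Collecting constants gives the claimed identity with $B(d,k)=((k-1)!)^{d-k+1}b_{d,k-1}=((k-1)!)^{d-k+1}\,\omega_{d-k+2}\cdots\omega_d/(\omega_1\cdots\omega_{k-1})$.

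The only genuine input is the linear Blaschke--Petkantschin formula; granting it, the passage to the affine version is bookkeeping, and the step that deserves care is checking that the change of variables in the last paragraph is measure-preserving, so that no spurious Jacobian creeps in and the constant $B(d,k)$ emerges exactly as stated.
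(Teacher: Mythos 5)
Your proposal is correct. Note that the paper does not prove this proposition at all — it is quoted verbatim from \cite[Theorem 7.2.7]{SW08} — so there is no internal proof to compare against; your argument is essentially the standard textbook reduction of the affine formula to the linear Blaschke--Petkantschin formula. The bookkeeping checks out: with $q=k-1$ the linear formula gives the exponent $d-q=d-k+1$, the identity $\nabla_{k-1}(x_1-x_k,\ldots,x_{k-1}-x_k)=(k-1)!\,\Delta(x_1,\ldots,x_k)$ produces the factor $((k-1)!)^{d-k+1}$, the splitting $x_k=w+z$ with $w\in L$, $z\in L^\perp$ together with the translation-type change of variables on $L^k$ is indeed measure preserving, the resulting inner integrand depends on $(L,z)$ only through $E=L+z$ so the outer integration collapses into $\mu_{k-1}$ exactly as in~\eqref{eq:DefMeasureMuk}, and $((k-1)!)^{d-k+1}\,b_{d,k-1}$ equals the stated $B(d,k)$; the degenerate cases $k=1$ and $k=d+1$ are also covered.
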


\subsection{Cones and solid angles}
In this paper, the term cone always refers  to a \textit{polyhedral cone}, that is an intersection of finitely many closed half-spaces whose boundaries pass through the origin. In particular any polyhedral cone is a polyhedron. The \textit{solid angle} of a cone $C\subset \R^d$ is defined as
$$
\alpha(C) := \P[N\in C],
$$
where $N$ is a random vector having a standard normal distribution on the linear hull of $C$. The \textit{polar} (or \textit{dual}) \textit{cone} of $C$ is defined by
$$
C^\circ := \{v\in \R^d\colon \langle v, z\rangle\leq 0 \text{ for all } z\in C\}.
$$
The \textit{tangent cone} $T(F,P)$ at a face $F$ of a full-dimensional polytope $P\subseteq\R^d$ is defined as
$$
T(F,P) := \{v\in\R^d\colon  x_0 + v\eps \in P \text{ for some } \eps>0\},
$$
where $x_0$ is any point in the relative interior of $F$ (the definition does not depend on the choice of $x_0$). The \textit{normal cone} of $F$ is the polar to the tangent cone, that is
$$
N(F,P) := T^\circ (F,P) = \{v\in \R^d\colon \langle v, z-x_0\rangle \leq 0 \text{ for all } z\in P\}.
$$
The \textit{internal} and \textit{external} angles at a face $F$ of $P$ are defined as the solid angles of the tangent and the normal cones, respectively:
$$
\beta(F,P) := \alpha(T(F,P)), \qquad \gamma(F,P) := \alpha (N(F,P)).
$$
For further background material we refer, for example, to \cite{AmelunxenLotzDCG17,GruenbaumGA,GruenbaumBook}.

\subsection{Conic intrinsic volumes and Grassmann angles}

In this section we recall the definitions of the conic intrinsic volumes and Grassmann angles of cones and refer to \cite{AmelunxenLotzDCG17,ALMT14,glasauer_phd,SW08} for further information.
For a polyhedral cone $C\subset\R^{d}$ we denote by $\cF_k(C)$ the set of its $k$-dimensional faces,  where $k\in\{0,\ldots,d\}$.  Note that $C$ is the disjoint union of the relative interiors of its faces, where the \textit{relative interior} $\relint F$ of a face $F$ is the interior of $F$ with respect to its affine hull $\aff F$ as the ambient space.

If $x\in\R^{d}$ is a point,  we let $\pi_C(x)$ denote the \textit{metric projection} of $x$ onto $C$, that is the uniquely determined point $y\in C$ minimizing the distance $\|x-y\|$.
For $k\in\{0,\ldots,d\}$, the $k$-th \textit{conic intrinsic volume} $\upsilon_k(C)$ is defined by
$$
\upsilon_k(C) := \sum_{F\in\cF_k(C)} \P[\pi_C(N)\in\relint(F)],
$$
where $N$ is a standard Gaussian random vector in $\R^{d}$.  If $\cF_k(C)=\varnothing$, we define $\upsilon_k(C):=0$.  In other words, $\upsilon_k(C)$ is the probability that the metric projection of $N$ to $C$ lies in the relative interior of a $k$-dimensional face of $C$.  For convenience also define $\upsilon_k(C):=0$ for all integers $k\notin\{0,\ldots,d\}$.
For example, if $C$ is a $k$-dimensional linear subspace, then $\upsilon_{k}(C)=1$, while all other conic intrinsic volumes vanish.

By definition, the conic intrinsic volumes are non-negative and their sum equals one. Moreover, they satisfy the so-called \textit{Gauss--Bonnet formula} \cite[Equation~(5.3)]{ALMT14}
\begin{equation}\label{eq:gauss_bonnet}
\upsilon_0(C)+ \upsilon_2(C) + \ldots = \upsilon_1(C)+ \upsilon_3(C) +\ldots = \frac 12,
\end{equation}
provided $C$ is not a linear subspace. Observe that  $\upsilon_d(C)$ is just the solid angle of $C$, provided that $\dim\aff C=d$.
The so-called \textit{half-tail functionals}~\cite{ALMT14} of a polyhedral cone $C\subset\R^d$ are defined as
\begin{equation}\label{eq:h_def}
h_{k}(C) := \upsilon_{k}(C) + \upsilon_{k+2}(C) + \ldots, \quad k\in \{0,\ldots, d\}.
\end{equation}
Note that the above sums only contain finitely many non-zero terms. It is also natural to put
$$
h_{d+1}(C) := h_{d+2}(C) := \ldots := 0.
$$
If $C$ is not a linear subspace, the \textit{conic Crofton formula} states that
\begin{equation}\label{eq:ConicalIntVolGrassmannAngle}
h_{k}(C) = {1\over 2}\P[C\cap L_{d+1-k}\neq\{0\}], \qquad k\in\{1,\ldots,d+1\},
\end{equation}
where $L_{d+1-k}\in G(d,d+1-k)$ is a random linear subspace distributed according to the probability measure $\nu_{d+1-k}$;
see~\cite[p.~257]{mcmullen}, \cite[pages~261--262]{SW08} or~\cite[Equation (2.10)]{AmelunxenLotzDCG17}.
The numbers on the right-hand side of~\eqref{eq:ConicalIntVolGrassmannAngle} are called the \textit{Grassmann angles} of the cone $C$ and were introduced by Gr\"unbaum~\cite{GruenbaumGA}.




\subsection{Random projections of polytopes}

Let $P\subset\R^N$ be a polytope and $L_d\in G(N,d)$ be a random subspace distributed according to the probability measure $\nu_d$, where $d\in \{1,\ldots,N\}$. Then $\Pi_{d}P$ stands for the random polytope in $L_d$ that arises as the orthogonal projection of $P$ onto $L_d$. The next result we recall is due to \citet{AS92}, its proof is based on the conic Crofton formula~\eqref{eq:ConicalIntVolGrassmannAngle}. It says that the expected $f$-vector of the random polytope $\Pi_d P$ can be expressed in terms of the internal and external angles of the original polytope $P$. We emphasize that the sum on the right hand side of \eqref{eq:AffSchn} below only contains finitely many non-zero terms.

\begin{proposition}[Expected $f$-vectors of random projections]\label{theo:affentranger_schneider}
Let $P\subset \R^N$ be a polytope. Then, for all $d\in \{1,\ldots,\dim P\}$ and $k\in\{0,\ldots,d-1\}$,
\begin{equation}\label{eq:AffSchn}
\E f_k(\Pi_d P) = 2 \sum_{s=0}^\infty \sum_{G\in \cF_{d-1-2s}(P)} \gamma(G,P) \sum_{F\in \cF_k(G)} \beta(F,G).
\end{equation}
\end{proposition}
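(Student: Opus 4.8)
The plan is to express both sides of~\eqref{eq:AffSchn} in terms of the conic Crofton formula applied to the tangent cones of $P$. First I would recall the combinatorial description of the faces of a random projection: for a generic $d$-subspace $L_d\in G(N,d)$ and a polytope $P\subset\R^N$, a $k$-face $F$ of $P$ projects to a $k$-face of $\Pi_d P$ precisely when the orthogonal complement $L_d^\perp$ meets the normal cone $N(F,P)$ only at the origin, equivalently when $L_d$ hits the tangent cone $T(F,P)$ in a subspace that still \enquote{sees} $F$ as a face; more precisely, the standard fact (going back to \citet{AS92}, cf.\ also the discussion around the conic Crofton formula) is that the image $\Pi_d F$ is a $k$-face of $\Pi_d P$ if and only if the relative interior of $F$ survives, which happens with probability that can be written through Grassmann angles of the normal cone $N(F,P)$. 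Thus
$$
\E f_k(\Pi_d P) \;=\; \sum_{F\in\cF_k(P)} \P\bigl[\Pi_d F \in \cF_k(\Pi_d P)\bigr],
$$
and the main task is to evaluate each probability.

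The key step is to identify $\P[\Pi_d F\in\cF_k(\Pi_d P)]$ with a Grassmann angle of the tangent cone $T(F,P)$, and then to expand that Grassmann angle via~\eqref{eq:ConicalIntVolGrassmannAngle} into a sum of conic intrinsic volumes $\upsilon_{d-1-2s}$ of a cone derived from $T(F,P)$. Next I would use the product structure of conic intrinsic volumes under the face decomposition of a cone: the conic intrinsic volume $\upsilon_j$ of a tangent cone decomposes as a sum over faces $G$ of $P$ containing $F$, of products $\gamma(G,P)\,\beta(F,G)$ — this is the conic analogue of the angle-sum relations of \citet{SommervilleAnglesSums} / McMullen, and is exactly the mechanism by which external angles $\gamma(G,P)$ of the \enquote{large} face and internal angles $\beta(F,G)$ of $F$ relative to $G$ appear. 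Summing over $F\in\cF_k(P)$, reorganizing the double sum so that the outer index runs over $G\in\cF_{d-1-2s}(P)$ and the inner over $F\in\cF_k(G)$, and tracking the factor $2$ coming from the Gauss--Bonnet normalization in~\eqref{eq:gauss_bonnet} and~\eqref{eq:ConicalIntVolGrassmannAngle}, yields the right-hand side of~\eqref{eq:AffSchn}. The finiteness of the sum is automatic since $\cF_{d-1-2s}(P)=\emptyset$ once $d-1-2s<k$, i.e.\ once $2s > d-1-k$.

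The main obstacle I anticipate is the bookkeeping in the second step: one must be careful that the \enquote{projection survival} event for a $k$-face is governed not merely by a single Grassmann angle but by an alternating-type identity, and that the passage from $h_{d-k}(\,\cdot\,)$-type quantities to the intrinsic-volume expansion correctly pairs the codimension of the projection ($N-d$) with the dimension $d-1-2s$ appearing in~\eqref{eq:AffSchn}. Concretely, the delicate point is to verify that
$$
\P[\Pi_d F\in\cF_k(\Pi_d P)] \;=\; 2\sum_{s\geq 0}\ \sum_{\substack{G\in\cF_{d-1-2s}(P)\\ F\subseteq G}} \gamma(G,P)\,\beta(F,G),
$$
which is really the content of the proposition restricted to a single face; the rest is summation. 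I would handle this by invoking the conic kinematic/Crofton formula for the pair $(N(F,P), L_d^\perp)$, rewriting $\P[N(F,P)\cap L_{N-d}\ne\{0\}]$ via~\eqref{eq:h_def} and~\eqref{eq:ConicalIntVolGrassmannAngle}, and then applying the face-decomposition of $\upsilon_j(N(F,P))$ together with the duality $\upsilon_j(C)=\upsilon_{\dim C - j}(C^\circ)$ to convert normal-cone intrinsic volumes into the products of external angles $\gamma(G,P)$ and internal angles $\beta(F,G)$ that appear in the statement. Alternatively, and perhaps more cleanly, I would simply cite \citet{AS92} for this identity, since the excerpt already attributes the proposition to them, and present the above as the conceptual outline of their argument.
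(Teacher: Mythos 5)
The paper does not prove this proposition at all: it is recalled from Affentranger and Schneider \cite{AS92}, with only the remark that the proof rests on the conic Crofton formula \eqref{eq:ConicalIntVolGrassmannAngle}. So your fallback option of simply citing \cite{AS92} is exactly what the paper does, and your outline of their argument has the right architecture: survival probabilities of individual $k$-faces, Grassmann angles, the Gauss--Bonnet relation \eqref{eq:gauss_bonnet} producing the factor $2$, the decomposition of conic intrinsic volumes of tangent cones into products $\beta(F,G)\gamma(G,P)$, and a final exchange of summations.

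There is, however, a genuine flaw in the one step you yourself flag as delicate: your survival criterion pairs the wrong cone with the wrong subspace. The correct statement (used verbatim in this paper inside the proof of Theorem~\ref{theo:expected_conic_tangent}) is that, generically, $\Pi_d F\in\cF_k(\Pi_d P)$ if and only if $T(F,P)\cap L_d^{\perp}=\{0\}$, equivalently if and only if $L_d$ meets the relative interior of $N(F,P)$: the tangent cone is tested against the \emph{kernel} of the projection, or the normal cone against the \emph{image}. Your event $\{N(F,P)\cap L_{N-d}\neq\{0\}\}$ tests the normal cone against the kernel, which is a different event. Pushing it through \eqref{eq:h_def}, \eqref{eq:ConicalIntVolGrassmannAngle} and \eqref{eq:gauss_bonnet} would give a survival probability $2\sum_{s\geq 0}\upsilon_{d-1-2s}(N(F,P))=2\sum_{s\geq 0}\upsilon_{N-d+1+2s}(T(F,P))$, hence a sum over faces $G\supseteq F$ of dimension $N-d+1+2s$ instead of $d-1-2s$; the formula \eqref{eq:AffSchn} does not come out. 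With the correct pairing, either route works and both give the same thing: $\P[T(F,P)\cap L_{N-d}\neq\{0\}]=2\sum_{s\geq 0}\upsilon_{d+1+2s}(T(F,P))$, whose complement is $2\sum_{s\geq 0}\upsilon_{d-1-2s}(T(F,P))$ by \eqref{eq:gauss_bonnet}, or directly $\P[N(F,P)\cap L_d\neq\{0\}]=2\sum_{s\geq 0}\upsilon_{N-d+1+2s}(N(F,P))=2\sum_{s\geq 0}\upsilon_{d-1-2s}(T(F,P))$ by polarity. Then the standard identity $\upsilon_j(T(F,P))=\sum_{G\in\cF_j(P),\,G\supseteq F}\beta(F,G)\,\gamma(G,P)$ (the $j$-faces of $T(F,P)$ are the cones $T(F,G)$ with $G\supseteq F$, having solid angle $\beta(F,G)$ and external angle $\gamma(G,P)$) gives the single-face identity, and summing over $F\in\cF_k(P)$ and reorganizing yields \eqref{eq:AffSchn}. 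Two smaller corrections: the polarity relation should read $\upsilon_j(C)=\upsilon_{N-j}(C^{\circ})$ with $N$ the ambient dimension (not $\dim C-j$), and the first display also uses that, almost surely, every $k$-face of $\Pi_d P$ is the projection of exactly one $k$-face of $P$, which is where general position of $L_d$ enters.
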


\section{Properties of beta and beta' distributions}\label{sec:properties_beta}

\subsection{Identification of affine subspaces}\label{subsec:identification}
Sometimes it will be convenient to identify every affine subspace of $\R^d$ with the Euclidean space of the corresponding dimension. To make this precise, we recall that $A(d,k)$ is the set of $k$-dimensional affine subspaces of $\R^d$. For an affine subspace $E \in A(d, k)$ we denote by $\pi_E: \R^d\to E$ the orthogonal projection onto $E$ and by $p(E)=\pi_E(0)=\argmin_{x\in E}\|x\|$ the projection of the origin on $E$. For every affine subspace $E\in A(d,k)$ let us fix an isometry $I_E:E\to \R^k$ such that $I_E(p(E))=0$.  The exact choice of the isometries $I_E$ is not important (essentially due to the rotational invariance of the beta and beta' distributions). We only require that $(x,E) \mapsto I_E(\pi_E(x))$ defines  a Borel measurable map from $\R^d \times A(d,k)$ to $\R^k$, where we supply $\R^d$, $\R^k$ and $A(d,k)$ with their standard Borel $\sigma$-algebras; see~\cite[Chapter 13.2]{SW08} for the case of $A(d,k)$.


\subsection{Projections and distances}
The next lemma, taken from  \cite[Lemma 4.3]{beta_polytopes}, states that the beta and beta$^\prime$-distributions on $\R^d$ yield distributions of the same type (but with different parameters) when projected orthogonally onto arbitrary linear subspaces.

\begin{lemma}[Orthogonal projections]\label{lem:projection}
Denote by $\pi_L: \R^d\rightarrow L$ the orthogonal projection onto a $k$-dimensional linear subspace $L\in G(d,k)$, where $k\in \{1,\ldots,d\}$.
	\begin{itemize}
		\item[(a)] If the random point $X$ has distribution $f_{d,\beta}$ for some $\beta\geq -1$, then $I_L(\pi_L (X))$ has distribution $f_{k,\b+\frac{d-k}{2}}$.
		\item[(b)] If the random point $X$ has density $\tilde{f}_{d,\beta}$ for some $\beta>\frac d2$, then $I_L(\pi_L(X))$ has density $\tilde{f}_{k,\b-\frac{d-k}{2}}$.
	\end{itemize}
\end{lemma}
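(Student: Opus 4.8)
The statement to prove is Lemma~\ref{lem:projection} on orthogonal projections of beta and beta' distributions. The plan is to reduce the $d$-dimensional projection to a one-step argument by factoring the density and integrating out the orthogonal complement directly.

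First I would fix a linear subspace $L \in G(d,k)$ and, using the rotational invariance of both densities together with the measurability conditions on the isometries $I_L$ from Section~\ref{subsec:identification}, reduce to the case where $L = \R^k \times \{0\}^{d-k} \subset \R^d$, so that $\pi_L(x) = (x_1,\ldots,x_k)$ and $I_L$ is the identity on $L$. Writing $x = (y,z)$ with $y \in \R^k$ and $z \in \R^{d-k}$, the density of $\pi_L(X)$ at $y$ is the marginal $\int_{\R^{d-k}} f_{d,\beta}(y,z)\, \dd z$. For part (a) this is $c_{d,\beta}\int_{\R^{d-k}} (1 - \|y\|^2 - \|z\|^2)^\beta \ind_{\{\|y\|^2 + \|z\|^2 < 1\}}\, \dd z$; for part (b) it is $\tilde c_{d,\beta}\int_{\R^{d-k}} (1 + \|y\|^2 + \|z\|^2)^{-\beta}\, \dd z$.

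Next I would evaluate the inner integral. For (a), on the set $\|y\| < 1$ substitute $z = \sqrt{1 - \|y\|^2}\, w$ with $w \in \R^{d-k}$, giving $(1-\|y\|^2)^{\beta + (d-k)/2} \int_{\|w\|<1} (1-\|w\|^2)^\beta\, \dd w$; the remaining integral is a finite constant (it equals $1/c_{d-k,\beta}$ by the normalization in~\eqref{eq:def_f_beta}), so the marginal is proportional to $(1-\|y\|^2)^{\beta + (d-k)/2} \ind_{\{\|y\|<1\}}$, which is exactly the shape of $f_{k, \beta + (d-k)/2}$. For (b), substitute $z = \sqrt{1+\|y\|^2}\, w$ to get $(1+\|y\|^2)^{-\beta + (d-k)/2}\int_{\R^{d-k}} (1+\|w\|^2)^{-\beta}\, \dd w$, a finite constant precisely when $\beta > (d-k)/2$, which holds since $\beta > d/2$; the marginal is thus proportional to $(1+\|y\|^2)^{-(\beta - (d-k)/2)}$, the shape of $\tilde f_{k, \beta - (d-k)/2}$. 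In both cases, since a probability density is determined by its functional form, the proportionality constant must be the correct normalizing constant $c_{k,\beta+(d-k)/2}$ resp.\ $\tilde c_{k,\beta - (d-k)/2}$, so no separate computation of Gamma-function identities is needed.

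There is no serious obstacle here; the only points requiring a little care are the measurability reduction via the isometries $I_L$ (handled by the setup in Section~\ref{subsec:identification} and the rotational invariance of the densities) and checking the integrability condition in case (b), namely that $\beta > d/2$ indeed guarantees $\beta - (d-k)/2 > k/2$ so that the projected density is again a genuine beta' density. Alternatively, one can simply cite~\cite[Lemma 4.3]{beta_polytopes} as the excerpt does, but the above self-contained argument via the substitutions $z = \sqrt{1 \mp \|y\|^2}\, w$ is short enough to include directly.
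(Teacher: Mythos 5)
Your marginalization argument is correct, and it is essentially the standard proof: the paper itself does not prove this lemma but cites \cite[Lemma 4.3]{beta_polytopes}, whose argument is precisely the computation you give (reduce by rotational invariance to a coordinate subspace, integrate out the orthogonal complement via $z=\sqrt{1\mp\|y\|^2}\,w$, and identify the normalizing constant for free because the result is a probability density). Your integrability check in part (b), that $\beta>d/2$ forces both finiteness of the constant and $\beta-\frac{d-k}{2}>\frac k2$, is also right.

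One caveat is worth a sentence in your write-up: part (a) is stated for $\beta\geq-1$, and by the paper's standing convention $\beta=-1$ means the uniform distribution on $\bS^{d-1}$, which has no Lebesgue density on $\R^d$; your marginal-density computation therefore only covers $\beta>-1$. The boundary case follows either by a direct computation with the spherical Lebesgue measure (the projection of the uniform distribution on $\bS^{d-1}$ onto $L$ has density proportional to $(1-\|y\|^2)^{\frac{d-k}{2}-1}$ on $\BB^k$, i.e.\ $f_{k,\frac{d-k}{2}-1}$, consistent with your formula), or by a limiting argument: as $\beta\downarrow-1$ one has $X^{(\beta)}\Rightarrow X^{(-1)}$ and, since $\pi_L$ is continuous, $\pi_L(X^{(\beta)})\Rightarrow\pi_L(X^{(-1)})$, while the densities $f_{k,\beta+\frac{d-k}{2}}$ converge in $L^1$ (Scheff\'e) to $f_{k,\frac{d-k}{2}-1}$ whenever $k<d$; uniqueness of the weak limit then identifies the law of $\pi_L(X^{(-1)})$. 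Apart from this boundary case the proposal is complete.
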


Note that the case $\beta = -1$ is included in Part (a) and follows from the case $\beta>-1$ by weak continuity. In fact, as $\beta\downarrow -1$, the beta distribution on $\R^d$ converges weakly to the uniform distribution on the unit sphere $\SS^{d-1}$.

The next lemma describes the distribution of the squared norm of a random vector with $d$-dimensional beta or beta' distribution. The squared norm turns out to have the usual, one-dimensional beta or beta' distribution.
Recall that a random variable has a classical \textit{beta distribution} with parameters $\alpha_1>0, \alpha_2>0$, denoted by $\Beta(\alpha_1,\alpha_2)$, if its Lebesgue density on $\R$ is
$$
g_{\alpha_1,\alpha_2}(t) = \frac{\Gamma(\alpha_1+\alpha_2)}{\Gamma(\alpha_1)\Gamma(\alpha_2)} t^{\alpha_1-1} (1-t)^{\alpha_2-1}\ind_{\{0<t<1\}}, \qquad t\in \R.
$$
Similarly, a random variable has a classical \textit{beta' distribution} with parameters $\alpha_1>0$, $\alpha_2>0$, denoted by $\Beta'(\alpha_1,\alpha_2)$, if its Lebesgue density on $\R$ is
$$
\tilde g_{\alpha_1,\alpha_2}(t) = \frac{\Gamma(\alpha_1+\alpha_2)}{\Gamma(\alpha_1)\Gamma(\alpha_2)} t^{\alpha_1-1} (1+t)^{-\alpha_1-\alpha_2}\ind_{\{t>0\}}, \qquad t\in \R.
$$
Observe that, up to reparametrization and rescaling, $\Beta'(\alpha_1,\alpha_2)$ coincides with the Fisher--Snedecor $F$-distribution.  The following fact can be directly verified using polar integration.

\begin{lemma}[Squared norm]\label{lem:squared_norm}
Let $X$ be a random vector in $\R^d$.
\begin{itemize}
\item[(a)] If $X$ has the beta density $f_{d,\beta}$ with $\beta>-1$, then $\|X\|^2 \sim \text{Beta}(\frac d2, \beta + 1)$.
\item[(b)] If $X$ has the beta' density $\tilde f_{d,\beta}$ with $\beta>\frac d2$, then $\|X\|^2 \sim \text{Beta}'(\frac d2, \beta - \frac{d}{2})$.
\end{itemize}
\end{lemma}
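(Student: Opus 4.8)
The plan is to exploit the rotational invariance of $f_{d,\beta}$ and $\tilde f_{d,\beta}$ in order to reduce the claim to a one-dimensional computation, and then to carry out the substitution $u=\|x\|^2$.

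First I would pass to polar coordinates. Since $f_{d,\beta}(x)$ (respectively $\tilde f_{d,\beta}(x)$) depends on $x$ only through $\|x\|$, the standard surface-area formula for radially symmetric densities --- equivalently, the decomposition $X=R\Theta$ with $R:=\|X\|$ and $\Theta:=X/\|X\|$ uniform on $\SS^{d-1}$ and independent of $R$ --- shows that $R$ has Lebesgue density
$$
r\mapsto \omega_d\, c_{d,\beta}\, r^{d-1}(1-r^2)^{\beta}\,\ind_{\{0<r<1\}}
$$
in case (a), and density $r\mapsto \omega_d\,\tilde c_{d,\beta}\,r^{d-1}(1+r^2)^{-\beta}\,\ind_{\{r>0\}}$ in case (b), where $\omega_d=2\pi^{d/2}/\Gamma(d/2)$.

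Next I would compute the density of $U:=R^2=\|X\|^2$ via the change of variables $u=r^2$, so that $r^{d-1}\,\dd r=\tfrac12\,u^{d/2-1}\,\dd u$. In case (a) this produces a density proportional to $u^{d/2-1}(1-u)^{\beta}$ on $(0,1)$, which is exactly the functional form of the $\Beta(\tfrac d2,\beta+1)$ density $g_{d/2,\beta+1}$; in case (b) it produces a density proportional to $u^{d/2-1}(1+u)^{-\beta}$ on $(0,\infty)$, which matches $\tilde g_{d/2,\beta-d/2}$ (so that $\alpha_1=\tfrac d2$ and $\alpha_1+\alpha_2=\beta$, i.e.\ $\alpha_2=\beta-\tfrac d2$). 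Since in each case one obtains two probability densities of the same functional form on the same set, they must coincide; alternatively, one may verify the normalizing constants explicitly by combining $\omega_d$, $c_{d,\beta}$ and $\tilde c_{d,\beta}$ from~\eqref{eq:def_f_beta}--\eqref{eq:def_f_beta_prime} with the Euler integral $\int_0^1 u^{a-1}(1-u)^{b-1}\,\dd u=\Gamma(a)\Gamma(b)/\Gamma(a+b)$ and its beta$'$ counterpart.

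There is essentially no genuine obstacle here. The only mildly delicate points are the bookkeeping of the constants if one insists on matching normalizations by hand, and observing that case (a) tacitly requires $\beta>-1$ while case (b) requires $\beta>d/2$, so that the target laws $\Beta(\tfrac d2,\beta+1)$ and $\Beta'(\tfrac d2,\beta-\tfrac d2)$ are well defined --- but these are precisely the parameter ranges in which $f_{d,\beta}$ and $\tilde f_{d,\beta}$ are genuine probability densities to begin with. (One could also deduce (b) from (a) by writing a beta$'$ random vector as a suitably rescaled beta random vector and using that $Y\sim\Beta(\alpha_1,\alpha_2)$ implies $Y/(1-Y)\sim\Beta'(\alpha_1,\alpha_2)$, but the direct computation above is shorter.)
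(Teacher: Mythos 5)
Your proof is correct: the polar decomposition $X=R\Theta$ with $\Theta$ uniform on $\SS^{d-1}$ independent of $R$, followed by the substitution $u=r^2$, yields densities proportional to $u^{d/2-1}(1-u)^{\beta}$ on $(0,1)$ and $u^{d/2-1}(1+u)^{-\beta}$ on $(0,\infty)$, which are exactly the $\Beta(\frac d2,\beta+1)$ and $\Beta'(\frac d2,\beta-\frac d2)$ densities, and the normalizing constants indeed match, since $\omega_d c_{d,\beta}/2=\Gamma(\frac d2+\beta+1)/(\Gamma(\frac d2)\Gamma(\beta+1))$ and $\omega_d\tilde c_{d,\beta}/2=\Gamma(\beta)/(\Gamma(\frac d2)\Gamma(\beta-\frac d2))$. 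The paper itself gives no proof of this lemma but instead cites an external reference, so your short, self-contained computation is a perfectly adequate substitute and is the standard argument one would expect there.
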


\subsection{Canonical decomposition of Ruben and Miles}
Let $X_1,\ldots,X_k$ be i.i.d.\ random points in $\R^d$ with the beta distribution $f_{d,\beta}$. Let $k \in \{1,\ldots, d+1\}$, so that $[X_1,\ldots,X_k]$ is a simplex.  We need a description of the positions of these points inside their own affine hull $A= \aff(X_1,\ldots,X_k)$, together with the position of $A$ inside $\R^d$. The next theorem is due to Ruben and Miles~\cite{ruben_miles}. Since this result is of central importance for what follows and since in \cite{ruben_miles} a different notation is used, we give a streamlined proof.

\begin{theorem}[Canonical decomposition in the beta case]\label{theo:ruben_miles}
Let $X_1,\ldots,X_k$ be i.i.d.\ random points in $\R^d$ with density $f_{d,\beta}$, where $\beta>-1$ and $k\in \{1,\ldots, d+1\}$. Let $A=\aff(X_1,\ldots,X_k)$ be the affine subspace spanned by $X_1,\ldots,X_k$. Let also $p(A)$ be the orthogonal projection of the origin on $A$ and let $h(A) = \|p(A)\|$ denote the distance from the origin to $A$. Observe that $A\cap \BB^d$ is a $(k-1)$-dimensional  ball of radius $\sqrt{1-h^2(A)}$ and consider the points
$$
Z_i := \frac{I_A(X_i)}{\sqrt{1-h^2(A)}} \in \BB^{k-1}, \quad i=1,\ldots,k.
$$
Then,
\begin{itemize}
\item[(a)] The joint Lebesgue density of the random vector $(Z_1,\ldots,Z_k)$ is a constant multiple of
$$\Delta^{d-k+1} (z_1,\ldots,z_k)\prod_{i=1}^k f_{k-1,\beta}(z_i),$$
\item[(b)] the random vector $(Z_1,\ldots,Z_k)$ is stochastically independent of $A$, 
\item[(c)] the density of $I_{A^\bot}(p(A))\in \BB^{d-k+1}$ is $f_{d-k+1, \frac{(k-1)(d+1)}{2} + k \beta}$,
\item[(d)] $I_{A^\bot}(p(A))$ is stochastically independent of $A^\bot$.
\end{itemize}
\end{theorem}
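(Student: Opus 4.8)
The plan is to pin down the joint law of $\big((Z_i)_{i=1}^k,\,A,\,A^\perp\big)$ by a single application of the affine Blaschke--Petkantschin formula, Proposition~\ref{theo:blaschke_petk}. For a bounded measurable test function I would start from
$$
\E\, g(X_1,\dots,X_k)=\int_{(\R^d)^k} g(x_1,\dots,x_k)\,\prod_{i=1}^k f_{d,\beta}(x_i)\,\lambda_d^k(\dd(x_1,\dots,x_k)),
$$
regarding $g$ as a function of all the objects built from $(x_1,\dots,x_k)$ — namely $A=\aff(x_1,\dots,x_k)$, $p(A)$, and the rescaled points $Z_i$. Applying Proposition~\ref{theo:blaschke_petk} with the integer $k$ rewrites the right-hand side as
$$
B(d,k)\int_{A(d,k-1)}\int_{E^k} g\,\Delta^{d-k+1}(x_1,\dots,x_k)\,\prod_{i=1}^k f_{d,\beta}(x_i)\,\lambda_E^k(\dd x)\,\mu_{k-1}(\dd E),
$$
and disintegrating $\mu_{k-1}$ as in~\eqref{eq:DefMeasureMuk} writes $E=L+p$ with $L\in G(d,k-1)$ sampled from $\nu_{k-1}$ and $p\in L^\perp$ sampled from $\lambda_{L^\perp}$; here $L$ is the linear direction space of $A$, $L^\perp=A^\perp$, $p=p(A)$, and $h:=\|p\|$. (Since the $X_i$ have a density, they are a.s.\ in general position, so $A$ is genuinely $(k-1)$-dimensional and $\Delta>0$.)

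Next I would change variables inside the affine fibre $E$. For $x_i\in E$ write $x_i=p+y_i$ with $y_i\in L$; orthogonality $p\perp y_i$ gives $1-\|x_i\|^2=(1-h^2)\bigl(1-\|y_i\|^2/(1-h^2)\bigr)$, and $\|x_i\|<1$ forces $h<1$. The substitution $y_i=\sqrt{1-h^2}\,z_i$ (so that $z_i\in L$ represents $Z_i$ in an orthonormal frame of $L$) produces a Jacobian $(1-h^2)^{k(k-1)/2}$ from the $k$ copies of $\lambda_E=\lambda_L$, a factor $(1-h^2)^{(k-1)(d-k+1)/2}$ from the weight $\Delta^{d-k+1}$ (since $\Delta$ is a $(k-1)$-dimensional volume), and the identity $\prod_i f_{d,\beta}(x_i)=c_{d,\beta}^k(1-h^2)^{k\beta}\prod_i(1-\|z_i\|^2)^\beta\ind_{\{\|z_i\|<1\}}$ supplies a further $(1-h^2)^{k\beta}$. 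Collecting the powers of $1-h^2$ yields the exponent
$$
\frac{k(k-1)}{2}+\frac{(k-1)(d-k+1)}{2}+k\beta=\frac{(k-1)(d+1)}{2}+k\beta,
$$
which is exactly the beta parameter appearing in~(c).

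After the substitution the measure $\prod_i f_{d,\beta}(x_i)\,\Delta^{d-k+1}(x)\,\lambda_E^k(\dd x)\,\mu_{k-1}(\dd E)$ has become a constant multiple of a product of three pieces: $\nu_{k-1}(\dd L)$; a factor proportional to $(1-\|p\|^2)^{\frac{(k-1)(d+1)}{2}+k\beta}\ind_{\{\|p\|<1\}}$ in the variable $p\in L^\perp$; and the weighted beta measure $\Delta^{d-k+1}(z_1,\dots,z_k)\prod_i(1-\|z_i\|^2)^\beta\ind_{\{\|z_i\|<1\}}$ in the variables $z_i\in L$. The key point is that, after $L$ is identified with $\R^{k-1}$ via an orthonormal frame, neither of the last two pieces depends on $L$; and because $\Delta$ and the beta density are isometry-invariant, the frame may be taken to be $I_A$ (and $I_{A^\perp}$ for $p$), so that $z_i$ becomes $Z_i$ and $p$ becomes $I_{A^\perp}(p(A))$ without changing anything. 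Reading off the three factors gives: $(Z_1,\dots,Z_k)$ has density proportional to $\Delta^{d-k+1}(z)\prod_i f_{k-1,\beta}(z_i)$, i.e.~(a); $(Z_1,\dots,Z_k)$ is independent of $A=(L,p)$, i.e.~(b); the conditional law of $I_{A^\perp}(p(A))$ given $L$ has density $f_{d-k+1,\frac{(k-1)(d+1)}{2}+k\beta}$, i.e.~(c); and since this conditional law does not depend on $L$, the vector $I_{A^\perp}(p(A))$ is independent of $A^\perp=L^\perp$, i.e.~(d).

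The main obstacle is the bookkeeping in the second step: correctly accounting for the Jacobian of the scaling $y_i\mapsto\sqrt{1-h^2}\,z_i$ simultaneously in the $k$ Lebesgue factors $\lambda_E$, in the $\Delta^{d-k+1}$ weight produced by Blaschke--Petkantschin, and in the product of $k$ beta densities, and verifying that all the resulting powers of $1-h^2$ combine into the single exponent in~(c). The accompanying check that the arbitrary choice of the measurable isometries $I_E$ is immaterial — because $\Delta$, the norms, and the beta density are all isometry-invariant — is routine but should be stated explicitly, since it is what makes (a) and (c) assertions about genuine densities.
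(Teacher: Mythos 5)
Your proposal is correct and follows essentially the same route as the paper: the affine Blaschke--Petkantschin formula, the rescaling $y_i=\sqrt{1-h^2}\,z_i$ inside the fibre, and the bookkeeping of the powers of $1-h^2$ yielding the exponent $\frac{(k-1)(d+1)}{2}+k\beta$ all match the paper's argument, and your Jacobian accounting is accurate. The only (cosmetic) difference is that you disintegrate $\mu_{k-1}$ via~\eqref{eq:DefMeasureMuk} at the outset and read off all four claims from one product formula, whereas the paper first derives (a)--(b) with $\mu_{k-1}$ intact and only then invokes the disintegration for (c)--(d).
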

\begin{remark}
For $k=1$, Part~(c) states that $p(A) = X_1$ has density $f_{d,\beta}$ (which is trivial), whereas the other assertions are empty statements.
\end{remark}
\begin{proof}[Proof of Theorem~\ref{theo:ruben_miles}.]
Let $\varphi:\R^k\to [0,\infty)$ and $\psi:A(d,k-1)\to[0,\infty)$ be Borel measurable functions.  We are interested in the following quantity:
\begin{align*}
B_{\varphi,\psi}
:&=
\E \left[\varphi\left(\frac{I_A(X_1)}{\sqrt{1-h^2(A)}},\ldots,\frac{I_A(X_{k})}{\sqrt{1-h^2(A)}}\right) \psi(A)\right]
\\
& =
\int_{(\R^d)^k} \varphi\left(\frac{I_A(x_1)}{\sqrt{1-h^2(A)}},\ldots,\frac{I_A(x_{k})}{\sqrt{1-h^2(A)}}\right) \psi(A)
\left(\prod_{i=1}^k f_{d,\beta}(x_i)\right) \left(\prod_{i=1}^k\lambda_d(\dd x_i)\right),
\end{align*}
where $A$ is used to denote $\aff (x_1,\ldots,x_k)$ without risk of confusion.
For the rest of the proof, let $C_1,C_2,\ldots$ be constants depending only on $d,k,\beta$. By the affine Blaschke--Petkantschin formula stated in Proposition~\ref{theo:blaschke_petk}, we have
\begin{multline*}
B_{\varphi,\psi}
=
C_1 \int_{A(d,k-1)}\int_{A^k} \varphi\left(\frac{I_A(x_1)}{\sqrt{1-h^2(A)}},\ldots,\frac{I_A(x_{k})}{\sqrt{1-h^2(A)}}\right) \psi(A)
\\
\times \Delta^{d-k+1}(x_1,\ldots,x_k) \left(\prod_{i=1}^k f_{d,\beta}(x_i)\right) \left(\prod_{i=1}^k\lambda_A(\dd x_i)\right) \mu_{k-1}(\dd A).
\end{multline*}
Using the substitution $y_i=I_A(x_i)\in\R^{k-1}$,  $1\leq i\leq k$, recalling that $I_A:A\to \R^{k-1}$ is an isometry such that $I_A(p(A)) =0$ and observing that  $\|x_i\|^2 = h^2(A) + \|y_i\|^2$, we arrive at
\begin{align*}
B_{\varphi,\psi}&=
C_2 \int_{A(d,k-1)} \int_{(\R^{k-1})^k} \varphi\left(\frac{y_1}{\sqrt{1-h^2(A)}},\ldots,\frac{y_{k}}{\sqrt{1-h^2(A)}}\right)  \psi(A)
\\
&  \times\Delta^{d-k+1}(y_1,\ldots,y_k)   \left( \prod_{i=1}^k (1-h^2(A)-\|y_i\|^2)^\beta\,\ind_{\{h^2(A)+\|y_i\|^2<1\}}\right)  \left(\prod_{i=1}^k \lambda_{k-1}(\dd y_i)\right) \mu_{k-1}(\dd A),
\end{align*}
where we also used the definition \eqref{eq:def_f_beta} of the beta density.
Next, we apply the substitution $z_i = y_i/\sqrt{1-h^2(A)}\in \BB^{k-1}$, $1\leq i\leq k$, and write
\begin{align}
&(1-h^2(A)-\|y_i\|^2)^\beta
=
(1-h^2(A))^\beta \left(1- \frac{\|y_i\|^2}{1-h^2(A)}\right)^\beta
=
(1-h^2(A))^\beta \left(1-\|z_i\|^2\right)^\beta, \label{eq:self_similar_beta}\\
\nonumber &\lambda_{k-1}(\dint y_i) = (1-h^2(A))^{\frac 12 (k-1)} \lambda_{k-1}(\dint z_i), \\
\nonumber &\Delta(y_1,\ldots,y_k) =  (1-h^2(A))^{\frac {k-1}2} \Delta(z_1,\ldots,z_k),
\end{align}
to conclude that
\begin{multline*}
B_{\varphi,\psi}
=
C_3 \int_{A(d,k-1)} \int_{(\BB^{k-1})^k} \varphi(z_1,\ldots,z_k) \psi(A) \; (1-h^2(A))^{\frac 12 k(k-1) + \frac 12(d-k+1)(k-1) + k \beta} \ind_{\{h(A)<1\}}
\\
\times\Delta^{d-k+1}(z_1,\ldots,z_k)
\left(\prod_{i=1}^k  (1 - \|z_i\|^2)^\beta \right) \left(\prod_{i=1}^k \lambda_{k-1}(\dd z_i) \right)\mu_{k-1}(\dd A).
\end{multline*}
Finally, some elementary transformations including the use of~\eqref{eq:def_f_beta} lead to
\begin{multline*}
B_{\varphi,\psi}
=
C_4 \left(\int_{A(d,k-1)}  \psi(A)\;  (1-h^2(A))^{\gamma} \ind_{\{h(A)<1\}}\; \mu_{k-1}(\dd A)\right)
\\
\times \left(\int_{(\R^{k-1})^k} \varphi(z_1,\ldots,z_k)\Delta^{d-k+1} (z_1,\ldots,z_k)  \left(\prod_{i=1}^k f_{k-1,\beta}(z_i)\right) \left(\prod_{i=1}^k \lambda_{k-1}(\dd z_i) \right)\right),
\end{multline*}
where we used the notation
$$
\gamma:= \frac 12 k(k-1) + \frac 12(d-k+1)(k-1) + k \beta
=\frac {(k-1)(d+1)}2 + k\beta.
$$
The form of the second integral and the product structure of the formula  imply that the random points $Z_1,\ldots,Z_k$ have the required joint density and are independent of $A$, thus proving claims (a) and (b) of the theorem.

Next, we prove parts (c) and (d) of the theorem. To this end, we take $\varphi(z_1,\ldots,z_k)=1$ and write the above result as
$$
\E \psi(A) = C_5 \int_{A(d,k-1)}  \psi(A)\;  (1-h^2(A))^{\gamma} \ind_{\{h(A)<1\}}\; \mu_{k-1}(\dd A).
$$
Now we take $\psi(A) = \psi_1(I_{A^\bot}(p(A)))\, \psi_2 (A^\bot)$ for some Borel functions $\psi_1:\R^{d-k+1} \to [0,\infty)$ and $\psi_2: G(d,d-k+1) \to [0,\infty)$, so that the above identity takes the form
$$
\E \psi(A) = C_5 \int_{A(d,k-1)}  \psi_1(I_{A^\bot}(p(A)))\; \psi_2 (A^\bot)\;  (1-h^2(A))^{\gamma} \ind_{\{h(A)<1\}}\; \mu_{k-1}(\dd A).
$$
The definition of the measure $\mu_{k-1}$ on $A(d,k-1)$ given in~\eqref{eq:DefMeasureMuk} states that for every Borel function $f: A(d,k-1)\to [0,\infty)$ we have
$$
\int_{A(d,k-1)} f(A) \mu_{k-1}(\dint A)
=
\int_{G(d,k-1)}\int_{L^\bot}f(L+x) \lambda_{L^\bot}(\dd x) \;\nu_{k-1}(\dd L).
$$
Observing that for every $L\in G(d,k-1)$ and $x\in L^\bot$ we have $(L+x)^\bot = L^\bot$, $p(L+x) = x$ and $h(L+x) = \|x\|$, we arrive at
$$
\E \psi(A)
=
C_5 \int_{G(d,k-1)} \psi_2 (L^\bot) \left(\int_{L^\bot} \psi_1(I_{L^\bot}(x)) \;  (1-\|x\|^2)^{\gamma} \ind_{\{\|x\|<1\}}\; \lambda_{L^\bot}(\dd x)\right) \;\nu_{k-1}(\dd L).
$$
Writing $y:= I_{L^\bot}(x)\in \R^{d-k+1}$ and using that $\|y\| = \|x\|$ since $I_{L^\bot}:L^\bot\to \R^{d-k+1}$ is an isometry, we obtain
\begin{multline*}
\E \left[\psi_1(I_{A^\bot}(p(A))) \; \psi_2 (A^\bot)\right]
\\=
C_5 \left(\int_{G(d,k-1)} \psi_2 (L^\bot)\; \nu_{k-1}(\dd L)  \right) \left(\int_{\R^{d-k+1}} \psi_1(y)   \; (1-\|y\|^2)^{\gamma} \ind_{\{\|y\|<1\}}\; \dd y \right).
\end{multline*}
The product structure of the right-hand side implies that $I_{A^\bot}(p(A))$ and $A^\bot$ are independent, thus proving part (d) of the theorem. Taking $\psi_2 \equiv 1$, we arrive at
$$
\E \psi_1(I_{A^\bot}(p(A)))
=
C_5  \int_{\R^{d-k+1}} \psi_1(y) \;  (1-\|y\|^2)^{\gamma}\ind_{\{\|y\|<1\}} \; \dd y
=
C_6 \int_{\R^{d-k+1}} \psi_1(y) \; f_{d-k+1, \gamma}(y)\; \dd y.
$$
It follows that $I_{A^\bot}(p(A))$ has density $f_{d-k+1, \gamma}$ on $\R^{d-k+1}$, thus proving claim (c).
\end{proof}

\begin{remark}
Theorem~\ref{theo:ruben_miles} continues to hold for $\beta=-1$ (corresponding to the uniform distribution on the $(d-1)$-dimensional unit sphere), but in this case we have to replace (a) by
\begin{itemize}
\item[(a)] The joint distribution of $(Z_1,\ldots,Z_k)$ has density proportional to $\Delta^{d-k+1} (z_1,\ldots,z_k)$ with respect to the $d$-th power of the spherical Lebesgue measure on $\bS^{d-1}$.
\end{itemize}
\end{remark}

A result similar to Theorem~\ref{theo:ruben_miles} holds in the beta' case as well and is also due to Ruben and Miles~\cite{ruben_miles}. Since the proof is similar, we don't present the details.

\begin{theorem}[Canonical decomposition in the beta' case]\label{theo:ruben_miles_prime}
Let $X_1,\ldots,X_k$ be i.i.d.\ points in $\R^d$ with density $\tilde f_{d,\beta}$,  where $k\in \{1,\ldots, d+1\}$. Let $A=\aff(X_1,\ldots,X_k)$ be the affine subspace spanned by $X_1,\ldots,X_k$. Let also $p(A)$ be the orthogonal projection of the origin on $A$ and let and $h(A) = \|p(A)\|$ denote the distance from the origin to $A$. Consider the points
$$
Z_i := \frac{I_A(X_i)}{\sqrt{1 + h^2(A)}} \in \R^{k-1}, \quad i=1,\ldots,k.
$$
Then,
\begin{itemize}
\item[(a)] the joint Lebesgue density of the random vector $(Z_1,\ldots,Z_k)$ is proportional to
$$\Delta^{d-k+1} (z_1,\ldots,z_k)\prod_{i=1}^k \tilde f_{k-1,\beta}(z_i),$$
\item[(b)] the random vector $(Z_1,\ldots,Z_k)$ is stochastically independent of $A$,
\item[(c)] the density of $I_{A^\bot}(p(A))\in \R^{d-k+1}$ is $\tilde f_{d-k+1, k \beta - \frac{(k-1)(d+1)}{2}}$,
\item[(d)] $I_{A^\bot}(p(A))$ is stochastically independent of $A^\bot$.
\end{itemize}
\end{theorem}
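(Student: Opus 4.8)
The plan is to run the argument for Theorem~\ref{theo:ruben_miles} essentially verbatim, replacing every occurrence of $1-h^2(A)$ by $1+h^2(A)$ and dropping the indicator $\ind_{\{h(A)<1\}}$, since the beta$'$ density is supported on all of $\R^d$ and $A$ now ranges over the whole of $A(d,k-1)$. Concretely, for Borel functions $\varphi:\R^k\to[0,\infty)$ and $\psi:A(d,k-1)\to[0,\infty)$ I would write
$$
B_{\varphi,\psi} := \E\left[\varphi\!\left(\tfrac{I_A(X_1)}{\sqrt{1+h^2(A)}},\ldots,\tfrac{I_A(X_k)}{\sqrt{1+h^2(A)}}\right)\psi(A)\right]
$$
as an integral over $(\R^d)^k$ against $\prod_{i=1}^k \tilde f_{d,\beta}(x_i)$, apply the affine Blaschke--Petkantschin formula (Proposition~\ref{theo:blaschke_petk}) to split it into an integral over $A\in A(d,k-1)$ followed by an integral over $(x_1,\ldots,x_k)\in A^k$ carrying the factor $\Delta^{d-k+1}(x_1,\ldots,x_k)$, then substitute $y_i=I_A(x_i)\in\R^{k-1}$ (using $\|x_i\|^2=h^2(A)+\|y_i\|^2$) and finally $z_i=y_i/\sqrt{1+h^2(A)}\in\R^{k-1}$.

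The only computation that genuinely has to be redone is the self-similarity identity in the last substitution: for $z_i=y_i/\sqrt{1+h^2(A)}$ one has $(1+h^2(A)+\|y_i\|^2)^{-\beta}=(1+h^2(A))^{-\beta}(1+\|z_i\|^2)^{-\beta}$, together with $\lambda_{k-1}(\dd y_i)=(1+h^2(A))^{(k-1)/2}\lambda_{k-1}(\dd z_i)$ and $\Delta(y_1,\ldots,y_k)=(1+h^2(A))^{(k-1)/2}\Delta(z_1,\ldots,z_k)$. Collecting the powers of $1+h^2(A)$ coming from the $k$ density factors, the $k$ Lebesgue factors and the term $\Delta^{d-k+1}$ produces the exponent $\tfrac{(k-1)(d+1)}{2}-k\beta$, which I write as $-\gamma$ with $\gamma:=k\beta-\tfrac{(k-1)(d+1)}{2}$, so that $B_{\varphi,\psi}$ splits as a constant times $\int_{A(d,k-1)}\psi(A)(1+h^2(A))^{-\gamma}\,\mu_{k-1}(\dd A)$ times $\int_{(\R^{k-1})^k}\varphi(z_1,\ldots,z_k)\Delta^{d-k+1}(z_1,\ldots,z_k)\prod_i\tilde f_{k-1,\beta}(z_i)\prod_i\lambda_{k-1}(\dd z_i)$. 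The product structure yields (a) and (b) at once. For (c) and (d) I would set $\varphi\equiv 1$, take $\psi(A)=\psi_1(I_{A^\bot}(p(A)))\,\psi_2(A^\bot)$, and unfold $\mu_{k-1}$ via~\eqref{eq:DefMeasureMuk} using $(L+x)^\bot=L^\bot$, $p(L+x)=x$, $h(L+x)=\|x\|$ for $L\in G(d,k-1)$, $x\in L^\bot$; the resulting product in $\psi_1,\psi_2$ proves (d), and taking $\psi_2\equiv 1$ identifies the density of $I_{A^\bot}(p(A))$ on $\R^{d-k+1}$ with a constant multiple of $(1+\|y\|^2)^{-\gamma}$, i.e.\ with $\tilde f_{d-k+1,\gamma}$, which is (c).

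The one point requiring care, absent in the compactly supported beta case, is integrability: one must check that the two factors above are finite. This is exactly where the standing hypothesis $\beta>d/2$ enters. On the one hand $\gamma=k\beta-\tfrac{(k-1)(d+1)}{2}>\tfrac{d-k+1}{2}$ if and only if $\beta>d/2$, which is precisely the condition for $\tilde f_{d-k+1,\gamma}$ to be a genuine probability density; on the other hand, since $\Delta^{d-k+1}(z_1,\ldots,z_k)$ grows at most polynomially of degree $d-k+1$ in each $z_i$, the marginal density of $(Z_1,\ldots,Z_k)$ is integrable because $\int_{\R^{k-1}}(1+\|z\|^2)^{-\beta}\|z\|^{d-k+1}\,\dd z<\infty$, again exactly when $\beta>d/2$. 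I expect this bookkeeping to be the only real obstacle; the remaining algebra is mechanical and mirrors the proof of Theorem~\ref{theo:ruben_miles} step by step, which is why the details can be safely omitted.
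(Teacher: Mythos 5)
Your proposal is correct and follows essentially the same route as the paper, which proves Theorem~\ref{theo:ruben_miles_prime} by repeating the beta argument with the modified self-similarity identity $(1+h^2(A)+\|y_i\|^2)^{-\beta}=(1+h^2(A))^{-\beta}(1+\|z_i\|^2)^{-\beta}$ and the resulting exponent $\tilde\gamma=k\beta-\tfrac{(k-1)(d+1)}{2}$, exactly as you do. Your extra integrability bookkeeping (both conditions reducing to $\beta>d/2$) is a sound addition, though it also follows automatically from taking $\varphi\equiv\psi\equiv 1$, which shows the product of the two positive factors equals $1$ and hence each is finite.
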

\begin{proof}
The computations are analogous to those done in the proof of Theorem~\ref{theo:ruben_miles}, but instead of~\eqref{eq:self_similar_beta} we use the identity
\begin{equation*}
(1+h^2(A)+\|y_i\|^2)^{-\beta}
=
(1+h^2(A))^{-\beta} \left(1 + \frac{\|y_i\|^2}{1+h^2(A)}\right)^{-\beta}
=
(1+h^2(A))^{-\beta} \left(1 + \|z_i\|^2\right)^{-\beta}.
\end{equation*}
Correspondingly, in the formula for $B_{\varphi,\psi}$ the term $(1+h^2(A))^{-\tilde \gamma}$ appears, where $\tilde \gamma$ is given by
$\tilde \gamma = k\beta - \frac{(k-1)(d+1)}{2}$.
\end{proof}

Applying Lemma~\ref{lem:squared_norm} to $I_{A^\bot}(p(A))$, we obtain the following result, which is also contained in~\cite{beta_simplices} as Theorem 2.7.

\begin{corollary}[Distances to affine subspaces]\label{cor:distance_distr}
Let $X_1,\ldots,X_{k}$ be i.i.d.\ random points in $\R^d$,  where $k\in \{1,\ldots,d\}$, and denote by $h$ the distance from the origin to the affine subspace $\aff(X_1,\ldots,X_k)$ spanned by $X_1,\ldots,X_{k}$.
\begin{itemize}
\item[(a)] If $X_1,\ldots,X_k$ have the beta density $f_{d,\beta}$, then  $h^2(A)\sim \text{Beta}(\frac{d-k+1}{2},\frac{(k-1)(d+1)}{2} + k \beta + 1)$.
\item[(b)] If $X_1,\ldots,X_k$ have the beta' density $\tilde f_{d,\beta}$, then  $h^2(A)\sim \text{Beta}'(\frac{d-k+1}{2}, k(\beta - \frac d2))$.
\end{itemize}
\end{corollary}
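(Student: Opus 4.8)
The plan is to read the distribution of $h^2(A)$ directly off the canonical decompositions already established, using the single elementary observation that $I_{A^\bot}$ is an isometry with $I_{A^\bot}(p(A)) = I_{A^\bot}(\pi_{A^\bot}(0))$ and $I_{A^\bot}(0)=0$, so that
$$
h^2(A) = \|p(A)\|^2 = \|I_{A^\bot}(p(A))\|^2 .
$$
Thus $h^2(A)$ is nothing but the squared Euclidean norm of the random vector $I_{A^\bot}(p(A))\in\R^{d-k+1}$, whose law is described by part (c) of Theorem~\ref{theo:ruben_miles} (beta case) and part (c) of Theorem~\ref{theo:ruben_miles_prime} (beta$'$ case). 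It then remains to apply Lemma~\ref{lem:squared_norm} in the ambient dimension $d-k+1$.

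For part (a), Theorem~\ref{theo:ruben_miles}(c) tells us that $I_{A^\bot}(p(A))$ has density $f_{d-k+1,\gamma}$ with $\gamma = \tfrac{(k-1)(d+1)}{2} + k\beta$. Applying Lemma~\ref{lem:squared_norm}(a) with the dimension parameter $d-k+1$ and the beta parameter $\gamma$ yields
$$
h^2(A) \sim \Beta\!\left(\tfrac{d-k+1}{2},\, \gamma+1\right) = \Beta\!\left(\tfrac{d-k+1}{2},\, \tfrac{(k-1)(d+1)}{2} + k\beta + 1\right),
$$
which is exactly the claimed distribution.

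For part (b), Theorem~\ref{theo:ruben_miles_prime}(c) gives that $I_{A^\bot}(p(A))$ has density $\tilde f_{d-k+1,\tilde\gamma}$ with $\tilde\gamma = k\beta - \tfrac{(k-1)(d+1)}{2}$. Applying Lemma~\ref{lem:squared_norm}(b) with dimension parameter $d-k+1$ and beta$'$ parameter $\tilde\gamma$ yields
$$
h^2(A) \sim \Beta'\!\left(\tfrac{d-k+1}{2},\, \tilde\gamma - \tfrac{d-k+1}{2}\right).
$$
The only point requiring a line of computation — and the only place any care is needed — is checking the second parameter: since $\tfrac{(k-1)(d+1)}{2} + \tfrac{d-k+1}{2} = \tfrac{(k-1)(d+1) + (d-k+1)}{2} = \tfrac{kd}{2}$, we get $\tilde\gamma - \tfrac{d-k+1}{2} = k\beta - \tfrac{kd}{2} = k\!\left(\beta - \tfrac d2\right)$, which matches the claim. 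I do not anticipate any genuine obstacle here: the statement is a packaging of three previously recorded facts, and the arithmetic identity above is the full extent of the work.
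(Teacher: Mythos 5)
Your proposal is correct and is essentially the paper's own proof: the paper likewise obtains the corollary by applying Lemma~\ref{lem:squared_norm} (in dimension $d-k+1$) to the vector $I_{A^\bot}(p(A))$, whose density is given in part (c) of Theorems~\ref{theo:ruben_miles} and~\ref{theo:ruben_miles_prime}, and your parameter check $\tilde\gamma-\frac{d-k+1}{2}=k\bigl(\beta-\frac d2\bigr)$ is the same (correct) arithmetic.
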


\begin{remark}
A result similar to Theorems~\ref{theo:ruben_miles} and~\ref{theo:ruben_miles_prime} holds for the isotropic normal distribution in $\R^d$ if we define $Z_i=I_A(X_i)$. In this case, $I_{A^\bot}(p(A))$ has a standard normal distribution on $\R^{d-k+1}$.
\end{remark}

\subsection{Relation to the extreme-value theory}\label{subsec:extreme_pareto}
The aim of the present section is to show that the beta and the beta' distributions, together with the normal distribution considered as their limiting case, are in one-to-one correspondence with the generalized Pareto distributions. The latter are, in turn,  in one-to-one correspondence with the extreme-value distributions. According to the classical  Fisher--Tippett--Gnedenko theorem~\cite[Theorem~1.1.3]{de_haan_book}, there are $3$ possible families of extreme-value distributions: Gumbel, Fr\'echet and Weibull,  which, as we shall argue, correspond to the normal distribution, the beta' family and the beta family, respectively.
Since the results of this section will not be used in the sequel, we shall only sketch the arguments.

Consider a random vector $X$ in $\R^d$ whose density is a spherically symmetric function of the form $p(\|x\|)$, $x\in\R^d$.
The beta and beta' distributions as well as the normal distribution are characterized by the following remarkable property discovered by Miles~\cite[Section~12]{miles}. Namely, for every $h,r>0$ for which $p(h)>0$ the relation
\begin{equation}\label{eq:miles_functional_eq}
p\left(\sqrt{h^2+r^2}\right) = c_1(h) p\left(\frac{r}{c_2(h)}\right)
\end{equation}
holds, where $c_1(h)>0$ and $c_2(h)>0$ are certain functions.
That is, the restriction of the density to any affine hyperplane at distance $h$ from the origin has the same radial component as the original density, up to rescaling. This property is crucial for the proof of the canonical decomposition, recall~\eqref{eq:self_similar_beta}.

Miles~\cite[Section~12]{miles} solved the functional equation~\eqref{eq:miles_functional_eq} under additional smoothness assumptions on $p$.
Let us show how~\eqref{eq:miles_functional_eq} can be reduced to the classification of the generalized Pareto distributions which does not require smoothness. In fact, it is even possible to drop the assumption of the absolute continuity of $X$, but we refrain from doing this since this would lead to intransparent notation.
Consider the function $g(y) := p(\sqrt y)$. Then, \eqref{eq:miles_functional_eq}  takes the form
$$
g(h^2+r^2) = c_1(h) g\left(\frac{r^2}{c_2^2(h)}\right).
$$
Equivalently, with $a:=h^2$, $s:=r^2$ and with $\psi_1(a)= c_1(\sqrt a)$, $\psi_2(a)=c_2^2(\sqrt a)$, we have
$$
g(a+s) = c_1(\sqrt a) g\left(\frac{s}{c_2^2(\sqrt a)}\right) = \psi_1(a) g\left(\frac{s}{\psi_2(a)}\right).
$$
Since $\int_0^\infty g(y) \dd y = 2\int_0^\infty p(r) r \dd r <\infty$, provided we assume that $d\geq 2$, we can normalize $g$ to be a probability density. Let $Z$ be a random variable with density $g$. Then, the above equation can probabilistically be rewritten as
\begin{equation}\label{eq:pareto_definition}
Z-a \,|\, Z\geq a \eqdistr  \psi_2(a) Z \qquad\text{ for all } a>0 \text{ such that } \P[Z\geq a] >0.
\end{equation}
Non-degenerate distributions having this property are known as generalized Pareto distributions and appear in extreme-value theory as limit distributions for residual life given that the current age is high. These distributions were classified in~\cite[Theorem~2]{balkema_de_haan} and are in one-to-one correspondence with the extreme-value distributions; see Theorem 1.1.6 (in particular, Claim~4) in~\cite{de_haan_book}. There are three possible types of generalized Pareto distributions:
\begin{enumerate}
\item[(a)] the exponential distribution $g(y) = \text{const}\cdot \eee^{-\lambda y}$, $y>0$, with parameter $\lambda>0$, which corresponds to the normal distribution with radial component $p(r)=\text{const}\cdot\eee^{-\lambda r^2}$, $r>0$. Here and below $\textit{const}$ denotes a suitable normalization constant, which may change from occasion to occasion.
\item[(b)] the Pareto distribution of Weibull type $g(y) = \text{const}\cdot(1 - y/A)^\beta$, $0 < y < A$, where $\beta>-1$ and $A>0$ are parameters. They correspond to the beta-type densities with radial component $p(r)= \text{const}\cdot (1 - r^2/A)^\beta$, $0<r<\sqrt{A}$.
\item[(c)] the Pareto distribution of Fr\'echet type $g(y) = \text{const}\cdot (1 + y/A)^{-\beta}$, $y > 0$, where $\beta>1$ and $A>0$ are parameters. They correspond to the beta'-type densities with radial component $p(r)= \text{const}\cdot (1 + r^2/A)^{-\beta}$, $r>0$.
\end{enumerate}
Besides, the degenerate distribution, where $Z$ is a positive constant, also satisfies~\eqref{eq:pareto_definition}. The corresponding multivariate distribution is the uniform distribution on a sphere.

\section{Proofs}\label{sec:proofs}

\subsection{Expected external angles} \label{subsec:external_angles_proofs}
\begin{proof}[Proof of Theorem~\ref{theo:external} and Theorem~\ref{theo:external_prime}]
Since the proofs in the beta and beta' cases are similar, let us write $P$ for both $P_{n,d}^\beta$ and $\tilde P_{n,d}^\beta$. The following first part of the proof applies to both cases.

\begin{figure}[ht]
\begin{center}
\includegraphics[width=0.8\textwidth, height=0.6\textwidth ]{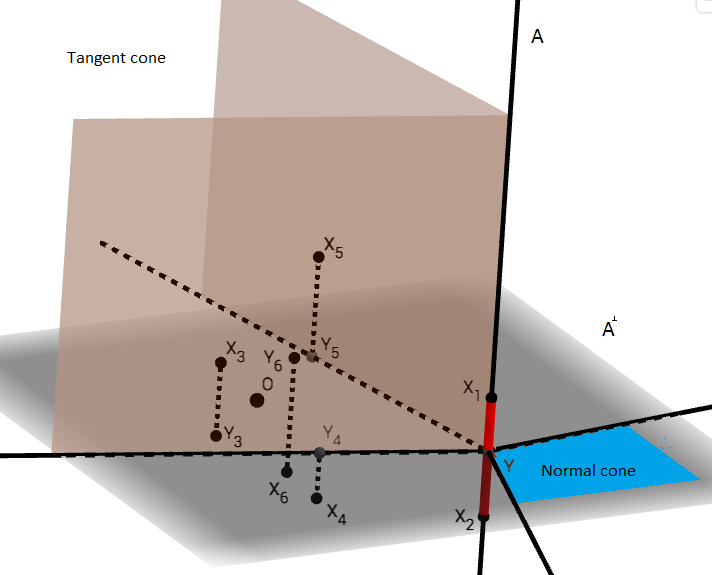}
\end{center}
\caption{
Idea of the proof of Theorems~\ref{theo:external} and~\ref{theo:external_prime}.
The red interval is the face $G=[X_1,X_2]$, with $k=2$.  The vertical line passing through $X_1$ and $X_2$ is the affine subspace $A$. The grey horizontal plane is its orthogonal complement $A^\bot$. The points $Y_3,\ldots, Y_6$ are orthogonal projections of $X_3,\ldots,X_6$ on $A^\bot$.  The figure also shows the tangent cone (the solid angle bounded by the brown half-planes) and the normal cone (the blue two-dimensional angle in $A^\bot$). The reader should keep in mind that both angles should in fact be translated to $0$.
}
\label{fig:projections}
\end{figure}

\vspace*{2mm}
\noindent
\textit{Representation of cones and angles.}
Consider the affine subspace $A = \aff G = \aff (X_1,\ldots,X_k)$ and let $A^\bot$ be the orthogonal complement of $A$; see Figure~\ref{fig:projections}.  Note that $\dim A = k-1$ and $\dim A^\bot = d-k+1$ with probability $1$. Observe also that $A^\bot$ is by definition a linear subspace, whereas $A$ need not pass through the origin.  In the following, we shall identify $A^\bot$ with $\R^{d-k+1}$ by means of the isometry $I_{A^\bot} : A^{\bot} \to \R^{d-k+1}$, as explained in Section~\ref{subsec:identification}.   Let $\pi_{A^\bot}:\R^d \to A^\bot$ be the orthogonal projection onto $A^\bot$.
Consider the points
$$
Y_1:= \pi_{A^\bot}(X_{k+1}) \in A^\bot,
\;\; \ldots, \;\;
Y_{n-k} := \pi_{A^\bot}(X_n)\in A^\bot,
\;\;
Y := \pi_{A^\bot}(X_1)= \ldots = \pi_{A^\bot}(X_k)\in A^\bot.
$$
Let us assume that $G=[X_1,\ldots,X_k]$ is a face of $P$.  Then the tangent cone of  $P$ at $G$ is given by
\begin{align*}
T(G, P)
=
\pos\left(X_{1} - \bar X, \ldots, X_k - \bar X, X_{k+1}- \bar X, \ldots, X_{n} - \bar X\right),
\end{align*}
where the centre $\bar X = (X_1+\ldots+X_k)/k$ is almost surely contained in the relative interior of $G$.
Since the positive hull of $X_1- \bar X,\ldots, X_k - \bar X$ is $A-\bar X$, we arrive at
$$
T(G, P)
=
(A-\bar X) \oplus \pos(Y_1-Y,\ldots,Y_{n-k}-Y),
$$
where the direct sum $\oplus$ is orthogonal.
Since $A-\bar X$ is a linear space, it follows that the normal cone at $G$, defined as the polar of the tangent cone, is the polar cone of $\pos(Y_1-Y,\ldots,Y_{n-k}-Y)$ taken inside $A^\bot$ as the ambient space. Let us now map all our points to $\R^{d-k+1}$ by considering $Y_i':=I_{A^\bot}(Y_i)\in \R^{d-k+1}$ and $Y' := I_{A^\bot}(Y)\in \R^{d-k+1}$. From the isometry property of $I_{A^\bot}$ it follows that the internal and the external angles at $G$ are given by
\begin{align}
\beta(G, P)  &= \alpha (\pos(Y_1'-Y',\ldots,Y_{n-k}'-Y')), \label{eq:gamma_G_P_alpha_internal}\\
\gamma(G, P) &= \alpha (\pos^\circ (Y_1'-Y',\ldots,Y_{n-k}'-Y')). \label{eq:gamma_G_P_alpha}
\end{align}
The above holds if $G$ is a face of $P$. At this point let us observe that  $G$ is \textit{not} a face of $P$ if and only if $\pos(Y_1-Y,\ldots,Y_{n-k}-Y)= A^\bot$. This condition means that the angles on the right-hand sides of~\eqref{eq:gamma_G_P_alpha_internal} and~\eqref{eq:gamma_G_P_alpha} are equal to $1$ and $0$, respectively, which corresponds to our convention that $\beta(G,P)=1$ and $\gamma(G,P)=0$ if $G$ is not a face of $P$.

If $N$ denotes a vector with standard normal distribution on $\R^{d-k+1}$ that is independent of everything else, then the definitions of the solid angle and the polar cone imply that
\begin{equation}\label{eq:E_gamma_proof0}
\gamma(G, P)
=
\P[\langle Y_1'-Y', N\rangle \leq 0 ,\ldots, \langle Y_{n-k}'-Y', N\rangle \leq 0 \;|\; Y',Y_1',\ldots,Y_{n-k}'].
\end{equation}
Averaging over $X_1,\ldots,X_n$, we arrive at
\begin{equation}\label{eq:E_gamma_proof}
\E \gamma(G, P)
=
\P[\langle Y_1'-Y', N\rangle \leq 0 ,\ldots, \langle Y_{n-k}'-Y', N\rangle \leq 0].
\end{equation}
The above considerations are valid both for beta and beta' polytopes. In the following, we consider the beta case. Changes needed in the beta' case will be indicated at the end of the proof.

\vspace*{2mm}
\noindent
\textit{Proof of the independence.}
Observe that by~\eqref{eq:E_gamma_proof0}, the random variable $\gamma(G, P)$ is certain function of the random points $Y',Y_1',\ldots,Y_{n-k}'$. Let us argue that this collection is independent of $I_A(X_1)/\sqrt{1-h^2(A)},\ldots, I_A(X_k)/\sqrt{1-h^2(A)}$, where $h(A) = \|Y\|$ is the distance from the origin to $A$, and $I_A: A\to \R^{k-1}$ is an isometry satisfying $I_A(Y)=0$.   This would prove the independence statement of Theorem~\ref{theo:external}. Recall that $Y_i'=I_{A^\bot}(\pi_{A^\bot}(X_{k+i}))$, $1\leq i \leq n-k$, hence $Y_1',\ldots,Y_{n-k}'$ are functions  of $X_{k+1}, \ldots,X_n$ and $A^\bot$. Since $I_A(X_1)/\sqrt{1-h^2(A)},\ldots, I_A(X_k)/\sqrt{1-h^2(A)}$ are stochastically independent of $A$ by part (b) of Theorem~\ref{theo:ruben_miles}, these random points are independent of $Y_1',\ldots,Y_{n-k}'$.  They are also independent of $Y'=I_{A^\bot}(Y)$ because $\{Y\} = A\cap A^\bot$ is function of $A$ only.

\vspace*{2mm}
\noindent
\textit{Joint distribution of the projected points.}
Let us now describe the joint distribution of the points $Y',Y_1',\ldots,Y_{n-k}'$.
We claim that
\begin{itemize}
\item[(a)]  $Y', Y_1',\ldots,Y_{n-k}'$ are independent points in $\R^{d-k+1}$,
\item[(b)]  $Y_1',\ldots,Y_{n-k}'$ are i.i.d.\ with density $f_{d-k+1, \frac{2\beta+k-1} 2}$,
\item[(c)]  $Y'$ has density $f_{d-k+1,\gamma}$ with $\gamma = \frac {(2\beta+d)k+ k-d-1}{2}$.
\end{itemize}
To prove (a), observe that conditionally on $A^\bot$, the points $Y_i = I_{A^\bot} (\pi_{A^\bot} (X_{k+i}))$, $1\leq i\leq n-k$, form an i.i.d.\ sample with density $f_{d-k+1, \frac{2\beta+k-1} 2}$ by Lemma~\ref{lem:projection} (a).  Again conditionally on $A^{\bot}$, the point $Y' = I_{A^\bot}(p(A))$ (where $Y=p(A)$ is the projection of the origin onto $A$) has the density $f_{d-k+1, \gamma}$ by Theorem~\ref{theo:ruben_miles} (c) and (d). Still conditioning on $A^\bot$, we observe that  $Y'= I_{A^\bot}(\pi_{A^\bot}(X_1))$ is stochastically independent of the points $Y_i = I_{A^\bot} (\pi_{A^\bot} (X_{k+i}))$, $1\leq i\leq n-k$. Thus, properties  (a), (b), (c) hold conditionally on $A^\bot$. Since the joint conditional distribution of $Y',Y_1',\ldots,Y_{n-k}'$ does not depend on $A^\bot$, the statements hold in the unconditional sense, too.

\vspace*{2mm}
\noindent
\textit{Proof of the formula for the external angle.}
We are finally ready to compute the expected external angle. Since the joint distribution of $Y',Y_1',\ldots,Y_{n-k}'$ does not change under orthogonal transformations of $\R^{d-k+1}$, we may rewrite~\eqref{eq:E_gamma_proof} in the following form:
$$
\E \gamma(G, P)
=
\P[\langle Y_1'-Y', e\rangle \leq 0 ,\ldots, \langle Y_{n-k}'-Y', e\rangle \leq 0],
$$
where $e\in \R^{d-k+1}$ is any unit vector. Introducing the random variables $Z_i:= \langle Y_i',e\rangle$ and $Z := \langle Y,e\rangle$, we obtain
\begin{equation}\label{eq:gamma_Z}
\E \gamma(G, P)
=
\P[Z_1\leq Z ,\ldots, Z_{n-k}\leq Z].
\end{equation}
Projecting $Y'$ and $Y_i'$ to $Z'$ and $Z_i'$ reduces the dimension by $d-k$.  Now, by the above description of the joint law of $Y',Y_1',\ldots,Y_{n-k}'$ and by  Lemma~\ref{lem:projection} (a), we have that
\begin{itemize}
\item[(a)]  $Z, Z_1,\ldots,Z_{n-k}$ are independent random variables,
\item[(b)]  $Z_1,\ldots,Z_{n-k}$ are i.i.d.\ with density $f_{1, \frac{2\beta+d-1} 2}$,
\item[(c)]  $Z$ has density $f_{1,\frac {(2\beta+d) k -1}{2}}$.
\end{itemize}
Conditioning on the event that $Z=t$ in the right-hand side of~\eqref{eq:gamma_Z} and integrating, we obtain
\begin{multline*}
\E \gamma(G, P)
=
\P[Z_1\leq Z ,\ldots, Z_{n-k}\leq Z]\\
=
\int_{-1}^{+1} c_{1, \frac {(2\beta+d) k - 1}{2}}
(1-t^2)^{\frac {(2\beta+d) k - 1}{2}}
\left(\int_{-1}^t c_{1, \frac{2\beta+d - 1}{2}} (1-s^2)^{\frac{2\beta + d  - 1}{2}}\dd s\right)^{n-k} \dd t
=
I_{n,k}(2\beta+d),
\end{multline*}
where we used~\eqref{eq:I_definition} in the last equality.  This completes the proof of the formula for the expected external angle in the beta case.

\vspace*{2mm}
\noindent
\textit{The beta' case} is analogous to the beta case, but this time everything is based on Theorem~\ref{theo:ruben_miles_prime} and part (b) of Lemma~\ref{lem:projection}. The joint distribution of $Y',Y_1',\ldots,Y_{n-k}'$ is as follows:
\begin{itemize}
\item[(a)]  $Y', Y_1',\ldots,Y_{n-k}'$ are independent points in $\R^{d-k+1}$,
\item[(b)]  $Y_1',\ldots,Y_{n-k}'$ are i.i.d.\ with density $\tilde f_{d-k+1, \frac{2\beta-k+1} 2}$,
\item[(c)]  $Y'$ has density $\tilde f_{d-k+1,\gamma}$ with $\gamma = \frac {(2\beta-d) k + d-k+1}{2}$.
\end{itemize}
By Lemma~\ref{lem:projection} (b), the joint distribution of the one-dimensional projections $Z_i:= \langle Y_i',e\rangle$ and $Z := \langle Y',e\rangle$ is as follows:
\begin{itemize}
\item[(a)]  $Z, Z_1,\ldots,Z_{n-k}$ are independent random variables,
\item[(b)]  $Z_1,\ldots,Z_{n-k}$ are i.i.d.\ with density $\tilde f_{1, \frac{2\beta-d+1} 2}$,
\item[(c)]  $Z$ has density $\tilde f_{1,\frac {(2\beta-d) k +1}{2}}$.
\end{itemize}
Recalling~\eqref{eq:gamma_Z}, conditioning on the event that $Z=t$ and integrating, we obtain
\begin{multline*}
\E \gamma(G, P)
=
\P[Z_1\leq Z ,\ldots, Z_{n-k}\leq Z]\\
=
\int_{-\infty}^{+\infty} \tilde c_{1, \frac {(2\beta-d) k + 1}{2}}
(1+t^2)^{-\frac {(2\beta-d) k + 1}{2}}
\left(\int_{-\infty}^t \tilde c_{1, \frac{2\beta-d + 1}{2}} (1+s^2)^{-\frac{2\beta-d + 1}{2}}\dd s\right)^{n-k} \dd t
=
\tilde I_{n,k}(2\beta-d),
\end{multline*}
where we used~\eqref{eq:I_definition_prime} in the last equality. This completes the proof in the beta' case.
\end{proof}

\subsection{Internal angles under change of dimension}

To motivate the next theorem, consider a $d$-dimensional simplex $[Z_1,\ldots,Z_{d+1}]$ in a Euclidean space $\R^{d+\ell}$, where $\ell\in\N_0$. Let first $Z_1,\ldots,Z_{d+1}$ be i.i.d.\ with the beta distribution $f_{d+\ell,\beta}$.  Na\"ively, one might conjecture that the expected internal angle at a face of some fixed dimension $k$ does not depend on the choice of $\ell\in\N_0$. Indeed, this angle does not depend on whether we consider the simplex as embedded into $\R^{d+\ell}$ or into its own $d$-dimensional affine hull $A=\aff (Z_1,\ldots,Z_{d+1})$, and the beta density preserves its form when restricted to affine subspaces (up to scaling, which does not change the angle).  However, as we know from Theorem~\ref{theo:ruben_miles}, the joint distribution of $Z_1,\ldots,Z_{d+1}$ inside their own affine hull involves an additional ``Blaschke--Petkantschin term'' $\Delta^\ell(Z_1,\ldots,Z_{d+1})$, which is why the above argument breaks down.
In the next theorem we show that in order to make the expected internal angle independent of the dimension of the space the simplex is embedded in, we have to decrease the parameter of the beta distribution by $\frac 12$ each time we increase the dimension by $1$.

\begin{theorem}\label{theo:internal_angle_projection}
Let $X_1,\ldots,X_{d+1}$ be i.i.d.\ random points in $\R^{d+\ell}$  with the beta  distribution $f_{d+\ell, \beta - \frac \ell 2}$, where $\ell\in\N_0$ and $\beta -\frac \ell 2\geq -1$. Then, for all $k\in \{1,\ldots,d\}$, we have
\begin{equation*}
\E \beta([X_1,\ldots,X_{k}], [X_1,\ldots,X_{d+1}]) = J_{d+1,k}(\beta)
\end{equation*}
where $J_{d+1,k}(\beta)$ is given by~\eqref{eq:J_definition}.
That is, the expected internal angle does not depend on $\ell\in \N_0$ as long as $\beta-\frac \ell 2\geq -1$.
Similarly, if $X_1,\ldots,X_{d+1}$ are i.i.d.\ points in $\R^{d+\ell}$ with the beta'-type density $\tilde f_{d+\ell, \beta + \frac \ell 2}$, where $\ell\in\N_0$ and $\beta > \frac d2$,  then the above expected internal angle equals $\tilde J_{d+1,k}(\beta)$ defined in~\eqref{eq:J_definition_prime} and thus does not depend on the choice of $\ell\in\N_0$.
\end{theorem}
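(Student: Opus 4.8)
The plan is to derive the statement, in both the beta and the beta$'$ case, from the canonical decompositions of Theorems~\ref{theo:ruben_miles} and~\ref{theo:ruben_miles_prime}, using that the internal angle $\beta([X_1,\ldots,X_k],[X_1,\ldots,X_{d+1}])$ of a simplex at a face is invariant under isometries of the ambient space and under positive rescaling. First I would apply Theorem~\ref{theo:ruben_miles} with ``$d$''$\,=d+\ell$ and ``$k$''$\,=d+1$ to $X_1,\ldots,X_{d+1}$, which a.s.\ span a $d$-dimensional affine subspace $A\subset\R^{d+\ell}$: replacing each $X_i$ by $Z_i:=I_A(X_i)/\sqrt{1-h^2(A)}\in\BB^{d}$ leaves the internal angle unchanged, while the joint density of $(Z_1,\ldots,Z_{d+1})$ is proportional to $\Delta^{(d+\ell)-(d+1)+1}(\mathbf z)\prod_{i=1}^{d+1}f_{d,\beta-\ell/2}(z_i)=\Delta^{\ell}(\mathbf z)\prod_{i=1}^{d+1}f_{d,\beta-\ell/2}(z_i)$ on $(\BB^{d})^{d+1}$. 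Writing $g(\mathbf z):=\beta([z_1,\ldots,z_k],[z_1,\ldots,z_{d+1}])$ for the internal-angle functional, it follows that
$$
\E\,\beta([X_1,\ldots,X_k],[X_1,\ldots,X_{d+1}])
=\frac{\displaystyle\int_{(\BB^{d})^{d+1}}g(\mathbf z)\,\Delta^{\ell}(\mathbf z)\prod_{i=1}^{d+1}(1-\|z_i\|^2)^{\beta-\ell/2}\,\dd\mathbf z}{\displaystyle\int_{(\BB^{d})^{d+1}}\Delta^{\ell}(\mathbf z)\prod_{i=1}^{d+1}(1-\|z_i\|^2)^{\beta-\ell/2}\,\dd\mathbf z}=:A_\ell,
$$
and for $\ell=0$ the right-hand side is precisely $J_{d+1,k}(\beta)$ by~\eqref{eq:J_definition}. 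The beta$'$ case is identical: one uses Theorem~\ref{theo:ruben_miles_prime}, sets $Z_i=I_A(X_i)/\sqrt{1+h^2(A)}\in\R^{d}$, and replaces $(1-\|z_i\|^2)^{\beta-\ell/2}$ by $(1+\|z_i\|^2)^{-(\beta+\ell/2)}$; the $\ell=0$ value is $\tilde J_{d+1,k}(\beta)$. Thus the theorem is equivalent to the claim that the $\Delta^{\ell}$-weighted average $A_\ell$ of the face-internal-angle functional does not depend on $\ell$.

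To prove $A_\ell=A_0$ I would induct on $\ell$; it suffices to show $A_\ell=A_{\ell-1}$ for $\ell\ge1$, which, after relabelling the parameter, amounts to the case $\ell=1$ (for arbitrary $d$): attaching one factor of $\Delta$ to the weight while lowering the beta parameter by $\tfrac12$ leaves the expected internal angle at a fixed $k$-face unchanged. To reduce a general $k$ to the case of a vertex, I would invoke the orthogonal-complement device used in the proofs of Theorems~\ref{theo:external} and~\ref{theo:external_prime}: with $A=\aff(X_1,\ldots,X_k)$, and $Y',Y_1',\ldots,Y_{d+1-k}'$ the images under $I_{A^{\bot}}$ of $p(A)$ and of the orthogonal projections of $X_{k+1},\ldots,X_{d+1}$ onto $A^{\bot}$, one has $\beta([X_1,\ldots,X_k],[X_1,\ldots,X_{d+1}])=\alpha(\pos(Y_1'-Y',\ldots,Y_{d+1-k}'-Y'))$, i.e.\ it is the internal angle at the apex $Y'$ of the auxiliary simplex $[Y',Y_1',\ldots,Y_{d+1-k}']$; here $Y',Y_1',\ldots$ are independent beta (resp.\ beta$'$) points in $\R^{d-k+1+\ell}$ whose parameters depend on $\ell$ exactly so that the $\ell$-dependence of the original problem is faithfully inherited. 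The problem thus reduces to the same assertion for the internal angle at a vertex of a simplex built from independent beta points, one of which may carry a different parameter.

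The genuine obstacle is this vertex statement, equivalently the absorption of the $\Delta^{\ell}$ weight. Crucially, this step seems to require more than the Blaschke--Petkantschin formula: that formula only converts ``i.i.d.\ beta points in a larger space'' into ``$\Delta$-weighted beta points in a smaller space \emph{with the same parameter}'', which is exactly the reduction already performed in the first step and so yields nothing new --- the change of dimension must genuinely be traded for a change of parameter. Nor can one merely condition on the isometry type of the simplex, since $\Delta$ is a function of that isometry type whereas the conditional expectation of $\prod_i(1-\|Z_i\|^2)^{-1/2}$ given the isometry type is \emph{not} proportional to $\Delta^{-1}$ (already visibly so for triangles in the plane), so the naive factorisation of the weight breaks down and one must use that $g$ is specifically an internal angle and not an arbitrary isometry invariant. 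I would try to close this step by feeding the apex representation above into a second, iterated application of the canonical decomposition --- peeling off the vertices of the apex simplex one at a time (decomposing the apex together with the base, or the base alone with the apex projected afterwards), carefully tracking how the power of $\Delta$ and the beta parameters evolve at each stage --- until one reaches the trivial base case $k=d$ of a facet, where the internal angle is identically $\tfrac12$ and the claim is immediate.
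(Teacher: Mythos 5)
Your first two reductions do mirror the paper: the canonical decomposition of Theorem~\ref{theo:ruben_miles} turns the statement into the assertion that the $\Delta^{\ell}$-weighted expectation of the internal angle is independent of $\ell$, and the orthogonal-complement device rewrites the internal angle at the $k$-face as the solid angle $\alpha(\pos(V_1-V,\ldots,V_{d+1-k}-V))$ of a cone spanned by independent beta (resp.\ beta') points $V,V_1,\ldots,V_{d+1-k}$ in $\R^{d+\ell-k+1}$ whose parameters carry the $\ell$-dependence. But at exactly the point you yourself flag as ``the genuine obstacle'' the proposal stops being a proof: you do not establish that this expected solid angle is unchanged when the configuration is projected down to $\R^{d-k+1}$ (where, by Lemma~\ref{lem:projection}, the parameters no longer depend on $\ell$), and your suggested remedy --- iterating the canonical decomposition and ``peeling off'' vertices of the apex simplex until the facet case $k=d$ is reached --- is not carried out and is not likely to close the gap as stated: each application of Theorem~\ref{theo:ruben_miles} only trades ambient dimension for a $\Delta$-power at the \emph{same} parameter, which is precisely the non-move you correctly identified for Blaschke--Petkantschin, and the facet case gives no leverage on the angle at a lower-dimensional face of the auxiliary cone.

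The missing ingredient in the paper is a genuinely different mechanism: the conic Crofton formula. One writes the solid angle of the cone $C=\pos(V_1-V,\ldots,V_{d+1-k}-V)\subset\R^{d+\ell-k+1}$ as the Grassmann angle $h_{d-k+1}(C)$, i.e.\ as $\tfrac12\P[C\cap L_{\ell+1}'\neq\{0\}]$ for an independent uniform $(\ell+1)$-dimensional subspace $L_{\ell+1}'$, while the solid angle of the projected cone $\Pi C\subset\R^{d-k+1}$ is $\tfrac12\P[\Pi C\cap \lin(e_0)\neq\{0\}]$. By rotational invariance one may fix $L_{\ell+1}'=\lin(e_0,e_1,\ldots,e_\ell)$, and since $\Pi^{-1}(\lin(e_0)\setminus\{0\})=\lin(e_0,\ldots,e_\ell)\setminus\lin(e_1,\ldots,e_\ell)$, the two probabilities agree once one checks (via a general-position argument, \cite[Lemma 13.2.1]{SW08}) that $\lin(V_1-V,\ldots,V_{d+1-k}-V)$ meets $\lin(e_1,\ldots,e_\ell)$ only in $\{0\}$ almost surely. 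This identity $\E\,\alpha(C)=\E\,\alpha(\Pi C)$ is what absorbs the $\Delta^{\ell}$ weight (equivalently, the shift of the beta parameter by $\ell/2$), and without it, or an equivalent substitute, your argument does not yet prove the theorem. (A further small caveat: in the beta case the projection step forces $\beta-\tfrac{\ell}{2}>-1$ at intermediate stages, which is why the paper also needs the analytic-continuation machinery elsewhere; your induction on $\ell$ would face the same boundary issue at $\beta-\tfrac\ell2=-1$.)
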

\begin{proof}
For concreteness, we consider the beta case.  The main tool in the proof is Lemma~\ref{lem:projection} that states that the projection of $[X_1,\ldots,X_{d+1}]$ to $\R^d$ is a full-dimensional simplex whose vertices are i.i.d.\ with distribution $f_{d,\beta}$. We have to relate the expected internal angles of $[X_1,\ldots,X_{d+1}]$ to those of its projection.

In Section~\ref{subsec:external_angles_proofs}, especially in Equation~\eqref{eq:gamma_G_P_alpha_internal}, we have shown (with a different notation) that
$$
\beta([X_1,\ldots,X_{k}], [X_1,\ldots,X_{d+1}]) = \alpha (\pos (V_1-V,\ldots,V_{d+1-k}-V)),
$$
where
\begin{itemize}
\item[(a)]  $V, V_1,\ldots,V_{d+1-k}$ are independent points in $\R^{d+\ell-k+1}$ such that
\item[(b)]  $V_1,\ldots,V_{d+1-k}$ are i.i.d.\ with distribution $f_{d+\ell-k+1, \frac{2\beta-\ell+k-1} 2}$ and
\item[(c)]  $V$ has distribution $f_{d+\ell-k+1,\gamma}$ with $\gamma = \frac {(2\beta- \ell+d +\ell)k+ k-d-\ell-1}2 = \frac {(2\beta+d)k+ k-d-1}{2} -\frac \ell 2$.
\end{itemize}
Note that an increase of the dimension by $\ell$ is always accompanied by a decrease of the beta-parameter by $\frac \ell2$. Let $\Pi: \R^{d+\ell-k+1}\to\R^{d-k+1}$ be the orthogonal projection defined by
\begin{equation}\label{eq:def_projection_Pi}
\Pi (x_0,x_1,\ldots,x_{d+\ell-k}) = (x_0,x_{\ell+1}\ldots,x_{d+\ell-k}),\qquad (x_0,\ldots,x_{d+\ell-k})\in\R^{d+\ell-k+1}.
\end{equation}
By Lemma~\ref{lem:projection} (a), the joint distribution of the points $W:=\Pi V$, $W_1:=\Pi V_1,\ldots,W_{d+1-k}:=\Pi V_{d+1-k}$ can be described as follows:
\begin{itemize}
\item[(a)]  $W, W_1,\ldots,W_{d+1-k}$ are independent points in $\R^{d-k+1}$,
\item[(b)]  $W_1,\ldots,W_{d+1-k}$ are i.i.d.\ with distribution $f_{d-k+1, \frac{2\beta+k-1} 2}$,
\item[(c)]  $W$ has distribution $f_{d-k+1,\gamma'}$ with $\gamma' =  \frac {(2\beta+d)k+ k-d-1}{2}$.
\end{itemize}
In particular, their distribution does not depend on $\ell$. To prove the theorem, it suffices to show that
\begin{equation}\label{eq:E_alpha_V_eq_E_alpha_W}
\E \alpha (\pos (V_1-V,\ldots,V_{d+1-k}-V)) = \E \alpha (\pos (W_1-W,\ldots,W_{d+1-k}-W)).
\end{equation}
Since this identity becomes trivial for $\ell=0$, we shall henceforth assume that $\ell\in\N$. Using the definition of the solid angle, we have
\begin{align*}
2\,\E \alpha (\pos (W_1-W,\ldots,W_{d+1-k}-W))
=
\P[\pos (W_1-W,\ldots,W_{d+1-k}-W) \cap L_{1} \neq \{0\}],
\end{align*}
where $L_1 \in G(d-k+1, 1)$ is a uniformly distributed random line passing through the origin which is independent of everything else. Since the probability law of the cone $\pos (W_1-W,\ldots,W_{d+1-k}-W)$ is invariant under orthogonal transformations, we can replace $L_1$ by any fixed line, which leads to
$$
2\,\E \alpha (\pos (W_1-W,\ldots,W_{d+1-k}-W))
=
\P[\pos (W_1-W,\ldots,W_{d+1-k}-W) \cap \lin (e_0) \neq \{0\}],
$$
where $e_0,e_1,\ldots, e_{d-k}$ is the standard orthonormal basis of $\R^{d-k+1}$.
On the other hand, using the properties of conic intrinsic volumes and the conic Crofton formula, see, in particular, \eqref{eq:h_def} and~\eqref{eq:ConicalIntVolGrassmannAngle}, we can write
\begin{align*}
2\,\E \alpha (\pos (V_1-V,\ldots,V_{d+1-k}-V))
&=
2\,\E \upsilon_{d-k+1} (\pos (V_1-V,\ldots,V_{d+1-k}-V)) \\
&=
2\,\E h_{d-k+1} (\pos (V_1-V,\ldots,V_{d+1-k}-V))\\
&=
\P[\pos (V_1-V,\ldots,V_{d+1-k}-V) \cap L_{\ell + 1}' \neq \{0\}],
\end{align*}
where $L_{\ell + 1}'\in G(d+\ell-k+1,\ell+1)$ is a random, uniformly distributed $(\ell+1)$-dimensional linear subspace of $\R^{d+\ell-k+1}$ that is independent of everything else. Once again by rotational invariance of the probability law of the random cone $\pos (V_1-V,\ldots,V_{d+1-k}-V)$, we can replace $L_{\ell+1}'$ by an arbitrary deterministic $(\ell+1)$-dimensional linear subspace of our choice, which leads to
\begin{equation}\label{eq:alpha_pos_as_intersect}
\begin{split}
&2\,\E \alpha (\pos (V_1-V,\ldots,V_{d+1-k}-V))\\
&\qquad=
\P[\pos (V_1-V,\ldots,V_{d+1-k}-V) \cap \lin(e_0,e_1,\ldots,e_\ell) \neq \{0\}],
\end{split}
\end{equation}
where $e_0,e_1,\ldots, e_{d+\ell-k}$ is the standard orthonormal basis of $\R^{d+\ell-k+1}$. Recalling that $W=\Pi V$ and $W_i=\Pi V_i$ for $i\in\{1,\ldots,d-k+1\}$, and using the definition of the orthogonal projection $\Pi$ given in~\eqref{eq:def_projection_Pi}, we arrive at
\begin{align*}
&2\,\E \alpha (\pos (W_1-W,\ldots,W_{d+1-k}-W))\\
&\qquad=
\P[\pos (W_1-W,\ldots,W_{d+1-k}-W) \cap \lin (e_0) \neq \{0\}]\\
&\qquad=
\P[\Pi \pos (V_1-V,\ldots,V_{d+1-k}-V) \cap (\lin (e_0)\backslash \{0\}) \neq \varnothing]\\
&\qquad=
\P[\pos (V_1-V,\ldots,V_{d+1-k}-V) \cap \Pi^{-1}(\lin (e_0)\backslash \{0\}) \neq \varnothing].
\end{align*}
Now, $ \Pi^{-1}(\lin (e_0)\backslash \{0\}) = \lin(e_0,e_1,\ldots,e_\ell) \backslash \lin (e_1,\ldots,e_\ell)$. Thus, we can write
\begin{multline*}
2\,\E \alpha (\pos (W_1-W,\ldots,W_{d+1-k}-W))
\\=
\P[\exists v \in (\lin(e_0,e_1,\ldots,e_\ell)\backslash\{0\}) \backslash (\lin (e_1,\ldots,e_\ell)\backslash\{0\})\colon v\in \pos (V_1-V,\ldots,V_{d+1-k}-V)].
\end{multline*}
However, by rotational invariance of the involved distributions, the $(d+1-k)$-dimensional linear space $\lin (V_1-V,\ldots,V_{d+1-k}-V)$ has the uniform distribution $\nu_{d+1-k}$ on the Grassmannian $G(d+\ell-k+1,d+1-k)$. So, \cite[Lemma 13.2.1]{SW08} implies that the intersection of $\lin (V_1-V,\ldots,V_{d+1-k}-V)$ with the $\ell$-dimensional linear space $E:=\lin (e_1,\ldots,e_\ell)$ in $\R^{d+\ell-k+1}$ is $\{0\}$ with probability $1$. Indeed,
\begin{align*}
&\P[\lin (V_1-V,\ldots,V_{d+1-k}-V)\cap E\neq\{0\}] \\
&\qquad= \int_{{\rm SO}(d+\ell-k+1)}\ind_{\{\dim(\vartheta\lin(e_{\ell+1},\ldots,e_{d+\ell-k+1})\cap E)>0\}}\,\nu(\dint \vartheta) = 0,
\end{align*}
where ${\rm SO}(d+\ell-k+1)$ is the special orthogonal group in $\R^{d+\ell-k+1}$ with its unique invariant Haar probability measure $\nu$.
It follows that
\begin{align*}
&2\,\E \alpha (\pos (W_1-W,\ldots,W_{d+1-k}-W))\\
&\qquad=
\P[\pos (V_1-V,\ldots,V_{d+1-k}-V) \cap \lin(e_0,e_1,\ldots,e_\ell) \neq \{0\}]\\
&\qquad=
2\,\E \alpha (\pos (V_1-V,\ldots,V_{d+1-k}-V)),
\end{align*}
where we used~\eqref{eq:alpha_pos_as_intersect} in the last step. This proves~\eqref{eq:E_alpha_V_eq_E_alpha_W} and completes the proof of Theorem~\ref{theo:internal_angle_projection} in the beta case.

The proof in the beta' case is similar, but this time an increase of the dimension by $\ell$ is always accompanied by an \textit{increase} of the beta'-parameter by $\frac \ell2$, see Lemma~\ref{lem:projection} (b).
\end{proof}

\begin{remark}
Returning to the discussion at the beginning of this section, we can equivalently restate Theorem~\ref{theo:internal_angle_projection} as follows. Let $Y_1,\ldots,Y_{d+1}$ be  (in general, stochastically dependent) random points in $\R^d$ whose joint density is proportional to
$$
\Delta^{\ell} (y_1,\ldots,y_{d+1})\,\prod_{i=1}^{d+1} f_{d,\beta - \frac{\ell}{2} }(y_i).
$$
Then, the expected internal angle $\E \beta([Y_1,\ldots,Y_k], [Y_1,\ldots,Y_{d+1}])$ does not depend on the choice of $\ell\in\N_0$, as long as $\beta-\frac \ell 2 \geq -1$.  Indeed, by Theorem~\ref{theo:ruben_miles}, the joint distribution of $X_1,\ldots,X_{d+1}$ inside their own affine hull $\aff(X_1,\ldots,X_{d+1})$ is the same as the joint distribution of $Y_1,\ldots,Y_{d+1}$ up to rescaling, which does not change internal angles. A similar statement also holds in the beta' case.
\end{remark}

\subsection{Analytic continuation}
One of the main ideas used in our proofs is to raise the dimension. More precisely, we shall view the beta polytope $P_{n,d}^\beta\subset \R^d$ as a projection of $P_{n,d+1}^{\beta-1/2} \subset \R^{d+1}$; see Lemma~\ref{lem:projection} (a).  Since raising the dimension must be accompanied by lowering the parameter $\beta$, such a representation is possible for $\beta \geq -\frac 12$ only. For example the uniform distribution on $\bS^{d-1}$ (corresponding to $\beta=-1$) cannot be represented as a projection of a higher-dimensional beta distribution. It is for this reason that our proofs work for $\beta > -\frac 12$ only. In order to extend the results to the full range $\beta\geq -1$, we shall use analytic continuation. To this end, we need to show that the functionals  we are interested in, such as the expected internal angles of beta simplices, can be viewed as analytic functions of $\beta$.
The following lemma makes this precise and will be applied several times below.
Observe that for any fixed $x\in\BB^d$, we can consider
$$
f_{d,z}(x)= \frac{ \Gamma\left( \frac{d}{2} + z + 1 \right) }{ \pi^{ \frac{d}{2} } \Gamma\left( z+1 \right) }(1-\|x\|^2)^z
$$
as an analytic function of the complex variable $z$ on the half-plane $H_{-1}:=\{z\in\CC: \Re z>-1\}$.

\begin{lemma}\label{lem:Analytic}
Fix $d\in\N$, $n\in\N$, and let $\varphi:(\BB^d)^{n}\to \R$ be a bounded measurable function.   Then the function
$$
\cI(z):=\int_{(\BB^d)^{n}}\varphi(x_1,\ldots,x_{n})\left(\prod_{i=1}^{n} f_{d,z}(x_i)\right)\,
\lambda_d(\dint x_1) \ldots \lambda_d(\dint x_{n})
$$
is analytic on  the half-plane $H_{-1}$.
\end{lemma}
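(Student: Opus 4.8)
The plan is to reduce the claim to a routine combination of Fubini's theorem, Cauchy's theorem and Morera's theorem, after isolating the part of the integrand that depends on $z$ in a harmless way. First I would pull the normalising constant out of the integral, writing
$$
\cI(z)=\big(c_{d,z}\big)^{n}\,\tilde\cI(z),\qquad
\tilde\cI(z):=\int_{(\BB^d)^{n}}\varphi(x_1,\ldots,x_{n})\prod_{i=1}^{n}(1-\|x_i\|^2)^{z}\,\lambda_d(\dd x_1)\cdots\lambda_d(\dd x_{n}),
$$
where $c_{d,z}=\Gamma(\tfrac d2+z+1)/(\pi^{d/2}\Gamma(z+1))$ as in~\eqref{eq:def_f_beta}. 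The factor $(c_{d,z})^{n}$ is analytic on $H_{-1}$: for $\Re z>-1$ one has $\Re(\tfrac d2+z+1)\ge \tfrac d2>0$, so $\Gamma(\tfrac d2+z+1)$ has no poles there, while $1/\Gamma(z+1)$ is entire; hence it suffices to prove that $\tilde\cI$ is analytic on $H_{-1}$. Note also that for Lebesgue-almost every $(x_1,\ldots,x_n)\in(\BB^d)^n$, namely whenever $\|x_i\|<1$ for all $i$, the map
$$
z\longmapsto \prod_{i=1}^{n}(1-\|x_i\|^2)^{z}=\exp\Big(z\sum_{i=1}^{n}\log(1-\|x_i\|^2)\Big)
$$
is entire.

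Next I would establish a domination that is uniform on compact subsets of $H_{-1}$. Fix a compact set $K\subset H_{-1}$ and put $a:=\min_{z\in K}\Re z>-1$ and $M_\varphi:=\sup_{(\BB^d)^n}|\varphi|<\infty$. For $0<t\le 1$ the map $s\mapsto t^{s}$ is non-increasing on $\R$, so $(1-\|x_i\|^2)^{\Re z}\le (1-\|x_i\|^2)^{a}$ for every $z\in K$ and every $x_i\in\BB^d$. Hence
$$
\Big|\varphi(x_1,\ldots,x_n)\prod_{i=1}^{n}(1-\|x_i\|^2)^{z}\Big|\le M_\varphi\prod_{i=1}^{n}(1-\|x_i\|^2)^{a}=:\Phi(x_1,\ldots,x_n)\qquad\text{for all }z\in K,
$$
and $\Phi$ is integrable since $\int_{\BB^d}(1-\|x\|^2)^{a}\,\lambda_d(\dd x)=1/c_{d,a}<\infty$ for $a>-1$ (cf.~\eqref{eq:def_f_beta}), so that $\int\Phi=M_\varphi\,c_{d,a}^{-n}$. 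In particular $\tilde\cI$ is well defined on $H_{-1}$ and, by the dominated convergence theorem, continuous there.

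Finally I would invoke Morera's theorem. Let $T\subset H_{-1}$ be an arbitrary closed triangle. Applying the bound of the previous paragraph with $K=\partial T$ shows that $\int_{\partial T}\int_{(\BB^d)^n}\big|\varphi\prod_i(1-\|x_i\|^2)^{z}\big|\,\lambda_d(\dd x_1)\cdots\lambda_d(\dd x_n)\,|\dd z|<\infty$, which justifies Fubini's theorem, and therefore
$$
\oint_{\partial T}\tilde\cI(z)\,\dd z=\int_{(\BB^d)^{n}}\varphi(x_1,\ldots,x_n)\left(\oint_{\partial T}\prod_{i=1}^{n}(1-\|x_i\|^2)^{z}\,\dd z\right)\lambda_d(\dd x_1)\cdots\lambda_d(\dd x_{n})=0,
$$
the inner contour integral vanishing by Cauchy's theorem because $z\mapsto\prod_i(1-\|x_i\|^2)^{z}$ is entire. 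Since $\tilde\cI$ is continuous on $H_{-1}$ and its integral over the boundary of every triangle in $H_{-1}$ vanishes, Morera's theorem shows that $\tilde\cI$ is analytic on $H_{-1}$, and hence so is $\cI=(c_{d,z})^{n}\tilde\cI$. (Equivalently, one could differentiate under the integral sign, the interchange being justified by the same domination applied to the difference quotients together with a Cauchy estimate for $\partial_z\prod_i(1-\|x_i\|^2)^{z}$ on a slightly enlarged compact set.) The one point that genuinely needs attention is the integrability of $\prod_i(1-\|x_i\|^2)^{\Re z}$ near the boundary sphere $\{\|x_i\|=1\}$; this is precisely what the condition $\Re z>-1$ secures, and it is the reason $H_{-1}$ is the natural domain of analyticity here.
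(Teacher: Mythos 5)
Your proof is correct and follows essentially the same route as the paper: a locally uniform domination of the integrand by an integrable power of $(1-\|x_i\|^2)$, dominated convergence for continuity, and Morera's theorem combined with Fubini and Cauchy's theorem for analyticity. The only difference is that you factor out the normalizing constant $c_{d,z}^{n}$ before applying Morera, whereas the paper keeps the full density and bounds $|f_{d,z}|$ on compact subsets of $H_{-1}$; this is a cosmetic variation, not a different argument.
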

\begin{proof}
If $K\subset H_{-1}$ is a compact set, then there is a constant $C(K)$ depending only on $K$ such that
\begin{equation}\label{eq:est222}
|f_{d,z}(x)| = \left|\frac{ \Gamma\left( \frac{d}{2} + z + 1 \right) }{ \pi^{ \frac{d}{2} } \Gamma\left( z+1 \right) }(1-\|x\|^2)^z\right| \leq C(K) (1-\|x\|^2)^{\Re z}
\end{equation}
for all $x\in \BB^d$ and $z\in K$.
Since $\varphi$ is bounded and the function $(1-\|x\|^2)^{\Re z}$ is integrable over $\BB^d$ for $\Re z >-1$, the function $\cI(z)$ is well-defined.

\vspace*{2mm}
\noindent
\textit{Continuity.}
In a next step,  we claim that $\cI(z)$ is continuous on $H_{-1}$. To prove this, take a sequence $(z_k)_{k\in\N}\subset H_1$ with $z_k\to z\in H_1$, as $k\to\infty$. Then,
$$
|\cI(z) - \cI(z_k)| \leq \int_{(\BB^d)^{n}} |\varphi(x_1,\ldots,x_{n})| \left|\prod_{i=1}^{n}f_{d,z}(x_i)-\prod_{i=1}^{n}f_{d,z_k}(x_i)\right|\,
\lambda_d(\dint x_1) \ldots \lambda_d(\dint x_{n}).
$$
For every fixed $x_1,\ldots,x_{n}$ and as $k\to\infty$, the integrand converges to $0$,  because $\lim_{k\to\infty} f_{d,z_k}(x_i) = f_{d,z}(x_i)$ for all $x_1,\ldots,x_n\in\BB^d$. Moreover, recall that $\varphi$ is bounded and observe that by the triangle inequality and~\eqref{eq:est222},
\begin{equation*}
\left|\prod_{i=1}^{n}f_{d,z}(x_i)-\prod_{i=1}^{n}f_{d,z_k}(x_i)\right|
\leq
C(K)^{n} \prod_{i=1}^{n} (1-\|x_i\|^2)^{\Re z} + C(K)^{n} \prod_{i=1}^{n} (1-\|x_i\|^2)^{a}
\end{equation*}
with $K = \{z, z_1,\ldots\}$ being compact and $a := \inf_{k\in\N} \Re z_k > -1$. Since the function $(1-\|x_i\|^2)^a$ is integrable over $\BB^{d}$ for $a>-1$, the dominated convergence theorem applies, thus proving that $\cI(z_k)\to \cI(z)$, as $k\to\infty$. Hence, $\cI(z)$ is continuous.

\vspace*{2mm}
\noindent
\textit{Analyticity.}
To prove that $\cI(z)$ is analytic, let $\gamma\subset H_{-1}$ be any triangular contour. By Morera's theorem \cite[Theorem 10.17]{RudinRealComplexAnalysis} it suffices to show that
$$
\oint_\gamma\cI(z)\,\dint z = 0.
$$
But since the function $z\mapsto \prod_{i=1}^{n} f_{d,z}(x_i)$ is analytic on $H_{-1}$ for all $x_1,\ldots,x_n\in\BB^d$, Cauchy's integral theorem \cite[Theorem 10.14]{RudinRealComplexAnalysis} implies that, for all $x_1,\ldots,x_{n}\in\BB^d$,
$$
\oint_\gamma \prod_{i=1}^{n}f_{d,z}(x_i)\,\dint z = 0.
$$
Since $\varphi$ is bounded, for every $z\in\gamma$ and $x_1,\ldots,x_{n} \in\BB^d$ we have
$$
\left| \varphi(x_1,\ldots,x_{n})\left(\prod_{i=1}^{n}f_{d,z}(x_i)\right)\right|
\leq
\sup_{x_1,\ldots,x_n\in\BB^d} |\varphi(x_1,\ldots,x_n)| \cdot C(\gamma)^{n} \prod_{i=1}^{n}(1-\|x\|_i^2)^{b},
$$
where $b:=\inf_{z\in\gamma} \Re z>-1$.
Since the function $(1-\|x_i\|^2)^b$ is integrable over $\BB^{d}$ for $b>-1$, we may interchange the order of integration by Fubini's theorem, which  yields
\begin{align*}
\oint_\gamma\cI(z)\,\dint z = \int_{(\BB^d)^{n}}\varphi(x_1,\ldots,x_{n})
\left(\oint_\gamma \prod_{i=1}^{n}f_{d,z}(x_i)\,\dint z \right) \lambda_d (\dint x_1) \ldots \lambda_{d}(\dint x_{n})
= 0.
\end{align*}
Note that since $\gamma$ is a triangle, the contour integral can be reduced to usual Lebesgue integrals, which justifies the above use of Fubini's theorem.  The argument is complete.
\end{proof}

\begin{corollary}\label{cor:analytic_J}
The function $J_{m,\ell}(\alpha)$,  originally defined in Theorem~\ref{theo:f_vect} for real $\alpha>-1$, admits an extension to an  analytic function on the half-plane $H_{-1}=\{z\in\CC: \Re z>-1\}$.
\end{corollary}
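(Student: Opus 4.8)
The plan is to recognise $J_{m,\ell}(\alpha)$ as a particular instance of the integral $\cI(z)$ appearing in Lemma~\ref{lem:Analytic} and then simply invoke that lemma. First I would unwind the definition~\eqref{eq:J_definition}: since $Z_1,\ldots,Z_m$ are i.i.d.\ with density $f_{m-1,\alpha}$, which is supported on $\BB^{m-1}$, we have for every real $\alpha>-1$
$$
J_{m,\ell}(\alpha)=\int_{(\BB^{m-1})^{m}}\beta([z_1,\ldots,z_\ell],[z_1,\ldots,z_m])\left(\prod_{i=1}^{m}f_{m-1,\alpha}(z_i)\right)\lambda_{m-1}(\dd z_1)\cdots\lambda_{m-1}(\dd z_m).
$$
This is exactly the quantity $\cI(\alpha)$ of Lemma~\ref{lem:Analytic}, applied with the dimension parameter there equal to $m-1$, the number of points there equal to $m$, and the test function $\varphi(z_1,\ldots,z_m):=\beta([z_1,\ldots,z_\ell],[z_1,\ldots,z_m])$.

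Next I would verify that this $\varphi$ satisfies the hypotheses of Lemma~\ref{lem:Analytic}, namely that it is bounded and measurable on $(\BB^{m-1})^{m}$. Boundedness is immediate from the normalisation of solid angles, which gives $0\le\beta(F,P)\le 1$ for every face $F$ of every polytope $P$, so $\varphi$ takes values in $[0,1]$. For measurability one observes that for Lebesgue-almost every $(z_1,\ldots,z_m)$ the points are affinely independent, so that $[z_1,\ldots,z_m]$ is a genuine $(m-1)$-simplex and $[z_1,\ldots,z_\ell]$ is one of its $(\ell-1)$-dimensional faces; on this full-measure set the internal angle $\beta([z_1,\ldots,z_\ell],[z_1,\ldots,z_m])$ depends continuously on $(z_1,\ldots,z_m)$, since it is the standard Gaussian measure of the tangent cone and this cone varies continuously with the vertex configuration on the non-degenerate locus. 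On the complementary null set one may set $\varphi:=0$ without affecting the value of the integral, so $\varphi$ is a bounded Borel function.

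Finally I would apply Lemma~\ref{lem:Analytic}: the function $\cI(z)=\int_{(\BB^{m-1})^{m}}\varphi(z_1,\ldots,z_m)\prod_{i=1}^{m}f_{m-1,z}(z_i)\,\lambda_{m-1}(\dd z_1)\cdots\lambda_{m-1}(\dd z_m)$ is analytic on $H_{-1}=\{z\in\CC:\Re z>-1\}$, and by the computation above it coincides with $J_{m,\ell}(\alpha)$ for every real $\alpha>-1$. Hence $\cI$ is the desired analytic continuation, which proves the corollary. The only mildly delicate point in the whole argument is the measurability and almost-everywhere continuity of $\varphi$; everything else is a direct citation of Lemma~\ref{lem:Analytic}. (An entirely analogous statement for $\tilde J_{m,\ell}$ on the half-plane $\{\Re z>d/2\}$ would follow once one records the beta$'$ version of Lemma~\ref{lem:Analytic}, proved by the same dominated-convergence-plus-Morera scheme using a bound of the form $|\tilde f_{d,z}(x)|\le C(K)(1+\|x\|^2)^{-\Re z}$ on compacta.)
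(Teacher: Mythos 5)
Your proposal is correct and follows essentially the same route as the paper: the paper's proof is exactly the observation that $J_{m,\ell}(\alpha)=\cI(\alpha)$ for Lemma~\ref{lem:Analytic} applied with $d=m-1$, $n=m$ and $\varphi(x_1,\ldots,x_m)=\beta([x_1,\ldots,x_\ell],[x_1,\ldots,x_m])$, which is bounded by $1$ and measurable. Your extra discussion of measurability and the beta$'$ analogue is fine but not needed beyond what the paper records.
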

\begin{proof}
Apply Lemma~\ref{lem:Analytic} with $d=m-1$, $n=m$ and $\varphi(x_1,\ldots,x_m) = \beta ([x_1,\ldots,x_{\ell}], [x_1,\ldots,x_m])$, which is bounded by $1$ and measurable.
\end{proof}

\begin{corollary}\label{cor:analytic_f_k}
The function $\beta\mapsto \E f_k(P_{n,d}^\beta)$,  originally defined for real $\beta>-1$, admits an extension to an  analytic function on the half-plane $H_{-1}=\{z\in\CC: \Re z>-1\}$.
\end{corollary}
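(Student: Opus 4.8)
The plan is to recognise $\E f_k(P_{n,d}^\beta)$ as an instance of the integral $\cI(z)$ from Lemma~\ref{lem:Analytic} and to invoke that lemma directly. Set
$$
\varphi(x_1,\ldots,x_n):=f_k([x_1,\ldots,x_n]),\qquad (x_1,\ldots,x_n)\in(\BB^d)^{n},
$$
the number of $k$-dimensional faces of the convex hull of the $n$ points. For every real $\beta>-1$ the random points $X_1,\ldots,X_n$ have joint density $\prod_{i=1}^n f_{d,\beta}(x_i)$, so Fubini's theorem gives
$$
\E f_k(P_{n,d}^\beta)=\int_{(\BB^d)^{n}}\varphi(x_1,\ldots,x_n)\Big(\prod_{i=1}^{n} f_{d,\beta}(x_i)\Big)\,\lambda_d(\dint x_1)\cdots\lambda_d(\dint x_n)=\cI(\beta).
$$
Thus it is enough to check that $\varphi$ is bounded and Borel measurable; Lemma~\ref{lem:Analytic}, applied with exactly this $\varphi$, then yields that $z\mapsto\cI(z)$ is analytic on $H_{-1}$, and since $\cI$ agrees with $\beta\mapsto\E f_k(P_{n,d}^\beta)$ for real $\beta\in(-1,\infty)$, it is the desired analytic extension.

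First I would record that $\varphi$ is bounded: the convex hull of $n$ points has at most $2^n$ faces in total, since a face is determined by the set of those among $x_1,\ldots,x_n$ that it contains, whence $0\le\varphi\le 2^{n}$. Measurability of $\varphi$ is of the same routine nature as the measurability of the internal-angle functional used in the proof of Corollary~\ref{cor:analytic_J}: the combinatorial type of the configuration $(x_1,\ldots,x_n)$ is a semialgebraic datum (for a fixed index set $S$, the property ``$\conv\{x_i:i\in S\}$ is a $k$-face of $[x_1,\ldots,x_n]$'' is expressible by a first-order formula over the reals, using determinants to test affine dimension and side conditions to test supporting hyperplanes, so by Tarski--Seidenberg the corresponding set is semialgebraic, in particular Borel), and $\varphi$ is a finite sum of indicators of such Borel sets; hence $\varphi$ is Borel measurable.

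Once $\varphi$ is known to be bounded and Borel measurable, nothing further is required and the statement follows from Lemma~\ref{lem:Analytic}; there is essentially no obstacle, the only point deserving a sentence of care being the Borel measurability of the face-counting function. I would also remark that this corollary, together with Corollary~\ref{cor:analytic_J} (which gives the analyticity in $\beta$ of the right-hand side of~\eqref{eq:f_k_P_main}), is precisely what permits extending the formula of Theorem~\ref{theo:f_vect} from the range $\beta>-\tfrac12$, which is accessible by the dimension-raising argument, to the full range $\beta>-1$ via analytic continuation.
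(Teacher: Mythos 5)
Your proposal is correct and follows essentially the same route as the paper: the paper's proof is exactly an application of Lemma~\ref{lem:Analytic} with $\varphi(x_1,\ldots,x_n)=f_k([x_1,\ldots,x_n])$, bounded there by $\binom{n}{k+1}$ (your bound $2^n$ and your measurability remark via semialgebraicity are fine, just more detail than the paper records).
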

\begin{proof}
Apply Lemma~\ref{lem:Analytic} with  $\varphi(x_1,\ldots,x_{n}) = f_k([x_1,\ldots,x_n])$, which is bounded by $\binom {n}{k+1}$.
\end{proof}

\begin{remark}
Observe that the problem mentioned at the beginning of the section does not arise in the beta' case since by Lemma~\ref{lem:projection} (b) we can represent $\tilde P_{n,d}^{\beta}$ as a projection of $\tilde P_{n,d+1}^{\beta+1/2}$ for any $\beta>\frac d2$ and the new parameters also satisfy $\beta+ \frac 12 >\frac{d+1}{2}$. This is why we only treated the beta case here.
\end{remark}

\subsection{Expected \texorpdfstring{$f$}{f}-vector}
In this section we prove Theorems \ref{theo:f_vect} and \ref{theo:f_vect_prime}.

\begin{proof}[Proof of Theorem~\ref{theo:f_vect}.]
We are going to compute the expected $f$-vector of $P_{n,d}^\beta$. To this end, we shall represent this polytope as a random projection of a higher-dimensional polytope and then use the formula from Proposition \ref{theo:affentranger_schneider}.

\vspace*{2mm}
\noindent
\textit{Geometric argument.}
We take some $\ell\in\N$, assume that $\beta-\frac \ell2 >-1$ and consider the random polytope $P_{n,d+\ell}^{\beta-\frac \ell 2}$ in $\R^{d+\ell}$. Independently, let $L_d$ be a random, uniformly distributed $d$-dimensional linear subspace in $\R^{d+\ell}$ and denote by $\Pi_d$ the orthogonal projection on $L_d$. By Lemma~\ref{lem:projection} (a) we have that
\begin{equation}\label{eq:P_is_projection}
f_k(\Pi_d P_{n,d+\ell}^{\beta - \frac \ell 2}) \eqdistr f_k(P_{n,d}^{\beta}).
\end{equation}
In particular, the expectations of these quantities are equal. On the other hand, by Proposition~\ref{theo:affentranger_schneider}, we have that
$$
\E \left[ f_k(\Pi_d P_{n,d+\ell}^{\beta-\frac \ell  2}) \Big| P_{n,d+\ell}^{\beta-\frac \ell2} \right]
=
2 \sum_{s=0}^\infty \sum_{G\in \cF_{d-2s-1}(P_{n,d+\ell}^{\beta-\frac \ell2})} \gamma(G, P_{n,d+\ell}^{\beta-\frac \ell2}) \sum_{F\in\cF_{k}(G)} \beta(F,G).
$$
In the following we consider only terms with $d-2s\geq 1$ because all remaining terms are equal to $0$.
All $(d-2s-1)$-dimensional faces of $P_{n,d+\ell}^{\beta  - \frac \ell 2}$ have the form $G=[X_{i_1},\ldots, X_{i_{d-2s}}]$ for some indices $1\leq i_1<\ldots < i_{d-2s} \leq n$. By symmetry, the contributions of all these faces are equal, so we may just take $G = [X_1,\ldots,X_{d-2s}]$ (on the event that this is indeed a face) and write
\begin{multline*}
\E \left[f_k(\Pi_d  P_{n,d+\ell}^{\beta - \frac \ell 2})\right]
=
\E \left[\E \left[ f_k(\Pi_d  P_{n,d+\ell}^{\beta - \frac \ell 2}) \Big| P_{n,d+\ell}^{\beta - \frac \ell 2} \right]\right]\\
=
2 \sum_{s=0}^\infty \binom n {d-2s} \E \left[\gamma(G, P_{n,d+\ell}^{\beta - \frac \ell 2})\ind_{\left\{G \in \cF_{d-2s-1}( P_{n,d+\ell}^{\beta - \frac \ell 2})\right\}} \sum_{F\in \cF_k(G)} \beta(F,G) \right].
\end{multline*}
By~\eqref{eq:P_is_projection} and the independence part of Theorem~\ref{theo:external} (which is a crucial step in this proof allowing us to treat external and internal angles separately), we have
\begin{align*}
\E f_k(P_{n,d}^{\beta})
&=
\E \left[f_k(\Pi_d  P_{n,d+\ell}^{\beta - \frac \ell2})\right]\\
&=
2 \sum_{s=0}^\infty \binom n {d-2s} \E \left[\gamma(G, P_{n,d+\ell}^{\beta - \frac \ell 2})
\ind_{\left\{G \in \cF_{d-2s-1}( P_{n,d+\ell}^{\beta - \frac \ell 2})\right\}}\right] \E\left[\sum_{F\in \cF_k(G)} \beta(F,G) \right].
\end{align*}
By Theorem~\ref{theo:external} and recalling the convention that the external angle is $0$ if $G$ is not a face, we obtain
\begin{align*}
\E \left[\gamma(G, P_{n,d+\ell}^{\beta - \frac \ell 2})\ind_{\left\{G \in \cF_{d-2s-1}
(P_{n,d+\ell}^{\beta - \frac \ell 2})\right\}}\right]
&=
I_{n,d-2s}\left(2\left(\beta - \frac \ell 2\right)  + d +  \ell\right)\\
&=
I_{n,d-2s}(2\beta+d).
\end{align*}
Also, recalling that $G$ is the convex hull of i.i.d.\ random points $X_1,\ldots,X_{d-2s}$ in $\R^{d+\ell}$ with density $f_{d+\ell,\beta -\frac \ell 2}$, we apply Theorem~\ref{theo:internal_angle_projection} to deduce that
\begin{align*}
\E\left[\sum_{F\in \cF_k(G)} \beta(F,G) \right]
&=
\binom {d-2s}{k+1}
\E\beta([X_1,\ldots,X_{k+1}], [X_1,\ldots,X_{d-2s}])\\
&=
\binom {d-2s}{k+1}
\E\beta([X_1',\ldots,X_{k+1}'], [X_1',\ldots,X_{d-2s}'])\\
&=
\binom {d-2s}{k+1} J_{d-2s,k+1}\left(\beta + s + \frac 12\right),
\end{align*}
where $X_1',\ldots,X_{d-2s}'$ are i.i.d.\ random points in $\R^{d-2s-1}$ with density $f_{d-2s-1,\beta + s + \frac 12}$.
Taking everything together, we arrive at the final formula
\begin{equation}\label{eq:ProofXXX}
\E f_k(P_{n,d}^{\beta})
=
2 \sum_{s=0}^\infty \binom n {d-2s} \binom {d-2s}{k+1} I_{n,d-2s}(2\beta + d) J_{d-2s,k+1}\left(\beta + s + \frac 12\right).
\end{equation}
For the above argument, the value of $\ell\in\N$ was irrelevant, so that we can take $\ell=1$. Because of the restriction on $\beta$ at the very beginning of the argument, the proof so far only covers the case where $\beta > - \frac 12$.

\vspace*{2mm}
\noindent
\textit{Analytic continuation: Proof for $\beta>-1$.}
To extend the result to all $\beta>-1$ we argue by analytic continuation. For that purpose we first recall that by Corollary~\ref{cor:analytic_f_k}, the function $\beta\mapsto \E f_k(P_{n,d}^{\beta})$ admits an analytic continuation to  $\beta\in H_{-1} = \{z\in\CC: \Re z>-1\}$. On the other hand, also the right-hand side in~\eqref{eq:ProofXXX} admits an analytic extension to $\beta\in H_{-1}$. Indeed, for $J_{d-2s,k+1}(\beta+s+{1\over 2})$ this was observed in Corollary~\ref{cor:analytic_J}. For $I_{n,d-2s}(2\beta+d)$ this follows from the identity
$$
I_{n,d-2s}(2\beta+d) = \E \gamma([X_1,\ldots,X_{d-2s}], [X_1,\ldots,X_n]), \quad \beta>-1,
$$
where $X_1,\ldots,X_n$ are i.i.d.\ with density $f_{d,\beta}$
(see Theorem~\ref{theo:external}) and Lemma~\ref{lem:Analytic} with $\varphi(x_1,\ldots, x_n) = \gamma ([x_1,\ldots,x_{d-2s}], [x_1,\ldots,x_n])$.
Hence, by the uniqueness of analytic continuation (see~\cite[Corollary to Theorem 10.18]{RudinRealComplexAnalysis}), these two expressions must coincide for all $\beta\in(-1,\infty)$, since they already coincide for all $\beta\in(-{\frac 1 2},\infty)$.

\vspace*{2mm}
\noindent
\textit{Continuity: Proof for $\beta=-1$.}
To prove that~\eqref{eq:ProofXXX} also holds in the limiting case $\beta=-1$ corresponding to the uniform distribution on $\SS^{d-1}$, we shall argue that both sides of~\eqref{eq:ProofXXX} are continuous at $\beta=-1$. Regarding the left-hand side, we claim that
\begin{equation}\label{eq:lim_E_f_k_beta_minus_1}
\lim_{\beta \downarrow -1} \E f_k(P_{n,d}^\beta) = \E f_k(P_{n,d}^{-1})
\end{equation}
for all $d\geq 2$, $n\geq d+1$ and $k \in \{0,1,\ldots, d-1\}$.
To prove this, we observe that the mapping $(x_1,\ldots,x_n) \mapsto f_k([x_1,\ldots,x_n])$ from $(\BB^d)^n$ to $\{0,1,2,\ldots\}$ is continuous on the set $\text{GP}_{n,d}$ of all tuples $(x_1,\ldots,x_n)$ that are in general position (meaning that no $d+1$ points are located on a common affine hyperplane); see also~\cite[Lemma 4.1]{beta_polytopes}. Let $X_1^{(\beta)},\ldots,X_n^{(\beta)}\in\BB^d$ be i.i.d.\ random points with the beta density $f_{d,\beta}$ (if $\beta>-1$) or with the uniform distribution on $\bS^{d-1}$ (if $\beta=-1$).
From the proof of \cite[Proposition 3.9]{beta_polytopes} we conclude that we have the convergence in distribution
$$
(X_1^{(\beta)},\ldots,X_n^{(\beta)})\overset{d}{\longrightarrow} (X_1^{(-1)},\ldots,X_n^{(-1)}),
$$
as $\beta\downarrow -1$. Also, almost surely, $(X_1^{(-1)},\ldots,X_{n}^{(-1)}) \in \text{GP}_{n,d}$. The continuous mapping theorem then yields that
$$
f_k(P_{n,d}^\beta) \overset{d}{\longrightarrow} f_k(P_{n,d}^{-1}),
$$
as $\beta\downarrow -1$. Moreover, since almost surely $f_k(P_{n,d}^\beta) \leq \binom n{k+1}$ for all $\beta\geq -1$, we conclude from this  that~\eqref{eq:lim_E_f_k_beta_minus_1} holds.

It remains to prove that the right-hand side of~\eqref{eq:ProofXXX} is also continuous at $\beta = -1$. Indeed, for $J_{d-2s,k+1}\left(\beta + s + \frac 12\right)$ we even proved analyticity since $\beta + s+\frac 12 \geq -\frac 12$, while for $I_{n,d-2s}(2\beta + d)$ the continuity follows from the defining integral representation~\eqref{eq:I_definition} since $2\beta + d\geq d-2\geq 0$ for $d\geq 2$. Having proved that both sides are continuous at $\beta=-1$, we conclude that~\eqref{eq:ProofXXX} indeed holds for $\beta=-1$.
\end{proof}

\begin{proof}[Proof of Theorem~\ref{theo:f_vect_prime}.]
The proof for the beta' case is line by line the same as the one for Theorem~\ref{theo:f_vect} given before.   In addition to the distributional equality
$$
f_k(\Pi_d\tilde{P}_{n,d+\ell}^{\beta+{\ell\over 2}}) \overset{d}{=} f_k(\tilde{P}_{n,d}^\beta)
$$
that follows from Lemma \ref{lem:projection} (b), one now uses Theorem \ref{theo:external_prime} instead of Theorem \ref{theo:external} and the beta' case of Theorem \ref{theo:internal_angle_projection}. No analytic continuation and continuity arguments are needed in this case.
\end{proof}

\subsection{Proof of the monotonicity}
In this section we prove Theorems~\ref{theo:monoton} and~\ref{theo:monoton_prime}.
Fix $d\geq 2$, $n\geq d+1$ and $k\in\{0,1,\ldots,d-1\}$. Our aim is to prove that $\E f_k(P_{n,d}^{\beta}) < \E f_k(P_{n+1,d}^{\beta})$ and $\E f_k(\tilde P_{n,d}^{\beta}) < \E f_k(\tilde P_{n+1,d}^{\beta})$. In view of the formulae
\begin{align*}
\E f_k(P_{n,d}^{\beta})
&=
2 \sum_{s=0}^\infty \binom n {d-2s} \binom {d-2s}{k+1} I_{n,d-2s}(2\beta+d) J_{d-2s,k+1}\left(\beta + s + \frac 12\right),\\
\E f_k(\tilde P_{n,d}^{\beta})
&=
2 \sum_{s=0}^\infty \binom n {d-2s} \binom {d-2s}{k+1} \tilde I_{n,d-2s}(2\beta-d) \tilde J_{d-2s,k+1}\left(\beta - s - \frac 12\right),
\end{align*}
that follow from Theorem \ref{theo:f_vect} and Theorem \ref{theo:f_vect_prime}, respectively, it suffices to show that
\begin{equation}\label{eq:MonoBetaToShow}
\binom n {m} I_{n,m}(\alpha) \leq \binom {n+1} {m} I_{n+1,m}(\alpha)
\end{equation}
and
\begin{equation}\label{eq:MonoBetaPrimeToShow}
\binom n {m} \tilde I_{n,m}(\alpha) \leq \binom {n+1} {m} \tilde I_{n+1,m}(\alpha)
\end{equation}
for all   $\alpha\geq 0$ and $m \in \{1,\ldots,n-1\}$ with strict inequality holding if $m\neq 1$. Note that we do not need to consider the case $m=0$, since the term with $d-2s=0$ vanishes because then $\binom {d-2s}{k+1}=0$.
Recall from Theorems~\ref{theo:f_vect} and~\ref{theo:f_vect_prime} that
\begin{align}
\binom n {m} I_{n,m}(\alpha)
&=
\binom n {m}  \int_{-1}^{+1}  c_{1, \frac {\alpha m -1}{2}}
(1-t^2)^{\frac {\alpha m - 1}{2}}
\left(\int_{-1}^t c_{1, \frac{\alpha - 1}{2}} (1-s^2)^{\frac{\alpha - 1}{2}}\dd s\right)^{n-m} \dd t,
\label{eq:I_n_m_proof_monoton1}\\
\binom n {m} \tilde I_{n,m}(\alpha)
&=
\binom n {m} \int_{-\infty}^{+\infty} \tilde c_{1, \frac {\alpha m + 1}{2}}
(1+t^2)^{-\frac {\alpha m + 1}{2}}
\left(\int_{-\infty}^t \tilde c_{1, \frac{\alpha + 1}{2}} (1+s^2)^{-\frac{\alpha + 1}{2}}\dd s\right)^{n-m} \dd t.
\label{eq:I_n_m_proof_monoton2}
\end{align}
Note that the factors  $c_{1, \frac {\alpha m -1}{2}}$ and $\tilde c_{1, \frac {\alpha m + 1}{2}}$ appearing in the above formulae are strictly positive and do not depend on $n$. For this reason, these factors do not play any essential role in the sequel. To simplify the notation, we introduce the distribution function $F(t) = \int_{-\infty}^t f(s)\, \dd s$, where $f$ is the probability density on $\R$ given by
$$
f(s)
=
\begin{cases}
f_{1,\frac{\alpha-1}{2}}(s) = c_{1, \frac{\alpha - 1}{2}} (1-s^2)^{\frac{\alpha - 1}{2}} \ind_{\{|s|<1\}},
&\text{ in the beta case,}\\
\tilde f_{1,\frac{\alpha+1}2}(s) = \tilde c_{1, \frac{\alpha + 1}{2}} (1+s^2)^{-\frac{\alpha + 1}{2}},
&\text{ in the beta' case}.
\end{cases}
$$

Let first $m=1$. For concreteness, we consider the beta case. From~\eqref{eq:I_n_m_proof_monoton1} we have
$$
\binom n 1 I_{n,1} (\alpha) = n \int_{-1}^{+1}  f(t) F^{n-1}  (t)\dd t = 1.
$$
So, for $m=1$, both sides of~\eqref{eq:MonoBetaToShow} are equal to $1$. The beta' case is similar.

Let in the following  $m\in \{2,\ldots,n-1\}$.
From~\eqref{eq:I_n_m_proof_monoton1} and~\eqref{eq:I_n_m_proof_monoton2} we  see that it is necessary to study monotonicity in $n$ for expressions of the form
$$
g_{n,m} := \binom nm \int_{-\infty}^{+\infty} f^{(m-1) \gamma  + 1}(t) F^{n-m}(t) \dd t,
$$
where $\gamma=\frac{\alpha}{\alpha-1}$ with $\alpha=2\beta+d$ in the beta case and $\gamma=\frac{\alpha}{\alpha+1}$ with $\alpha=2\beta-d$ in the beta' case. Note that $\alpha\geq 0$ in both cases. Below, we shall consider the beta case with $\alpha=1$ separately, so let us assume that $\gamma$ is well-defined. 

\begin{lemma}\label{lem:monotone_g}
Assume that $f$ is a probability density on $\R$ that is strictly positive and continuously differentiable on some non-empty open interval $I\subseteq\R$ (which is allowed to coincide with the whole real line $\R$) and zero on $\R\setminus I$. If $\gamma\in\R$ and the function $\gamma f^{\gamma-2}(t) f'(t)$ is strictly decreasing on $I$,
then
$$
g_{n+1,m} > g_{n,m}
$$
for all $m\in\{2,3,\ldots\}$ and $n \in \{m,m+1,\ldots\}$.
\end{lemma}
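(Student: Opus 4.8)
The plan is to collapse the statement to a single concavity property by the substitution $u=F(t)$, and then to finish with a correlation inequality. Since $f$ is positive on $I$ and vanishes off $I$, $F$ is a $C^1$-diffeomorphism of $I$ onto $(0,1)$ with $\dd t=\dd u/f(F^{-1}(u))$. Absorbing one power of $f$ into this Jacobian, the change of variables $u=F(t)$ turns $f^{(m-1)\gamma+1}(t)F(t)^{n-m}\,\dd t$ into $\varphi(u)^{m-1}u^{n-m}\,\dd u$, where $\varphi(u):=f\bigl(F^{-1}(u)\bigr)^{\gamma}$, so that
$$g_{n,m}=\binom nm\int_0^1\varphi(u)^{m-1}u^{n-m}\,\dd u .$$

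The key step, and the place where the hypothesis enters, is to note that $\varphi$ is strictly concave on $(0,1)$: differentiating, $\varphi'(u)=\bigl(\gamma f^{\gamma-2}f'\bigr)\bigl(F^{-1}(u)\bigr)$ is the composition of the strictly decreasing function $\gamma f^{\gamma-2}f'$ with the strictly increasing $F^{-1}$, hence strictly decreasing. (In particular $\varphi$, being nonnegative and concave, is bounded, so the integral above converges.) From strict concavity I would extract the elementary fact that $u\mapsto \varphi(u)/(1-u)$ is strictly increasing on $(0,1)$: writing $h(v)=\varphi(1-v)$ and $h(0^+):=\lim_{v\downarrow0}h(v)\in[0,\infty)$, one has $\varphi(u)/(1-u)=\bigl(h(v)-h(0^+)\bigr)/v+h(0^+)/v$ at $v=1-u$, where the first term is strictly decreasing in $v$ by concavity and the second is nonincreasing since $h(0^+)\ge 0$.

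For the endgame, fix $m\ge 2$ and $n\ge m$, and write $\varphi(u)^{m-1}u^{n-m}=g(u)\,w(u)$ with $g(u):=\bigl(\varphi(u)/(1-u)\bigr)^{m-1}$ and $w(u):=(1-u)^{m-1}u^{n-m}$. Then $g\ge 0$ is strictly increasing (as $m-1\ge 1$), $w\ge 0$ has support $(0,1)$, and $u\mapsto u$ is increasing, so Chebyshev's integral inequality for similarly ordered functions gives $\int_0^1 u\,g w\,\dd u\cdot\int_0^1 w\,\dd u>\int_0^1 g w\,\dd u\cdot\int_0^1 u\,w\,\dd u$, the inequality being strict because $g$ is strictly increasing and $w>0$ on $(0,1)$. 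Since a Beta-integral evaluation gives $\int_0^1 u\,w\,\dd u\big/\int_0^1 w\,\dd u=(n-m+1)/(n+1)$, this rearranges to $\int_0^1\varphi^{m-1}u^{n-m+1}\,\dd u>\tfrac{n-m+1}{n+1}\int_0^1\varphi^{m-1}u^{n-m}\,\dd u$; multiplying by $\binom{n+1}{m}=\tfrac{n+1}{n-m+1}\binom nm$ yields $g_{n+1,m}>g_{n,m}$, as required.

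The substance, and the step I expect to be the main obstacle, is recognising that the opaque hypothesis on $\gamma f^{\gamma-2}f'$ is exactly what is needed for the post-substitution integrand to factor through a strictly concave function, and that concavity is precisely the property a correlation inequality can exploit against the $\mathrm{Beta}(n-m+1,m)$ weight --- whose mean $(n-m+1)/(n+1)$ is the critical threshold here. Once the comparison with this Beta distribution has been identified, the substitution in the first step and the Chebyshev argument in the last are routine, and the binomial-coefficient bookkeeping is mechanical; the borderline nature of the inequality (the supremum of the loss $g_{n,m}-g_{n+1,m}$ over admissible $\varphi$ is $0$, approached by $\varphi(u)\propto 1-u$) is consistent with the exclusion of the degenerate case $m=1$.
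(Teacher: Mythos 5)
Your proof is correct, and while it starts exactly as the paper does --- a change of variables onto $(0,1)$ followed by the observation that the hypothesis on $\gamma f^{\gamma-2}f'$ says precisely that $\varphi=f^{\gamma}\circ F^{-1}$ is strictly concave --- the way you convert that concavity into the inequality is genuinely different. The paper writes $g_{n+1,m}-g_{n,m}$ as a single integral of $h(s)\,g(s)\,L^{m-1}(s)$ with $L=f^{\gamma}\circ\bar F^{-1}$ strictly concave and $g$ linear with a root at $s^*=m/(n+1)$, and then invokes a chord-comparison lemma (the corrected Lemma~5 of Bonnet et al., stated as Lemma~\ref{lem:FromBGTTTW}) to replace $L^{m-1}$ by the power of the chord through $(0,0)$ and $(s^*,L(s^*))$, after which the same Beta-function identity shows the comparison term vanishes. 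You instead factor $\varphi^{m-1}u^{n-m}$ as $\bigl(\varphi(u)/(1-u)\bigr)^{m-1}\cdot(1-u)^{m-1}u^{n-m}$, note that strict concavity of $\varphi$ makes $\varphi(u)/(1-u)$ strictly increasing (the chord-slope argument through the finite endpoint limit $h(0^+)$ is fine, including strictness), and then apply Chebyshev's correlation inequality against the $\mathrm{Beta}(n-m+1,m)$ weight, whose mean $(n-m+1)/(n+1)$ is exactly the ratio $\binom{n}{m}/\binom{n+1}{m}$. What your route buys is self-containedness: you do not need the external lemma, only the standard two-variable symmetrization proof of Chebyshev's inequality, and the strictness is transparent since both comonotone functions are strictly increasing and the weight is positive on $(0,1)$; your boundedness remark for $\varphi$ also quietly settles finiteness of $g_{n,m}$, which the paper leaves implicit. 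What the paper's route buys is reuse of a lemma already needed elsewhere in that circle of papers and a formulation directly in terms of the difference $g_{n+1,m}-g_{n,m}$. All the bookkeeping in your last step (the Beta-moment ratio and the binomial identity $\binom{n+1}{m}=\tfrac{n+1}{n-m+1}\binom nm$) checks out.
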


For the proof we need the following slightly corrected version of Lemma~5 from~\cite{bonnet_etal}.
\begin{lemma}\label{lem:FromBGTTTW}
Let $h, g,L:(0,1)\to\R$ be three functions such that
\begin{enumerate}
\item $h$ is non-negative, measurable, and   $0 < \int_0^1 h(s)\dint s<\infty$;
\item $g$ is linear, with negative slope and a root at $s^*\in (0,1)$,
\item $L$ is non-negative and strictly concave on $(0,1)$.  
\end{enumerate}
Then, for all $m > 1$,
$$
\int_0^1 h(s)g(s)L^{m-1}(s)\,\dint s > \int_0^1 h(s)g(s)\left(\frac {L(s^*)}{s^*}s\right)^{m-1}\,\dint s.
$$
\end{lemma}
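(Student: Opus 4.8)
The plan is to compare $L$ on $(0,1)$ with the affine function
$$
\ell(s):=\frac{L(s^*)}{s^*}\,s,
$$
i.e.\ the secant through the origin and the point $(s^*,L(s^*))$, and to prove
\begin{equation}\label{eq:plan_L_vs_ell}
L(s)>\ell(s)\ \text{for all }s\in(0,s^*),\qquad L(s)<\ell(s)\ \text{for all }s\in(s^*,1).
\end{equation}
Granting \eqref{eq:plan_L_vs_ell}, the lemma follows at once: writing $g(s)=c(s^*-s)$ with $c>0$, we have $g>0$ on $(0,s^*)$ and $g<0$ on $(s^*,1)$, and since $t\mapsto t^{m-1}$ is strictly increasing on $[0,\infty)$ for $m>1$, the function $s\mapsto g(s)\bigl(L^{m-1}(s)-\ell^{m-1}(s)\bigr)$ is $\ge 0$ on $(0,1)\setminus\{s^*\}$ and strictly positive wherever $h(s)>0$. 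As the hypothesis $0<\int_0^1 h<\infty$ forces $\{h>0\}$ to have positive Lebesgue measure, multiplying by $h\ge 0$ and integrating yields the strict inequality. (All three integrals are finite because $h$ is integrable while $g$, $\ell$ and $L$ are bounded on $(0,1)$, the last since a non-negative concave function on a bounded interval is bounded.) I would also note right away that $L(s^*)>0$: a non-negative strictly concave function cannot vanish at an interior point, since writing $s^*$ as a strict convex combination of a point on each side and invoking strict concavity together with $L\ge 0$ would give $0=L(s^*)>0$; hence $\ell>0$ on $(0,1)$ and the powers $\ell^{m-1}$ cause no trouble.

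To prove \eqref{eq:plan_L_vs_ell} I would set $\psi:=L-\ell$, which is strictly concave on $(0,1)$ with $\psi(s^*)=0$. Being bounded, $\psi$ has finite limits at the endpoints, and in particular $\psi(0^+)=L(0^+)-\ell(0^+)=L(0^+)\ge 0$ by the non-negativity of $L$ — this is exactly the point where hypothesis~(3) enters. For $0<\varepsilon<s<s^*$, concavity gives $\psi(s)\ge\frac{s^*-s}{s^*-\varepsilon}\,\psi(\varepsilon)$; letting $\varepsilon\downarrow 0$ produces $\psi(s)\ge\frac{s^*-s}{s^*}\,\psi(0^+)\ge 0$, so $\psi\ge 0$ on $(0,s^*)$. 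Strict concavity then upgrades this to $\psi>0$ on $(0,s^*)$: write $s=\lambda s_0+(1-\lambda)s^*$ with $s_0\in(0,s)$ and $\lambda\in(0,1)$, so that $\psi(s)>\lambda\psi(s_0)+(1-\lambda)\psi(s^*)=\lambda\psi(s_0)\ge 0$. For the right interval, comparing the slope of $\psi$ over $[\varepsilon,s^*]$ with that over $[s^*,s]$ and letting $\varepsilon\downarrow 0$ gives $\frac{\psi(s)}{s-s^*}\le\frac{-\psi(0^+)}{s^*}\le 0$, hence $\psi\le 0$ on $(s^*,1)$; this is strict, for a zero of $\psi$ in $(s^*,1)$ together with the zero at $s^*$ would, by strict concavity, force $\psi>0$ strictly between them, contradicting $\psi\le 0$ there. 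This establishes \eqref{eq:plan_L_vs_ell} and finishes the proof.

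The argument is elementary, and I do not expect a genuine obstacle; the points requiring care are the boundary behaviour of the concave function at $0$ (its boundedness and, above all, the sign of the limit $\psi(0^+)=L(0^+)$, which is where the non-negativity assumption is truly used and which is the content of the correction to Lemma~5 of \cite{bonnet_etal}), and the passage from a weak to a strict inequality, which needs both that $\{h>0\}$ has positive measure and that $L-\ell$ is \emph{strictly} signed on each of the two open subintervals $(0,s^*)$ and $(s^*,1)$ rather than merely non-negative, respectively non-positive, there. Everything else reduces to carefully arranging these concavity estimates and the two limiting arguments as $\varepsilon\downarrow 0$.
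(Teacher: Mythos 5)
Your proof is correct. Note that the paper itself does not prove this lemma: it is imported as a ``slightly corrected version'' of Lemma~5 of \cite{bonnet_etal}, so there is no internal proof to compare against. Your argument is the natural secant-comparison one (and essentially the route taken in \cite{bonnet_etal}): subtract the line $\ell(s)=\frac{L(s^*)}{s^*}s$ from $L$, use concavity to get $L\ge\ell$ on $(0,s^*)$ and $L\le\ell$ on $(s^*,1)$, upgrade to strict inequalities by strict concavity, and then combine the sign of $g$ with the monotonicity of $t\mapsto t^{m-1}$ and the positivity of $\{h>0\}$ in measure. The points you flag are indeed the delicate ones: since $L$ is only assumed non-negative on the open interval, one must pass to the boundary limit $L(0^+)\ge 0$ (which exists and is finite because a non-negative concave function on a bounded open interval is bounded), and this is precisely where the hypothesis ``non-negative'' enters and what the ``slight correction'' to the original lemma amounts to; likewise your derivation of strictness of $L-\ell$ on both subintervals is what justifies the strict inequality between the integrals. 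One cosmetic remark: the factor $g\,(L^{m-1}-\ell^{m-1})$ is in fact strictly positive on all of $(0,1)\setminus\{s^*\}$, not merely where $h>0$; what matters for the conclusion is that the full integrand $h\,g\,(L^{m-1}-\ell^{m-1})$ is non-negative everywhere and strictly positive on $\{h>0\}\setminus\{s^*\}$, a set of positive Lebesgue measure, which is exactly how you argue.
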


\begin{proof}[Proof of Lemma~\ref{lem:monotone_g}]
Observe that under the assumptions of the lemma the distribution function $F$ is  strictly increasing and continuously differentiable on $I$. The tail function $\bar F(t)= 1-F(t)$ has thus a well-defined inverse $\bar F^{-1}$.  Using the definition of $g_{n,m}$ and then the substitution $\bar F(t) = s$, we arrive at
\begin{align*}
g_{n+1,m}- g_{n,m}
&= \int_{I} f^{(m-1) \gamma+1}(t) \bigg[{n+1\choose m}F(t)-{n\choose m}\bigg]F(t)^{n-m}\,\dd t\\
&= \int_0^1 f^{(m-1) \gamma}(\bar F^{-1}(s)) \bigg[{n+1\choose m}(1-s)-{n\choose m}\bigg](1-s)^{n-m}\,\dd s.
\end{align*}
Now, we define
\begin{align*}
h(s) := (1-s)^{n-m},\quad g(s) := {n+1\choose m}(1-s)-{n\choose m}\quad\text{and}\quad L(s):=f^{\gamma}(\bar F^{-1}(s))
\end{align*}
for $s\in(0,1)$. Clearly, the function $h$ is measurable, strictly positive and bounded,  the function $g$ is linear, has negative slope and root at $s^*=m/(n+1)\in (0,1)$. Moreover, the function $L$ is positive and we shall argue that $L$ is also strictly concave. Indeed, by the chain rule its derivative equals
$$
L'(s) = -\gamma f^{\gamma-2}(\bar F^{-1}(s)) f'(\bar F^{-1}(s)),
$$
which is strictly decreasing because $-\gamma f(t)^{\gamma-2} f'(t)$ is increasing and $\bar F^{-1}(s)$ is decreasing.
Thus, Lemma~\ref{lem:FromBGTTTW} can be applied to deduce that
\begin{align*}
 g_{n+1,m}- g_{n,m}
&=  \int_0^1 L^{m-1}(s)g(s)h(s)\,\dd s\\
&>  \bigg({L(s^*)\over s^*}\bigg)^{m-1}\int_0^1 s^{m-1}g(s)h(s)\,\dd s\\
&=  \bigg({L(s^*)\over s^*}\bigg)^{m-1}\int_0^1 s^{m-1}\bigg[{n+1\choose m}(1-s)-{n\choose m}\bigg](1-s)^{n-m}\,\dd s\\
&=  \bigg({L(s^*)\over s^*}\bigg)^{m-1}{n+1\choose m}\bigg[\int_0^1 s^{m-1}(1-s)^{n+1-m}\,\dd s\\
&\hspace{5cm}-{n-m+1\over n+1}\int_0^1 s^{m-1}(1-s)^{n-m}\,\dd s\bigg]\\
&=  \bigg({L(s^*)\over s^*}\bigg)^{m-1}{n\choose m}\Big[B(m,n-m+2)-{n-m+1\over n+1}B(m,n-m+1)\Big],
\end{align*}
where $B(x,y)=\int_0^1 s^{x-1}(1-s)^{y-1}\,\dd s$, $x,y>0$, is Euler's beta function. Since $B(x,y+1)={y\over x+y}B(x,y)$, the last expression in square brackets is equal to zero. Hence,
$
g_{n+1,m}- g_{n,m} > 0
$,
which is the desired inequality.
\end{proof}

\begin{proof}[Proof of Theorem~\ref{theo:monoton}]
As explained above, we need to prove the strict inequality in~\eqref{eq:MonoBetaToShow} for all $m\in \{2,\ldots,n-1\}$.
Recall that $\alpha = 2\beta +d\geq 0$, and consider first the case when $\alpha\notin\{0,1\}$.  In particular, this means that $\gamma= \frac{\alpha}{\alpha-1}$ is well defined.
To apply Lemma~\ref{lem:monotone_g}, we need to verify that the function $\gamma f^{\gamma-2}(t) f'(t)$ is strictly decreasing in $t\in (-1,1)$, where  $f(t) = c_{1, \frac{\alpha - 1}{2}} (1-t^2)^{\frac{\alpha - 1}{2}}$.
We have
$$
\gamma f^{\gamma-2}(t) f'(t) = -\alpha c_{1,\frac{\alpha-1}{2}}^{1/(\alpha-1)} \frac{t}{\sqrt{1-t^2}}, \qquad t\in (-1,1),
$$
which is strictly decreasing because $\alpha>0$. Lemma~\ref{lem:monotone_g} thus yields $g_{n+1,m} > g_{n,m}$, which can be written as
$$
\binom n {m} I_{n,m}(\alpha) < \binom {n+1} {m} I_{n+1,m}(\alpha).
$$
This establishes~\eqref{eq:MonoBetaToShow} and completes the proof when $\alpha \notin \{0,1\}$.
The case when $\alpha=0$ occurs if $(d,\beta)=(2,-1)$. Note that Theorem~\ref{theo:monoton} becomes trivial in this case, but we prefer to prove~\eqref{eq:MonoBetaToShow} in all cases. Formula~\eqref{eq:I_n_m_proof_monoton1} simplifies as follows:
$$
\binom nm I_{n,m}(0) = \binom nm \int_{-1}^{+1} f(t) F^{n-m}(t) \dint t = \frac 1 {n-m+1} \binom nm.
$$
It follows that for all $m\in \{2,\ldots,n-1\}$,
$$
\frac{\binom {n+1}m I_{n+1,m} (0) }{\binom nm I_{n,m} (0)}
=
\frac{n+1}{n-m+2}>1.
$$

Let finally $\alpha=1$, which occurs if $(d,\beta)$ is $(3,-1)$ or $(2, -1/2)$. The expression for $I_{n,m}(\alpha)$ given in~\eqref{eq:I_n_m_proof_monoton1} simplifies as follows:
\begin{align*}
\binom nm I_{n,m} (1)
&=
\binom nm  \int_{-1}^{+1} c_{1,\frac{m-1}{2}} (1-t^2)^{\frac{m-1}{2}} \left(\frac {1+t}{2}\right)^{n-m} \dint t\\
&=c_{1,{m-1\over 2}}2^m{n\choose m}\int_0^1 u^{n-{m\over 2}+{1\over 2}-1}(1-u)^{{m\over 2}+{1\over 2}-1}\,dint u\\
&={c_{1,{m-1\over 2}}2^m\over m!(n-m)!}\Gamma\Big(n-{m-1\over 2}\Big)\Gamma\Big({m+1\over 2}\Big),
\end{align*}
where we computed the integral by using the substitution $u:= (1+t)/2$ and the properties of the beta and the gamma function. It follows that for all $m\in\{2,\ldots,n-1\}$,
$$
\frac{\binom {n+1}m I_{n+1,m} (1) }{\binom nm I_{n,m} (1) } = {(n-m)!\over(n-m+1)!}{\Gamma(n-{m-1\over 2}+1)\over\Gamma(n-{m-1\over 2})} = \frac{n-\frac{m-1}{2}}{n-m+1} > 1.
$$
This completes the proof of Theorem~\ref{theo:monoton}.
\end{proof}

\begin{proof}[Proof of Theorem~\ref{theo:monoton_prime}]
Observe that $\alpha=2\beta-d >0$.
We take $\gamma = \frac{\alpha}{\alpha+1}$ and $f(t) = \tilde c_{1, \frac{\alpha + 1}{2}} (1+t^2)^{-\frac{\alpha + 1}{2}}$, $t\in\R$. Then,
$$
\gamma f^{\gamma-2}(t) f'(t) = -\alpha \tilde c_{1, \frac{\alpha + 1}{2}}^{-1/(\alpha+1)} \frac{t}{\sqrt{1+t^2}}, \qquad t\in \R,
$$
which is strictly decreasing in $t$. An application of Lemma~\ref{lem:monotone_g} yields $g_{n+1,m} > g_{n,m}$ for all $m\in \{2,\ldots,n-1\}$ and thus
$$
\binom n {m} \tilde I_{n,m}(\alpha) < \binom {n+1} {m} \tilde I_{n+1,m}(\alpha),
$$
which establishes~\eqref{eq:MonoBetaPrimeToShow} and  completes the argument.
\end{proof}

\subsection{Expected intrinsic volumes of tangent cones}

In this section we give proofs of Theorems~\ref{theo:expected_conic_tangent}, \ref{theo:internal}, \ref{theo:expected_conic_tangent_prime} and~\ref{theo:internal_prime}.

\begin{proof}[Proof of Theorem~\ref{theo:expected_conic_tangent}]
We shall use the indices $k\in\{1,\ldots,d\}$ and $\ell := j+1 \in \{k,\ldots,d\}$.
Then, by  the property of the Grassmann angles \eqref{eq:ConicalIntVolGrassmannAngle}, we have
\begin{align*}
2\E\left[h_{\ell+1}(T(G,P_{n,d}^\beta))\ind_{\{G\in\cF_{k-1}(P_{n,d}^\beta)\}}\right]
=
\P\left[T(G,P_{n,d}^\beta) \cap L_{d-\ell} \neq \{0\}\text{ and }G\in\cF_{k-1}(P_{n,d}^\beta)\right],
\end{align*}
where $L_{d-\ell}\in G(d, d-\ell)$ is a uniformly distributed linear subspace that is independent of everything else. Equivalently,
\begin{align*}
&\P\left[ G\in\cF_{k-1}(P_{n,d}^\beta)\right] -2\E\left[h_{\ell+1}(T(G,P_{n,d}^\beta))\ind_{\{G\in\cF_{k-1}(P_{n,d}^\beta)\}}\right] \\
&= \P\left[T(G,P_{n,d}^\beta) \cap L_{d-\ell} = \{0\}\text{ and }G\in\cF_{k-1}(P_{n,d}^\beta)\right].
\end{align*}
Let $\Pi_\ell$ denote the orthogonal projection on the $\ell$-dimensional linear subspace $L_{d-\ell}^\bot$.
It is known, see the proof of~(3.1) on p.~298 of~\cite{GruenbaumGA}, that on the event $G\in\cF_{k-1}(P_{n,d}^\beta)$, the probability that the intersection of $T(G,P_{n,d}^\beta)$ and $L_{d-\ell}$ is the null space $\{0\}$ is the same as the probability that $\Pi_\ell G$ is a $(k-1)$-face of the projected polytope $\Pi_\ell P_{n,d}^\beta$. Moreover, if $G\notin\cF_{k-1}(P_{n,d}^\beta)$, then $G$ a.s.\ contains an interior point of $P_{n,d}^\beta$, which under the projection $\Pi_\ell$ is mapped to a relative interior point of $\Pi_\ell P_{n,d}^\beta$, implying that $\Pi_\ell G$ cannot be a $(k-1)$-face in this case.
 It follows that
\begin{equation*}
\begin{split}
&\P\left[ G\in\cF_{k-1}(P_{n,d}^\beta)\right] -2\E\left[h_{\ell+1}(T(G,P_{n,d}^\beta))\ind_{\{G\in\cF_{k-1}(P_{n,d}^\beta)\}}\right]\\
&= \P\left[\Pi_\ell G \in \cF_{k-1}(\Pi_\ell P_{n,d}^\beta)\text{ and }G\in\cF_{k-1}(P_{n,d}^\beta)\right]\\
&= \P\left[\Pi_\ell G \in \cF_{k-1}(\Pi_\ell P_{n,d}^\beta)\right]\\
&=
\binom{n}{k}^{-1}\E f_{k-1} (\Pi_\ell P_{n,d}^\beta).
\end{split}
\end{equation*}
Since the probability law of $P_{n,d}^\beta$ is rotationally invariant, we can replace $L_{d-\ell}$ by any deterministic linear subspace of the same dimension. For example, we can take $L_{d-\ell} =  \lin (e_{\ell+1},\ldots,e_d)$, where $e_1,\ldots,e_d$ is the standard orthonormal basis in $\R^d$. In this case, $\Pi_\ell:\R^d\to\R^\ell$ becomes the orthogonal projection from $\R^d$ to $\R^\ell$ (which is identified with $L_{d-\ell}^\bot = \lin (e_1,\ldots, e_\ell))$ given by
$$
\Pi_\ell(x_1,\ldots,x_d) := (x_1,\ldots,x_\ell).
$$
By Lemma~\ref{lem:projection} (a), the random polytope $\Pi_\ell P_{n,d}^\beta$ has the same distribution as $P_{n,\ell}^{\beta+\frac{d-\ell}{2}}$. It follows that
\begin{equation}\label{eq:P_minus_2h}
\P\left[ G\in\cF_{k-1}(P_{n,d}^\beta)\right] -2\E\left[h_{\ell+1}(T(G,P_{n,d}^\beta))\ind_{\{G\in\cF_{k-1}(P_{n,d}^\beta)\}}\right]
=
\binom{n}{k}^{-1}\E f_{k-1} \big(P_{n,\ell}^{\beta+\frac{d-\ell}{2}}\big).
\end{equation}
These considerations are valid for $\ell  \in \{k,\ldots,d\}$, where we recall the convention $h_{d+1}(T(G,P_{n,d}^\beta))=0$.
Applying Theorem~\ref{theo:f_vect} to the right-hand side of \eqref{eq:P_minus_2h}, we can write
\begin{multline}\label{eq:eq_1}
\E\left[h_{\ell+1}(T(G,P_{n,d}^\beta))\ind_{\{G\in\cF_{k-1}(P_{n,d}^\beta)\}}\right]
=
{1\over 2} \P\left[ G\in\cF_{k-1}(P_{n,d}^\beta)\right]
\\
-\frac 1 {\binom{n}{k}} \sum_{s=0}^\infty \binom n {\ell-2s} \binom {\ell-2s}{k} I_{n,\ell-2s}(2\beta+d) J_{\ell-2s,k}\left(\beta + s+\frac{d - \ell + 1}2\right),
\end{multline}
for all $\ell \in \{k,\ldots,d\}$.  Let us argue that~\eqref{eq:eq_1} holds true in the slightly larger range $\ell \in \{k-2,\ldots,d\}$.
On the event $G\in\cF_{k-1}(P_{n,d}^\beta)$ we have
$$
\upsilon_{m}(T(G,P_{n,d}^\beta)) = 0 \quad\text{ for }\quad m \in \{0,\ldots, k-2\},
$$
because the cone $T(G,P_{n,d}^\beta)$ has a $(k-1)$-dimensional lineality space. It follows from~\eqref{eq:h_def} and the Gauss--Bonnet relation~\eqref{eq:gauss_bonnet} that on the event $G\in\cF_{k-1}(P_{n,d}^\beta)$,
$$
h_{k-1}(T(G,P_{n,d}^\beta)) = h_{k}(T(G,P_{n,d}^\beta)) = \frac 12.
$$
Hence, \eqref{eq:eq_1} is true for $\ell\in \{k-1,k-2\}$ because then the sum in~\eqref{eq:eq_1} is empty and both sides of~\eqref{eq:eq_1} are equal to $\frac 1 2 \P[ G\in\cF_{k-1}(P_{n,d}^\beta)]$.

Altogether, we have shown that~\eqref{eq:eq_1} holds true for $\ell \in \{k-2,\ldots,d\}$. Inserting $\ell-2$ in place of $\ell$ and shifting the summation index $s$ yields the identity
\begin{multline}\label{eq:eq_2}
\E\left[h_{\ell-1}(T(G,P_{n,d}^\beta))\ind_{\{G\in\cF_{k-1}(P_{n,d}^\beta)\}}\right]
=
{1\over 2} \P\left[ G\in\cF_{k-1}(P_{n,d}^\beta)\right]
\\-
\frac 1 {\binom{n}{k}} \sum_{s=1}^\infty \binom n {\ell-2s} \binom {\ell-2s}{k} I_{n,\ell-2s}(2\beta+d) J_{\ell-2s,k}\left(\beta + s+\frac{d - \ell + 1}2\right),
\end{multline}
which is valid for $\ell  \in \{k,\ldots,d+2\}$.

For the rest of the argument, let $\ell \in \{k,\ldots,d\}$. It follows from~\eqref{eq:h_def} that
$$
\upsilon_{\ell-1}(T(G,P_{n,d}^\beta)) = h_{\ell-1}(T(G,P_{n,d}^\beta)) - h_{\ell+1}(T(G,P_{n,d}^\beta)),
$$
with the usual convention that $h_{d+1}(C) = 0$ if $C\subset \R^d$.
Subtracting~\eqref{eq:eq_1} from~\eqref{eq:eq_2}, we see that on the right-hand side only the term with $s=0$ remains, while the left-hand side reduces to
\begin{multline*}
\E\left[h_{\ell-1}(T(G,P_{n,d}^\beta))\ind_{\{G\in\cF_{k-1}(P_{n,d}^\beta)\}}\right]
-
\E\left[h_{\ell+1}(T(G,P_{n,d}^\beta))\ind_{\{G\in\cF_{k-1}(P_{n,d}^\beta)\}}\right]
\\
=
\E\left[\upsilon_{\ell-1}(T(G,P_{n,d}^\beta))\ind_{\{G\in\cF_{k-1}(P_{n,d}^\beta)\}}\right].
\end{multline*}
We thus arrive at
\begin{align*}
\E\left[\upsilon_{\ell-1}(T(G,P_{n,d}^\beta))\ind_{\{G\in\cF_{k-1}(P_{n,d}^\beta)\}}\right]
&=
\frac {\binom n {\ell} \binom {\ell}{k}} {\binom{n}{k}}  I_{n,\ell}(2\beta+d) J_{\ell,k}\left(\beta + \frac{d - \ell + 1}2\right)\\
&={n-k\choose \ell-k}I_{n,\ell}(2\beta+d) J_{\ell,k}\left(\beta + \frac{d - \ell + 1}2\right).
\end{align*}
It remains to recall our convention that $T(G,P_{n,d}^\beta)=\R^d$ if $G\notin\cF_{k-1}(P_{n,d}^\beta)$ and to note that, by definition of the conic intrinsic volumes, $\upsilon_{\ell-1}(\R^d)$ vanishes as long as $\ell\leq d$.  This implies that
\begin{align*}
\E \upsilon_{\ell-1}(T(G,P_{n,d}^\beta)) &= \E\left[\upsilon_{\ell-1}(T(G,P_{n,d}^\beta))\ind_{\{G\in\cF_{k-1}(P_{n,d}^\beta)\}}\right]+\E\left[\upsilon_{\ell-1}(T(G,P_{n,d}^\beta))\ind_{\{G\notin\cF_{k-1}(P_{n,d}^\beta)\}}\right]\\
&={n-k\choose \ell-k}I_{n,\ell}(2\beta+d) J_{\ell,k}\left(\beta + \frac{d - \ell + 1}2\right),
\end{align*}
for all $\ell \in \{k,\ldots,d\}$.
Recalling that $\ell = j+1$, we arrive at the required formula.
\end{proof}

\begin{proof}[Proof of Theorem~\ref{theo:internal}]
Let first $k\in\{1,\ldots,d-1\}$.
Taking $\ell = d-1$ in~\eqref{eq:P_minus_2h}, we obtain
\begin{align*}
\E\left[h_{d}(T(G,P_{n,d}^\beta))\ind_{\{G\in\cF_{k-1}(P_{n,d}^\beta)\}}\right]
&=
\frac 12 \P\left[G\in\cF_{k-1}(P_{n,d}^\beta)\right]
-
\frac{1}{2\binom{n}{k}} \E f_{k-1} \big(P_{n,d-1}^{\beta+\frac{1}{2}}\big)\\
&=
\frac{1}{2\binom{n}{k}} \left(
\E f_{k-1} \big(P_{n,d}^{\beta}\big)
-
\E f_{k-1} \big(P_{n,d-1}^{\beta+\frac{1}{2}}\big)\right).
\end{align*}
Recall that $h_d(C) = \nu_d(C)$ is the internal angle of a $d$-dimensional cone $C$.
Applying Theorem~\ref{theo:f_vect} two times (which would fail in the case $k=d$) and introducing the summation indices $m:=d-2s$ and $m:=d-2s-1$ in the resulting sums, we arrive at
\begin{align*}
\E \left[\beta(G, P_{n,d}^\beta) \ind_{\{G\in\cF_{k-1}(P_{n,d}^\beta)\}} \right]
&=
\frac 1 {\binom nk} \sum_{m=k}^d (-1)^{d-m} \binom nm \binom mk I_{n,m}(2\beta+d) J_{m,k}\left(\beta +\frac{d-m+1}{2}\right)\\
&=
\sum_{m=k}^d (-1)^{d-m} \binom {n-k}{m-k} I_{n,m}(2\beta+d) J_{m,k}\left(\beta +\frac{d-m+1}{2}\right),
\end{align*}
which proves the first identity of the theorem. To prove the second identity, also assuming that $k\in\{1,\ldots,d-1\}$, we recall that on the event $G\notin\cF_{k-1}(P_{n,d}^\beta)$, the internal angle $\beta(G, P_{n,d}^\beta)$ is defined to be $1$, hence
\begin{align*}
\E \beta(G, P_{n,d}^\beta)
&=
\E \left[\beta(G, P_{n,d}^\beta) \ind_{\{G\in\cF_{k-1}(P_{n,d}^\beta)\}} \right]
+
\E \left[\ind_{\{G\notin\cF_{k-1}(P_{n,d}^\beta)\}} \right]\\
&=
\frac 12 \P\left[G\in\cF_{k-1}(P_{n,d}^\beta)\right]
-
\frac{1}{2\binom{n}{k}} \E f_{k-1} \big(P_{n,d-1}^{\beta+\frac{1}{2}}\big)
+
1 - \P\left[G\in\cF_{k-1}(P_{n,d}^\beta)\right]\\
&=
1- \frac 1{2\binom{n}{k}} \left(
\E f_{k-1} \big(P_{n,d}^{\beta}\big)
+
\E f_{k-1} \big(P_{n,d-1}^{\beta+\frac{1}{2}}\big)\right).
\end{align*}
Applying Theorem~\ref{theo:f_vect} twice, we obtain
$$
\E \beta(G, P_{n,d}^\beta)
=
1- \frac 1 {\binom nk} \sum_{m=k}^d \binom nm \binom mk I_{n,m}(2\beta+d) J_{m,k}\left(\beta +\frac{d-m+1}{2}\right),
$$
which proves the second identity of the theorem.

The remaining case $k=d$ is easy to treat because then we have $\beta(G, P_{n,d}^\beta) = 1/2$ on the event $G\in\cF_{d-1}(P_{n,d}^\beta)$ and  $\beta(G, P_{n,d}^\beta) = 1$ otherwise. It follows that
$$
\E \left[\beta(G, P_{n,d}^\beta) \ind_{\{G\in\cF_{d-1}(P_{n,d}^\beta)\}} \right]
=
\frac 12 \P\left[G\in\cF_{d-1}(P_{n,d}^\beta)\right]
=
\frac {\E f_{d-1} \big(P_{n,d}^{\beta}\big)}{2\binom{n}{d}}
=
I_{n,d}(2\beta+d)
$$
by Remark~\ref{rem:SpecialCasesMainResultBeta}. This proves the first identity. The second one can be established analogously.
\end{proof}

\begin{proof}[Proof of Theorem \ref{theo:expected_conic_tangent_prime}]
As in the beta case, see \eqref{eq:P_minus_2h}, one shows that
$$
\E\left[h_{\ell+1}(T(G,\tilde P_{n,d}^\beta))\ind_{\{G\in\cF_{k-1}(\tilde P_{n,d}^\beta)\}}\right]
=
\frac 12 \P\left[ G\in\cF_{k-1}(P_{n,d}^\beta)\right]  - \frac1{2\binom{n}{k}} \E f_{k-1} \big(\tilde P_{n,\ell}^{\beta-\frac{d-\ell}{2}}\big),
$$
for all $\ell \in \{k,\ldots,d\}$,
where we used Lemma \ref{lem:projection} (b) instead of part (a). Applying Theorem \ref{theo:f_vect_prime}  to the right-hand side, we can write
\begin{multline}\label{eq:eq_1_prime}
\E\left[h_{\ell+1}(T(G,\tilde P_{n,d}^\beta))\ind_{\{G\in\cF_{k-1}(\tilde P_{n,d}^\beta)\}}\right]
=
{1\over 2} \P\left[ G\in\cF_{k-1}(\tilde P_{n,d}^\beta)\right]
\\
-\frac 1 {\binom{n}{k}} \sum_{s=0}^\infty \binom n {\ell-2s} \binom {\ell-2s}{k} \tilde I_{n,\ell-2s}(2\beta - d) \tilde J_{\ell-2s,k}\left(\beta  - s - \frac{d - \ell + 1}2\right),
\end{multline}
for all $\ell \in \{k,\ldots,d\}$. From this point the proof can be completed as the one of Theorem~\ref{theo:expected_conic_tangent}.
\end{proof}

\begin{proof}[Proof of Theorem~\ref{theo:internal_prime}]
Since this is analogous to the proof of Theorem~\ref{theo:internal}, we refrain from presenting the details.
\end{proof}

\subsection{The Poisson limit for beta' polytopes}\label{sec:proof_poisson_limit}
In this section we prove Theorem~\ref{theo:f_vect_poisson}.
\begin{lemma}\label{lem:asymptotics_I}
Fix some $\alpha>0$ and $\beta>0$.  As $n\to\infty$, we have
\begin{align*}
A_n
:=
\int_{-\infty}^{+\infty}
(1+t^2)^{-\frac {\beta + 1}{2}}
\left(\int_{-\infty}^t \tilde c_{1, \frac{\alpha + 1}{2}} (1+s^2)^{-\frac{\alpha + 1}{2}}\dd s\right)^{n} \dd t
\sim
\frac {\Gamma(\beta/\alpha)}{\alpha} \left(\frac{\alpha}{\tilde c_{1, \frac{\alpha + 1}{2}}}\right)^{\beta/\alpha} n^{-\beta/\alpha}.
\end{align*}
\end{lemma}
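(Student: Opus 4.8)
The plan is to analyze the asymptotics of $A_n$ via the standard Laplace/extreme-value heuristic: as $n\to\infty$, the inner integral $F(t):=\int_{-\infty}^t \tilde c_{1,\frac{\alpha+1}2}(1+s^2)^{-\frac{\alpha+1}2}\,\dd s$ raised to the $n$-th power concentrates the mass of the outer integral near $t=+\infty$, where $F(t)$ is close to $1$. So the first step is to understand the tail behaviour of $1-F(t)$ as $t\to+\infty$. Since $(1+s^2)^{-\frac{\alpha+1}2}\sim s^{-\alpha-1}$, we get
\[
1-F(t) = \tilde c_{1,\frac{\alpha+1}2}\int_t^\infty (1+s^2)^{-\frac{\alpha+1}2}\,\dd s \sim \frac{\tilde c_{1,\frac{\alpha+1}2}}{\alpha}\, t^{-\alpha}, \qquad t\to+\infty.
\]
Similarly the outer density satisfies $(1+t^2)^{-\frac{\beta+1}2}\sim t^{-\beta-1}$.

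The second step is the change of variables that turns this into a clean limit. A natural substitution is $u = n\,(1-F(t))$, or equivalently to write $F(t)^n = (1-(1-F(t)))^n \approx e^{-n(1-F(t))}$ and substitute $t = (\tilde c_{1,\frac{\alpha+1}2} n/(\alpha v))^{1/\alpha}$ so that $n(1-F(t))\to v$. Under this substitution $\dd t$ contributes a factor proportional to $n^{1/\alpha} v^{-1/\alpha-1}$, and the outer factor $(1+t^2)^{-\frac{\beta+1}2}\sim t^{-\beta-1}$ contributes $(\tilde c_{1,\frac{\alpha+1}2}n/(\alpha v))^{-(\beta+1)/\alpha}$. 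Collecting powers of $n$ gives $n^{1/\alpha}\cdot n^{-(\beta+1)/\alpha} = n^{-\beta/\alpha}$, and the $v$-integral becomes $\int_0^\infty v^{(\beta+1)/\alpha - 1/\alpha - 1} e^{-v}\,\dd v = \int_0^\infty v^{\beta/\alpha - 1}e^{-v}\,\dd v = \Gamma(\beta/\alpha)$. Tracking the constant $\tilde c_{1,\frac{\alpha+1}2}$ through these exponents yields exactly $\frac{\Gamma(\beta/\alpha)}{\alpha}\bigl(\alpha/\tilde c_{1,\frac{\alpha+1}2}\bigr)^{\beta/\alpha}$, as claimed. (Note $A_n$ is symmetric under $t\mapsto -t$ only in a loose sense — actually the contribution near $t=-\infty$, where $F(t)^n\to 0$ superpolynomially, is negligible, so only the $t\to+\infty$ end matters.)

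The third step is to make this rigorous, i.e.\ to justify passing the limit inside the integral after the substitution. The cleanest route is dominated convergence: one shows that the integrand in the $v$-variable, for all $n$ large, is bounded by an integrable majorant of the form $C v^{\beta/\alpha-1} e^{-cv}$ on $v\in(0,\infty)$ (using the inequality $F(t)^n\le e^{-n(1-F(t))}$ for the upper bound on the exponential part, and elementary bounds on $1-F(t)$ and on the outer density that hold uniformly once $t$ is bounded away from the singularities), together with pointwise convergence of the integrand. One must also separately check that the part of the original integral over any fixed compact $t$-interval, and the part over $t\to-\infty$, contribute $o(n^{-\beta/\alpha})$ — the former because $F(t)^n\to 0$ geometrically there, the latter likewise.

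The main obstacle is purely technical bookkeeping: getting the constant right. The exponents $1/\alpha$, $(\beta+1)/\alpha$ and the various factors of $\tilde c_{1,\frac{\alpha+1}2}$ and $\alpha$ must be combined carefully, and it is easy to be off by a power of $\alpha$; a good sanity check is to specialize to, say, $\beta = \alpha$ (where the integral becomes more symmetric) or to compare with the known case in \cite{convex_hull_sphere}. The analytic estimates themselves (uniform tail bounds on $1-F$, domination) are routine; the only subtlety is ensuring the majorant is integrable near $v=0$, which requires $\beta/\alpha>0$ — this is exactly the hypothesis $\beta>0$.
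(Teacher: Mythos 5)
Your proposal is correct and follows essentially the same route as the paper's proof: both rest on the tail asymptotics $1-F(t)\sim\frac{\tilde c_{1,\frac{\alpha+1}{2}}}{\alpha}t^{-\alpha}$, an $n$-dependent scaling substitution, and a dominated convergence argument with the regions $t$ bounded and $t\to-\infty$ treated separately (and your exponents and the final constant check out). The only cosmetic difference is that you substitute $v\approx n(1-F(t))$ and land directly on $\int_0^\infty v^{\beta/\alpha-1}e^{-v}\,\dd v=\Gamma(\beta/\alpha)$, whereas the paper sets $t=n^{1/\alpha}u$ and evaluates the resulting Fr\'echet-type integral $\int_0^\infty u^{-(\beta+1)}e^{-\alpha^{-1}\tilde c_{1,\frac{\alpha+1}{2}}u^{-\alpha}}\,\dd u$; these differ only by a fixed change of variables.
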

\begin{proof}
To simplify the notation, we shall write $C_\alpha$ for $\tilde c_{1, \frac{\alpha + 1}{2}}$.
Using the change of variables $t= n^{1/\alpha} u$, we have
\begin{align*}
A_n
=
 n^{1/\alpha} \int_{-\infty}^{+\infty}
(1 + n^{2/\alpha} u^2)^{-\frac {\beta + 1}{2}}
\left(1 - \int_{n^{1/\alpha}u}^{+\infty} C_\alpha (1+s^2)^{-\frac{\alpha + 1}{2}}\dd s\right)^{n} \dd u
=
n^{-\frac{\beta}{\alpha}}
\int_{-\infty}^{+\infty}
g_n(u) \dd u,
\end{align*}
where
$$
g_n(u) =
(n^{-2/\alpha} +  u^2)^{-\frac {\beta + 1}{2}}
\left(1 - \int_{n^{1/\alpha}u}^{+\infty} C_\alpha (1+s^2)^{-\frac{\alpha + 1}{2}}\dd s\right)^{n}.
$$
Applying the L'Hospital rule, it is easy to check that for every positive $u>0$,
\begin{equation}\label{eq:asympt_tail_integral}
\int_{n^{1/\alpha}u}^{+\infty} C_\alpha (1+s^2)^{-\frac{\alpha + 1}{2}}\dd s
\sim
\alpha^{-1} C_\alpha (n^{1/\alpha} u)^{-\alpha}
=
\alpha^{-1} C_\alpha u^{-\alpha} n^{-1},
\end{equation}
as $n\to\infty$. It follows that for all $u>0$,
$$
\lim_{n\to\infty} g_n(u)
=
\begin{cases}
u^{-(\beta + 1)}
\eee^{-\alpha^{-1}C_\alpha u^{-\alpha}}, & \text{ if } u>0,\\
0, &\text{ if } u\leq 0.
\end{cases}
$$
In fact, the case $u \leq 0$ follows from the observation that
\begin{equation}\label{eq:tech}
\int_{n^{1/\alpha}u}^{+\infty} C_\alpha (1+s^2)^{-\frac{\alpha + 1}{2}}\dd s \geq 1/2, \quad u\leq 0.
\end{equation}
Assuming that we can apply the dominated convergence theorem, we arrive at
$$
A_n
=
n^{-\frac{\beta}{\alpha}}
\int_{-\infty}^{+\infty}
g_n(u) \dd u
\sim
n^{-\frac{\beta}{\alpha}}
\int_0^{+\infty} u^{-(\beta + 1)}
\eee^{-\alpha^{-1} C_\alpha u^{-\alpha}} \dd u
=
\frac {\Gamma(\beta/\alpha)}{\alpha} \left(\frac{\alpha}{C_\alpha}\right)^{\beta/\alpha} n^{-\beta/\alpha},
$$
which is the required claim.

Let us justify the use of the dominated convergence theorem above. First of all, observe that $g_n(u)\geq 0$ by definition. Further, we have
$
g_n(u) \leq  |u|^{-(\beta + 1)}
$,
with the right-hand side being integrable over $\{|u|\geq 1\}$. To construct an integrable bound for $u\in (-1,0)$, observe that according to \eqref{eq:tech}, in this range we have  $g_n(u) \leq n^{\frac{\beta+1}{\alpha}} 2^{-n}$, which in turn is bounded by a constant.
Finally, in the case when $u\in (0,1)$, we use the estimate
\begin{equation}\label{eq:est_tail111}
\int_{n^{1/\alpha}u}^{+\infty} C_\alpha (1+s^2)^{-\frac{\alpha + 1}{2}}\dd s \geq c_1(1+n^{2/\alpha}u^2)^{-\alpha/2}, \qquad u\geq 0,
\end{equation}
valid for some constant $c_1>0$. To prove this estimate, note that as functions of $n^{1/\alpha}u$, both expressions are continuous and non-zero on $[0,\infty)$. Since the quotient of both expressions tends to a non-zero constant as $n^{1/\alpha}u\to \infty$, see the asymptotic equivalence~\eqref{eq:asympt_tail_integral}, we can conclude~\eqref{eq:est_tail111}.  An estimate similar to~\eqref{eq:est_tail111}  was used in~\cite[Equation (1)]{BonnetEtAlThresholds}.
Now, we distinguish the two cases $u^2>n^{-2/\alpha}$ and $0< u^2\leq n^{-2/\alpha}$. In the first case, that is, if $u^2>n^{-2/\alpha}$, we  use the inequality $(1-x)^n \leq \eee^{-nx}$, $0\leq x <1$, to deduce that
\begin{multline*}
g_n(u)
\leq
u^{-(\beta+1)}\exp\left\{-n \int_{n^{1/\alpha}u}^{+\infty} C_\alpha (1+s^2)^{-\frac{\alpha + 1}{2}}\dd s \right\}
\leq
u^{-(\beta+1)}\exp\{-c_1 n (1+n^{2/\alpha}u^2)^{-\alpha/2}\}
\\
\leq
u^{-(\beta+1)}\exp\{-c_1 n (2 n^{2/\alpha}u^2)^{-\alpha/2}\}
=
u^{-(\beta+1)}\exp\{-c_2 u^{-\alpha}\},
\end{multline*}
where $c_2>0$ is another constant. On the other hand, if $0 < u^2\leq n^{-2/\alpha}$, then, again using the inequality $(1-x)^n \leq \eee^{-nx}$, $0\leq x <1$, we have that
$$
g_n(u)
\leq
n^{\beta+1\over \alpha}\exp\{-c_1 n (1+n^{2/\alpha}u^2)^{-\alpha/2}\}
\leq
n^{\beta+1\over \alpha}\exp\{-c_1 n 2^{-\alpha/2}\}
=
n^{\beta+1\over \alpha}\exp\{-c_3 n\} \leq c_4
$$
with suitable constants $c_3,c_4>0$. Altogether this shows that for $u\in(0,1)$, we have the upper bound
$$
g_n(u) \leq \max\{c_4,u^{-(\beta+1)}\exp\{-c_2u^{-\alpha}\}\} \leq c_5
$$
with some constant $c_5>0$. The proof is thus complete.
\end{proof}

\begin{proof}[Proof of Theorem~\ref{theo:f_vect_poisson}]
It was shown in~\cite{convex_hull_sphere}  that
\begin{equation}\label{eq:E_f_k_to_E_f_k_Poi}
\E f_k(\conv \Pi_{d,\alpha})
=
\lim_{n\to\infty}\E f_k\left(\tilde P_{n,d}^{\frac{d+\alpha}{2}}\right).
\end{equation}
In fact, only the case $\alpha=1$ was considered in~\cite{convex_hull_sphere}, but as we explained at the end of Section~\ref{subsec:convex_hulls_half_sphere},  the same proof applies to any $\alpha>0$.
So, we have to compute the limit on the right-hand side of~\eqref{eq:E_f_k_to_E_f_k_Poi}. It follows from Lemma~\ref{lem:asymptotics_I} with $\beta = \alpha m$ that for every fixed $m\in\N$, the quantity $\tilde I_{n,m}(\alpha)$ defined in~\eqref{eq:I_definition_prime} satisfies
\begin{align*}
\tilde I_{n,m}(\alpha)
&=
\int_{-\infty}^{+\infty} \tilde c_{1, \frac {\alpha m + 1}{2}}
(1+t^2)^{-\frac {\alpha m + 1}{2}}
\left(\int_{-\infty}^t \tilde c_{1, \frac{\alpha + 1}{2}} (1+s^2)^{-\frac{\alpha + 1}{2}}\dd s\right)^{n-m} \dd t\\
&\sim
\tilde c_{1, \frac {\alpha m + 1}{2}} \frac {\Gamma(m)}{\alpha} \left(\frac{\alpha}{\tilde c_{1, \frac{\alpha + 1}{2}}}\right)^{m} n^{-m},
\end{align*}
as $n\to\infty$.
By Theorem~\ref{theo:f_vect_prime} and the above asymptotics with $m=d-2s$ for all $s\in \N_0$ with $d-2s\geq k+1$, we arrive at
\begin{align*}
\E  f_k\left(\tilde P_{n,d}^{\frac{d+\alpha}{2}}\right)
&=
2 \sum_{s=0}^\infty \binom n {d-2s} \binom {d-2s}{k+1} \tilde I_{n,d-2s}(\alpha) \tilde J_{d-2s,k+1}\left(\frac{d-2s-1+\alpha}{2}\right)\\
&=
2 \sum_{\substack{m\in \{k+1,\ldots,d\}\\ m\equiv d \Mod{2}}} \binom n {m} \binom {m}{k+1} \tilde I_{n,m}(\alpha) \tilde J_{m,k+1}\left(\frac{m-1+\alpha}{2}\right)\\
&\sim
2 \sum_{\substack{m\in \{k+1,\ldots,d\}\\ m\equiv d \Mod{2}}} \frac{\tilde c_{1, \frac{\alpha m +1}{2}}}{(\tilde c_{1, \frac{\alpha+1}{2}})^{m}} \cdot \frac{\alpha^{m-1}}{m} \cdot \binom {m}{k+1}\tilde J_{m,k+1}\left(\frac{m-1+\alpha}{2}\right),
\end{align*}
as $n\to\infty$.
Note that we restricted the summation to the range  $m\in \{k+1,\ldots,d\}$ because terms with $m\leq k$ vanish.
To complete the proof of the theorem, recall that $\tilde c_{1,\gamma} = \frac{\Gamma (\gamma) }{\sqrt{\pi} \Gamma( \gamma - \frac{1}{2})}$ by~\eqref{eq:def_f_beta_prime}.
\end{proof}

\subsection{Asymptotics for the \texorpdfstring{$f$}{f}-vector of beta polytopes}
In this section we prove Theorem~\ref{theo:f_vector_asympt_beta}. The proof is prepared with the following auxiliary estimate.

\begin{lemma}\label{lem:asympt_beta}
Fix some $\alpha>-1$ and $\beta>-1$. As $n\to\infty$, we have
\begin{align*}
B_n:= \int_{-1}^{+1} (1-t^2)^{\frac{\beta-1}{2}} \left(\int_{-1}^t c_{1,\frac{\alpha-1}{2}} (1-s^2)^{\frac{\alpha-1}{2}} \dd s\right)^{n} \dd t
\sim \frac{n^{-\frac{\beta+1}{\alpha+1}}}{1 + \alpha}
\left(\frac{1+\alpha}{c_{1,\frac{\alpha-1}{2}}}\right)^{\frac{\beta+1}{\alpha+1}}
\Gamma\left(\frac{1 + \beta}{1 + \alpha}\right).
\end{align*}
\end{lemma}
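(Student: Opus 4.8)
The plan is to adapt the argument used for Lemma~\ref{lem:asymptotics_I}: there the asymptotics were driven by the Fr\'echet-type tail of the density at infinity, whereas here the relevant feature is the Weibull-type power-law decay of the density at the edge $t=1$ of its support. Write $F(t)=\int_{-1}^t c_{1,\frac{\alpha-1}2}(1-s^2)^{\frac{\alpha-1}2}\,\dd s$ for the distribution function appearing inside the bracket, so that $B_n=\int_{-1}^{+1}(1-t^2)^{\frac{\beta-1}2}F(t)^n\,\dd t$ and $F(1)=1$. First I would localise near $t=1$: for a fixed $\delta\in(0,1)$ one has $F(t)\le F(1-\delta)=:c<1$ on $[-1,1-\delta]$, so that $\int_{-1}^{1-\delta}(1-t^2)^{\frac{\beta-1}2}F(t)^n\,\dd t\le c^{n}\int_{-1}^{1}(1-t^2)^{\frac{\beta-1}2}\,\dd t=O(c^{n})$, the last integral being finite precisely because $\beta>-1$. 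This error is negligible compared with the claimed order $n^{-(\beta+1)/(\alpha+1)}$.

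On the remaining range I would put $t=1-y$ (so $1-t^2=y(2-y)$) and then rescale $y=n^{-2/(\alpha+1)}v$, which gives
\begin{align*}
B_n&=n^{-\frac{\beta+1}{\alpha+1}}\int_0^{\infty} g_n(v)\,\dd v+O(c^{n}),\\
g_n(v)&=\ind_{\{v<\delta n^{2/(\alpha+1)}\}}\,v^{\frac{\beta-1}2}\bigl(2-n^{-2/(\alpha+1)}v\bigr)^{\frac{\beta-1}2}F\bigl(1-n^{-2/(\alpha+1)}v\bigr)^{n}.
\end{align*}
The elementary input is the edge expansion $1-F(1-y)=c_{1,\frac{\alpha-1}2}\int_0^{y}(z(2-z))^{\frac{\alpha-1}2}\,\dd z\sim K\,y^{\frac{\alpha+1}2}$ as $y\downarrow 0$, with $K=\frac{2^{(\alpha+1)/2}\,c_{1,\frac{\alpha-1}2}}{\alpha+1}$; moreover the ratio $(1-F(1-y))/y^{(\alpha+1)/2}$ is continuous and strictly positive on $(0,\delta]$ with limit $K>0$ at $0$, hence is bounded there between two positive constants. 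Therefore, for each fixed $v>0$, $F(1-n^{-2/(\alpha+1)}v)^{n}=\bigl(1-(1+o(1))K v^{(\alpha+1)/2}n^{-1}\bigr)^{n}\to e^{-Kv^{(\alpha+1)/2}}$, while the remaining factors tend to $2^{(\beta-1)/2}v^{(\beta-1)/2}$, so $g_n(v)\to 2^{(\beta-1)/2}v^{(\beta-1)/2}e^{-Kv^{(\alpha+1)/2}}$ pointwise on $(0,\infty)$.

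What remains is a dominated-convergence justification and the evaluation of the limit integral. For domination one uses that on the range in question $n^{-2/(\alpha+1)}v<\delta$, so the middle factor is bounded by a constant, together with $(1-x)^{n}\le e^{-nx}$ and the lower bound $1-F(1-y)\ge c_1 y^{(\alpha+1)/2}$ to obtain $g_n(v)\le C\,v^{(\beta-1)/2}e^{-c_1 v^{(\alpha+1)/2}}$, which is integrable on $(0,\infty)$ since $(\beta-1)/2>-1$. Dominated convergence and the substitution $r=Kv^{(\alpha+1)/2}$ then give
\begin{equation*}
\int_0^{\infty}g_n(v)\,\dd v\longrightarrow 2^{\frac{\beta-1}2}\int_0^{\infty}v^{\frac{\beta-1}2}e^{-Kv^{(\alpha+1)/2}}\,\dd v=\frac{2^{(\beta+1)/2}}{\alpha+1}\,K^{-\frac{\beta+1}{\alpha+1}}\,\Gamma\!\left(\frac{\beta+1}{\alpha+1}\right),
\end{equation*}
and inserting the value of $K$ makes the powers of $2$ cancel, yielding exactly the asserted equivalence. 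I expect the only genuinely delicate point to be the dominated-convergence step — in particular securing the uniform lower bound $1-F(1-y)\ge c_1 y^{(\alpha+1)/2}$ on $(0,\delta]$ that controls $F^{n}$ — but this is the direct analogue of the tail estimate~\eqref{eq:est_tail111} used in the proof of Lemma~\ref{lem:asymptotics_I}, and in fact simpler here since after the cut at $t=1-\delta$ the factor $2-n^{-2/(\alpha+1)}v$ stays bounded away from $0$.
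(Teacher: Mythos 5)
Your proof is correct, and at its core it is the same Laplace-type argument as the paper's: the substitution $t=1-y$, $y=n^{-2/(\alpha+1)}v$, the edge expansion $1-F(1-y)\sim K y^{(\alpha+1)/2}$ with $K=\frac{2^{(\alpha+1)/2}c_{1,\frac{\alpha-1}{2}}}{\alpha+1}$ (the paper gets this via L'Hospital), the pointwise limit $g_n(v)\to 2^{\frac{\beta-1}{2}}v^{\frac{\beta-1}{2}}e^{-Kv^{(\alpha+1)/2}}$, and dominated convergence secured by the lower bound $1-F(1-y)\ge c_1 y^{(\alpha+1)/2}$ combined with $(1-x)^n\le e^{-nx}$ — exactly the analogue of the bound the paper derives (compare the estimate preceding \eqref{eq:est_dominated2}, and \eqref{eq:est_tail111} in Lemma~\ref{lem:asymptotics_I}). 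The one genuine difference is the decomposition: you cut off at $t=1-\delta$ \emph{before} rescaling, so the whole range $[-1,1-\delta]$ contributes only $F(1-\delta)^n\int_{-1}^{1}(1-t^2)^{\frac{\beta-1}{2}}\dd t=O(c^n)$ with $c<1$ (finite since $\beta>-1$), which is negligible against $n^{-\frac{\beta+1}{\alpha+1}}$. The paper instead substitutes over the full range $u\in(0,2n^{2/(\alpha+1)})$ and must then treat the region near $u=2n^{2/(\alpha+1)}$ (i.e.\ $t$ near $-1$) separately, because the factor $(2-un^{-2/(\alpha+1)})^{\frac{\beta-1}{2}}$ blows up there when $\beta<1$; this is its estimate \eqref{eq:est_dominated2}, proved by forcing $F\le\frac12$ on that strip. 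Your pre-localisation makes that extra step unnecessary and keeps the weight factor uniformly bounded on the surviving range, so the domination argument is a little cleaner, at no loss of generality; the computation of the limiting integral and the resulting constant agree with the paper's.
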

\begin{proof}
Write $C_\alpha:= c_{1,\frac{\alpha-1}{2}}$. Using the change of variables $1-t = u n^{-\frac{2}{\alpha+1}}$, we obtain
$$
B_n = n^{-\frac{\beta+1}{\alpha+1}} \int_0^{2n^{\frac{2}{\alpha+1}}} g_n(u) \dd u,
$$
where $g_n$ is given by
$$
g_n(u) =
n^{\frac{\beta-1}{\alpha+1}} \left(1-\left(1- u n^{-\frac{2}{\alpha+1}}\right)^2\right)^{\frac{\beta-1}{2}} \left(1-\int_{1 - un^{-\frac{2}{\alpha+1}}}^1 C_\alpha (1-s^2)^{\frac{\alpha-1}{2}} \dd s \right)^n.
$$
With the rule of L'Hospital one easily checks that
\begin{equation}\label{eq:asympt_proof_beta}
\int_{1- un^{-\frac{2}{\alpha+1}}}^1 C_\alpha (1-s^2)^{\frac{\alpha-1}{2}} \dd s
\sim
C_\alpha 2^{\frac{\alpha+1}{2}} (\alpha+1)^{-1} (un^{-\frac{2}{\alpha+1}})^{\frac{\alpha+1}{2}}={C_\alpha\over\alpha+1}(2u)^{\alpha+1\over 2}n^{-1}.
\end{equation}
It follows that for every $u>0$ we have
$$
\lim_{n\to\infty} g_n(u) = (2u)^{\frac{\beta-1}{2}} \exp\left\{-{C_\alpha\over\alpha+1} (2u)^{\frac{\alpha+1}{2}} \right\}.
$$
Assuming that the dominated convergence theorem is applicable, we arrive at
\begin{align*}
B_n = n^{-\frac{\beta+1}{\alpha+1}} \int_0^{\infty} g_n(u) \ind_{\big(0,2n^{\frac{2}{\alpha+1}}\big)}(u)\, \dd u
\sim n^{-\frac{\beta+1}{\alpha+1}} \int_0^{\infty}(2u)^{\frac{\beta-1}{2}} \exp\left\{-{C_\alpha\over\alpha+1} (2u)^{\frac{\alpha+1}{2}} \right\}\dd u.
\end{align*}
Evaluation of the integral yields
$$
\int_0^{\infty}(2u)^{\frac{\beta-1}{2}} \exp\left\{-{C_\alpha\over\alpha+1} (2u)^{\frac{\alpha+1}{2}} \right\}\dd u = {1\over\alpha+1}\left({\alpha+1\over C_\alpha}\right)^{\beta+1\over \alpha+1}\Gamma\left({\beta+1\over \alpha+1}\right)
$$
and thus the desired asymptotic formula.

To justify the interchanging of the integral and the limit, it suffices to show that there is a  sufficiently small $\delta>0$ such that
\begin{equation}\label{eq:est_dominated1}
0\leq g_n(u) \leq h(u), \quad  \text{ for all } u\in (0,(2-\delta) n^{\frac 2 {\alpha+1}}),
\end{equation}
where $h(u)$ is integrable, and that
\begin{equation}\label{eq:est_dominated2}
\lim_{n\to\infty} \int_{(2-\delta)n^{\frac 2 {\alpha+1}}}^{2n^{\frac 2 {\alpha+1}}} g_n(u) \dd u = 0.
\end{equation}

Clearly, $g_n(u)\geq 0$. To prove the upper estimate in~\eqref{eq:est_dominated1}, observe first that there is $c_1>0$ such that
$$
n^{\frac{\beta-1}{\alpha+1}}
\left(1-\left(1- u n^{-\frac{2}{\alpha+1}}\right)^2\right)^{\frac{\beta-1}{2}}
=
u^{\frac {\beta-1}2} \left(2 - u n^{-\frac{2}{\alpha+1}}\right)^{\frac{\beta-1}{2}}
\leq
c_1 u^{\frac{\beta-1}{2}}
$$
for all $u\in (0,(2-\delta) n^{\frac 2 {\alpha+1}})$.
Namely, we can take $c_1 = 2^{\frac{\beta-1}{2}}$ if $\beta\geq 1$ and $c_1 =\delta^{\frac{\beta-1}{2}}$ if $\beta \leq 1$.
Further, there exists a constant $c_2>0$  such that
$$
\int_{1 - un^{-\frac{2}{\alpha+1}}}^1 C_\alpha (1-s^2)^{\frac{\alpha-1}{2}} \dd s
\geq
c_2\Big(1-(1-un^{-{2\over \alpha+1}})\Big)^{\alpha+1\over 2}
=
c_2 u^{\frac{\alpha+1}{2}} n^{-1},
$$
for all $u\in (0,2 n^{\frac 2 {\alpha+1}}]$.  Indeed, the quotient of both expressions converges to a non-zero constant as $un^{-\frac{2}{\alpha+1}}\to 0$; see Relation~\eqref{eq:asympt_proof_beta}. Furthermore, both expressions are continuous, non-vanishing functions of the argument $un^{-\frac{2}{\alpha+1}} \in (0,2]$. This implies the required bound.   A similar bound was also used in~\cite[Lemma 2.2]{BonnetEtAlThresholds}. Now, if $u\in (0,(2-\delta) n^{\frac 2 {\alpha+1}})$, then taking the above estimates together and using the elementary inequality $(1-x)^n\leq \eee^{-nx}$, $0\leq x<1$, we arrive at
$$
g_n(u)
\leq
c_1 u^{\frac{\beta-1}{2}} \exp\left\{- n\int_{1 - un^{-\frac{2}{\alpha+1}}}^1 C_\alpha (1-s^2)^{\frac{\alpha-1}{2}} \dd s \right\}
\leq
c_1 u^{\frac{\beta-1}{2}}\exp\{-c_2 u^{\frac{\alpha+1}{2}}\},
$$
which proves the integrable bound stated in~\eqref{eq:est_dominated1} for every $\delta\in (0,2)$.

Let us prove~\eqref{eq:est_dominated2}. First of all, we have
$$
n^{\frac{\beta-1}{\alpha+1}}
\left(1-\left(1- u n^{-\frac{2}{\alpha+1}}\right)^2\right)^{\frac{\beta-1}{2}}
=
u^{\frac {\beta-1}2} \left(2 - u n^{-\frac{2}{\alpha+1}}\right)^{\frac{\beta-1}{2}}.
$$
Unfortunately, this becomes infinite at $un^{-\frac{2}{\alpha+1}} =2$ if $\beta<1$.  Let us choose $\delta>0$ so small that for all  $u\in ((2-\delta) n^{\frac 2 {\alpha+1}}, 2n^{\frac 2 {\alpha+1}})$,
$$
1-\int_{1 - un^{-\frac{2}{\alpha+1}}}^1 C_\alpha (1-s^2)^{\frac{\alpha-1}{2}} \dd s
=
\int_{-1}^{-1+ (2 - un^{-\frac{2}{\alpha+1}})} C_\alpha (1-s^2)^{\frac{\alpha-1}{2}} \dd s
\leq \frac 12.
$$
This is possible because the integral converges to $0$ as $(2 - un^{-\frac{2}{\alpha+1}})\to 0$.  Recalling the definition of $g_n$ and taking the above estimates together, we arrive at
$$
\int_{(2-\delta)n^{\frac 2 {\alpha+1}}}^{2n^{\frac 2 {\alpha+1}}} g_n(u) \dd u
\leq
\int_{(2-\delta)n^{\frac 2 {\alpha+1}}}^{2n^{\frac 2 {\alpha+1}}} u^{\frac{\beta-1}{2}}  \left(2 - u n^{-\frac{2}{\alpha+1}}\right)^{\frac{\beta-1}{2}} 2^{-n} \dd u
=
n^{\frac{\beta+1}{\alpha+1}} 2^{-n} \int_{2-\delta}^2 v^{\frac{\beta-1}{2}} (2-v)^{\frac{\beta-1}{2}} \dd v,
$$
which converges to $0$, as $n\to\infty$. This completes the proof of~\eqref{eq:est_dominated2}.
\end{proof}

\begin{proof}[Proof of Theorem~\ref{theo:f_vector_asympt_beta}]
It follows from Lemma~\ref{lem:asympt_beta} with $\beta=\alpha k$ that
\begin{align*}
I_{n,k}(\alpha)
&=
\int_{-1}^{+1}  c_{1, \frac {\alpha k -1}{2}}
(1-t^2)^{\frac {\alpha k - 1}{2}} \left(\int_{-1}^t c_{1, \frac{\alpha - 1}{2}} (1-s^2)^{\frac{\alpha - 1}{2}}\dd s\right)^{n-k} \dd t\\
&\sim
n^{-\frac{\alpha k + 1}{\alpha+1}}
\frac{c_{1, \frac {\alpha k -1}{2}}}{1 + \alpha}
\left(\frac{1+\alpha}{c_{1,\frac{\alpha-1}{2}}}\right)^{\frac{\alpha k + 1}{\alpha+1}}
\Gamma\left(\frac{1 + \alpha k}{1 + \alpha}\right).
\end{align*}
From Theorem~\ref{theo:f_vect} we recall the formula
$$
\E f_k(P_{n,d}^{\beta})
=
2 \sum_{s=0}^\infty \binom n {d-2s} \binom {d-2s}{k+1} I_{n,d-2s}(2\beta+d) J_{d-2s,k+1}\left(\beta + s + \frac 12\right).
$$
It follows from the above that the $s$-th term of the sum behaves like a constant multiple of $n^{d-2s-1\over 2\beta+d+1}$, as $n\to\infty$. Consequently, as $n\to\infty$, the term with $s=0$ dominates all other terms and we arrive at
\begin{align*}
\E f_k(P_{n,d}^{\beta}) &\sim
n^{\frac{d - 1}{2\beta+d+1}}\frac{2}{d!} \binom {d}{k+1}  J_{d,k+1}\left(\beta + \frac 12\right)
\frac{c_{1, \frac {(2\beta+d) d -1}{2}}}{2\beta+d+1}\\
&\qquad\qquad\qquad\times\left(\frac{2\beta+d+1}{c_{1,\frac{2\beta+d -1}{2}}}\right)^{\frac{(2\beta+d)  d + 1}{2\beta+d +1}}
\Gamma\left(\frac{(2\beta+d)  d+1}{2\beta+d+1}\right).
\end{align*}
This completes the proof.
\end{proof}

\subsection{Poisson hyperplane tessellations}

Recall the definitions of the Poisson point process $\Pi_{d,\alpha}$ and the zero cell $Z_{d,\alpha}=Z_{d,\alpha,1}$ given in Sections~\ref{subsec:PPP} and~\ref{subsec:PHT}.

\begin{proof}[Proof of Theorem \ref{thm:PoissonHyperplanes}]
Let us define the (measurable) mapping
$$
T:\R^d\setminus\{0\}\to A(d,d-1),\qquad
x\mapsto H(x):=\{y\in\R^d:\langle x,y\rangle=1\}.
$$
The well-known mapping property of Poisson processes (see, for example, \cite[Theorem 5.1]{LastPenrosePPPBook}) implies that the image process $T\Pi_{d,\alpha}$ is a Poisson process on the space $A(d,d-1)$. Its probability law is rotationally invariant, since $\Pi_{d,\alpha}$ has the same property. Next, we consider the distance distribution. For $s>0$ we first compute, by transformation into spherical coordinates, that
\begin{align}\label{eq:DistanceDistributionComp1}
\int_{\{x\in\R^d:\|x\|>s\}}{\dint x\over\|x\|^{d+\alpha}} = \omega_d\int_s^\infty {\dint r\over r^{\alpha+1}} = \omega_d{s^{-\alpha}\over \alpha}.
\end{align}
On the other hand, writing $d(0,H)$ for the distance of a hyperplane $H\in A(d,d-1)$ to the origin, we have that $|\{H\in\eta_{d,\alpha,\gamma}:d(0,H)\leq s\}|$ ($|\,\cdot\,|$ denotes the cardinality of a set) is Poisson distributed with mean
\begin{align*}
\Theta_{d,\alpha,\gamma}(\{H\in A(d,d-1):d(0,H)\leq s\})
=
2\gamma\int_{0}^s|t|^{\alpha-1}\,\dint t
={2\gamma s^\alpha\over\alpha},
\end{align*}
where we used the definition of $\Theta_{d,\alpha,\gamma}$ given in~\eqref{eq:Theta_def}.
Thus,
\begin{align}\label{eq:DistanceDistributionComp2}
\E|\{H\in\eta_{d,\alpha,\gamma}:d(0,H)^{-1}\geq s\}| = \E|\{H\in\eta_{d,\alpha,\gamma}:d(0,H)\leq s^{-1}\}| = 2\gamma\,{s^{-\alpha}\over\alpha}.
\end{align}
So, a comparison of \eqref{eq:DistanceDistributionComp1} with \eqref{eq:DistanceDistributionComp2} shows that the Poisson processes $T\Pi_{d,\alpha}$ and $\eta_{d,\alpha,\gamma}$ with $\gamma=\omega_d/2$ have the same distribution. In view of the definition of the mapping $T$ and the definition of the dual of a convex body this implies that the random polytopes $(\conv\Pi_{d,\alpha})^\circ$ and $Z_{d,\alpha,\omega_d/2}$ (or, equivalently, $\conv\Pi_{d,\alpha}$ and $Z_{d,\alpha,\omega_d/2}^\circ$) are identically distributed. The claim for the expected $f$-vectors follows from~\eqref{eq:DualityFvector}, the fact that a change in the parameter $\gamma$ is equivalent to a rescaling of the zero cell and that the $f$-vector is invariant under such scalings.
\end{proof}

\begin{proof}[Proof of Theorem \ref{thm:PoissonHyperplanesDtoInfinity}]
Fix some $k\in \{0,1,2,\ldots\}$.
Theorem \ref{thm:PoissonHyperplanes} and Theorem \ref{theo:f_vect_poisson} imply that
\begin{align*}
 \E f_k(Z_{d,\alpha})
&= \E f_{d-k-1}(\conv\Pi_{d,\alpha})\\
&= 2\sum_{s=0}^{\lfloor{\frac k2}\rfloor}{\Gamma({(d-2s)\alpha+1\over 2})\over\Gamma({(d-2s)\alpha\over 2})}\left({\Gamma({\alpha\over 2})\over\Gamma({\alpha+1\over 2})}\right)^{d-2s}{(\sqrt{\pi}\alpha)^{d-2s-1}\over d-2s}{d-2s\choose d-k}\tilde{J}_{d-2s,d-k-1}\left({d-2s-1+\alpha\over 2}\right)\\
&=: \sum_{s=0}^{\lfloor{\frac k2}\rfloor} T_{d,\alpha}(s).
\end{align*}
Recall that we assume that $\alpha=\alpha(d)>0$ is bounded away from $0$.  For fixed $s\in \{0,1,\ldots,\lfloor \frac k2 \rfloor\}$, Stirling's formula and the definition of the binomial coefficients yield
$$
{\Gamma({(d-2s)\alpha+1\over 2})\over\Gamma({(d-2s)\alpha\over 2})}\sim \sqrt{(d-2s)\alpha\over 2}
\qquad\text{and}\qquad
{d-2s\choose d-k}={d-2s\choose k-2s} \sim {d^{k-2s}\over(k-2s)!},
$$
as $d\to\infty$.
Moreover, we shall argue below that
\begin{equation}\label{eq:JTildeAsymptotic}
\lim_{d\to\infty} \tilde{J}_{d-2s,d-k-1}\left({d-2s-1+\alpha\over 2}\right)  = {1\over 2^{k-2s}}.
\end{equation}
This implies that the $s$th term in the above sum (together with the prefactor $2$) is equal to
\begin{align*}
T_{d,\alpha}(s)
&\sim 2\sqrt{(d-2s)\alpha\over 2}\left({\Gamma({\alpha\over 2})\over\Gamma({\alpha+1\over 2})}\right)^{d-2s}{(\sqrt{\pi}\,\alpha)^{d-2s-1}\over d-2s}{d^{k-2s}\over(k-2s)!}{1\over 2^{k-2s}}\\
&\sim {\sqrt{\alpha}\over 2^{k-2s-{1\over 2}}}\left({\Gamma({\alpha\over 2})\over\Gamma({\alpha+1\over 2})}\right)^{d-2s}{(\sqrt{\pi}\,\alpha)^{d-2s-1}\over (k-2s)!}d^{k - 2s-{1\over 2}},
\end{align*}
as $d\to\infty$. Next we show that the term $T_{d,\alpha}(s)$ with $s=0$ is asymptotically dominating the terms with $s\neq 0$. For every $s\in \{0,1,\ldots,\lfloor \frac k2 \rfloor\}$, we have
$$
\frac{T_{d,\alpha}(0)}{T_{d,\alpha}(s)} \sim  \left({\sqrt \pi \alpha d\over 2}  \,  {\Gamma({\alpha\over 2})\over\Gamma({\alpha+1\over 2})}\right)^{2s}  \frac{(k-2s)!} {k!}.
$$
If $s\neq 0$, then the right-hand side goes to $+\infty$, as $d\to\infty$, since the function $\Gamma({\alpha\over 2}+1)/\Gamma({\alpha+1\over 2})$ is bounded away from $0$ for $\alpha>0$.
Thus, $T_{d,\alpha} (s) = o(T_{d,\alpha} (s))$ for every $s\in \{1,2,\ldots, \lfloor \frac k2 \rfloor\}$ and hence
$$
\E f_k(Z_{d,\alpha}) \sim T_{d,\alpha}(0) \sim {\sqrt{\alpha}\over 2^{k-{1\over 2}}}\left({\Gamma({\alpha\over 2})\over\Gamma({\alpha+1\over 2})}\right)^{d}{(\sqrt{\pi}\,\alpha)^{d-1}\over k!}d^{k-{1\over 2}}.
$$
It remains to prove \eqref{eq:JTildeAsymptotic}. For $\ell,m\in\N$ consider the quantity
$$
\tilde{J}_{m,\ell-1}(\beta)=\E\beta([Z_1,\ldots,Z_\ell],[Z_{1},\ldots,Z_m]),
$$
where the points $Z_1,\ldots,Z_m\in\R^{m-1}$ are i.i.d.\ with beta' density $\tilde{f}_{m-1,\beta}$. In the proof of Theorem \ref{theo:f_vect} we have seen that $\beta([Z_1,\ldots,Z_\ell],[Z_{1},\ldots,Z_m])$ has the same law as $\beta(\pos(V_1-V,\ldots,V_{m-\ell}-V))$, where
\begin{itemize}
\item[(a)] $V,V_1,\ldots,V_{m-\ell}\in\R^{m-\ell}$ are independent and such that
\item[(b)] $V_1,\ldots,V_{m-\ell}$ have density $\tilde{f}_{m-\ell,{2\beta-\ell+1\over 2}}$ and
\item[(c)] $V$ has density $\tilde{f}_{m-\ell,{(2\beta-m)\ell+m\over 2}}$.
\end{itemize}
Now, we substitute $m=d-2s$, $\ell=d-k$ and $\beta={d-2s-1+\alpha\over 2}$ and notice that the relevant beta'-parameters are
$$
m-\ell\quad\text{and}\quad\kappa(d) := {2\beta-\ell+1\over 2}={\alpha+k-2s\over 2}
$$
for the random variables $V_1,\ldots,V_{m-\ell}$ in (b) and
$$
m-\ell\quad \text{and}\quad
\eta(d) := {(2\beta-m)\ell+m\over 2}={\alpha(d-k)+k-2s\over 2}
$$
for the random variable $V$ in (c).
We are interested in the large $d$ behavior of
$$
\tilde{J}_{d-2s,d-k-1}\left({d-2s-1+\alpha\over 2}\right) =
\E \beta \pos(V_1(d)-V(d),\ldots, V_{k-2s}(d)-V(d)),
$$
where  $V_1(d),\ldots,V_{k-2s}(d)$ with density $\tilde{f}_{k-2s,\kappa(d)}$ and $V(d)$ with density $\tilde{f}_{k-2s,\eta(d)}$ are independent.

\vspace*{2mm}
\noindent
\textit{Case 1.}
Assume first that $\kappa(d)$ converges to some finite $\kappa \in (\frac{k-2s}{2}, \infty)$, as $d\to\infty$. Note that the value $\frac{k-2s}{2}$ can be excluded by the that assumption $\inf_{d\in\N} \alpha(d) > 0$. For the same reason,  we have $\eta(d)\to \infty$, as $d\to\infty$.  Observe that the beta' distribution with a second parameter going to $\infty$ weakly converges to the Dirac measure at $0$. It follows that, as $d\to\infty$, the collection of random points $(V_1(d),\ldots,V_{k-2s}(d),V)$ converges in distribution to $(W_1,\ldots,W_{k-2s},0)$, where $W_1,\ldots,W_{k-2s}$ are i.i.d.\ with density $\tilde{f}_{k-2s,\kappa}$. Consequently, we have
\begin{align*}
\lim_{d\to\infty} \tilde{J}_{d-2s,d-k-1}\left({d-2s-1+\alpha\over 2}\right)
&=
\lim_{d\to\infty} \E \beta \pos(V_1(d)-V(d),\ldots, V_{k-2s}(d)-V(d))\\
&=
\E \beta \pos(W_1,\ldots, W_{k-2s}).
\end{align*}
However, since the distribution of $W_i$ is the same as that of $-W_i$ for every $i=1,\ldots,k-2s$, we must have
$$
\E\beta(\pos(W_1,\ldots,W_{k-2s})) = {1\over 2^{k-2s}},
$$
for symmetry reasons. This proves~\eqref{eq:JTildeAsymptotic} in Case~1.

\vspace*{2mm}
\noindent
\textit{Case 2.}
Assume now that $\kappa(d)$ diverges to $+\infty$, as $d\to\infty$. By the definition of $\kappa(d)$ and $\eta(d)$ we have $\eta(d) \to\infty$ and moreover $\kappa(d) = o(\eta(d))$, as $d\to\infty$. By Lemma~\ref{lem:gauss_limit}, the random points
$$
\sqrt{2 \kappa(d)} V_1(d), \ldots, \sqrt{2 \kappa(d)} V_{k-2s}(d), \sqrt{2 \eta(d)} V(d)
$$
thus converge in distribution to independent random points $W_1,\ldots, W_{k-2s}, W$ with standard normal distribution on $\R^{k-2s}$.
Combining this with $\kappa(d) = o(\eta(d))$, we obtain the convergence
$$
\sqrt{2 \kappa(d)} \, (V_1(d), \ldots, V_{k-2s}(d),  V(d)) \overset{d}{\longrightarrow}  (W_1,\ldots,W_{k-2s}, 0),
$$
as $d\to\infty$. Since the standard normal distribution is centrally symmetric with respect to the origin, the same symmetry argument as in Case~1 proves the validity of~\eqref{eq:JTildeAsymptotic}, that is
$$
\lim_{d\to\infty} \tilde{J}_{d-2s,d-k-1}\left({d-2s-1+\alpha\over 2}\right)  = {1\over 2^{k-2s}}.
$$

\vspace*{2mm}
\noindent
\textit{Case 3.} In general, $\kappa(d)$ need not converge to any finite or infinite limit. However, any subsequence of $\kappa(d)$ has a subsubsequence to which either Case~1 or Case~2 can be applied, thus showing that~\eqref{eq:JTildeAsymptotic} holds without additional assumptions.  This completes the proof of Theorem~\ref{thm:PoissonHyperplanesDtoInfinity}.
\end{proof}

\section*{Acknowledgement}
We are grateful to an anonymous referee for his/her report. The comments and suggestions were very helpful for us to improve the style and the presentation of the paper. ZK and CT were supported by the DFG Scientific Network {\it Cumulants, Concentration and Superconcentration}. ZK was supported by the German Research Foundation under Germany's Excellence Strategy  EXC 2044 -- 390685587, Mathematics M\"unster: Dynamics - Geometry - Structure.

\addcontentsline{toc}{section}{References}
\bibliography{beta_poly_bib}
\bibliographystyle{plainnat}

\vspace{1cm}

\footnotesize

\textsc{Zakhar Kabluchko:} Institut f\"ur Mathematische Stochastik, Westf\"alische Wilhelms-Universit\"at M\"unster\\
\textit{E-mail}: \texttt{zakhar.kabluchko@uni-muenster.de}

\bigskip

\textsc{Christoph Th\"ale:} Fakult\"at f\"ur Mathematik, Ruhr-Universit\"at Bochum\\
\textit{E-mail}: \texttt{christoph.thaele@rub.de}

\bigskip

\textsc{Dmitry Zaporozhets:}  St.\ Petersburg Department of Steklov Mathematical Institute\\
\textit{E-mail}: \texttt{zap1979@gmail.com}

\end{document}